\documentclass[a4paper, 12pt]{amsart}

\raggedbottom

\usepackage[latin1]{inputenc}
\usepackage{lmodern}
\usepackage{mathrsfs}
\usepackage[english]{babel}
\usepackage{graphicx}
\usepackage{color}
\usepackage{tikz-cd}
\usepackage{enumerate} 
\usepackage{microtype}

\theoremstyle{plain}
\newtheorem{theorem}{Theorem}[section]
\newtheorem{lemma}[theorem]{Lemma}
\newtheorem{proposition}[theorem]{Proposition}

\theoremstyle{definition}
\newtheorem{definition}[theorem]{Definition}
\newtheorem{remark}[theorem]{Remark}
\newtheorem{example}[theorem]{Example}
\newtheorem*{inputdata}{Input}
\newtheorem*{outputdata}{Output}


\newcommand{\ZZ}{\mathbb{Z}} 
\newcommand{\iso}{\cong}     
\newcommand{\VV}{\mathbb{V}} 
\newcommand{\PP}{\mathbb{P}} 
\renewcommand{\AA}{\mathbb{A}} 
\newcommand{\sheaf}[1]{\mathscr{#1}} 
\newcommand{\OO}{\sheaf{O}}  
\newcommand{\tprod}{{\textstyle\prod}}  
\newcommand{\res}[2]{\left.#1\right|_{#2}} 
\newcommand{\Gm}{\mathbb{G}_m} 
\newcommand{\union}{\cup}
\newcommand{\Union}{\bigcup}
\newcommand{\intsct}{\cap}

\newcommand{\tensor}{\otimes}
\newcommand{\Tensor}{\bigotimes}

\DeclareMathOperator{\pr}{pr}
\DeclareMathOperator{\Pic}{Pic}
\DeclareMathOperator{\SL}{SL}

\DeclareMathOperator{\Spec}{Spec}
\DeclareMathOperator{\Proj}{Proj}
\DeclareMathOperator{\Hilb}{Hilb}
\newcommand{\into}{\hookrightarrow}

\title[Degenerations of Hilbert schemes]{A GIT construction of degenerations of Hilbert schemes of points}

\begin{document}

\author{Martin G. Gulbrandsen}
\address{University of Stavanger\\
Department of Mathematics and Natural Sciences\\
4036 Stavanger\\
Norway}
\email{martin.gulbrandsen@uis.no}

\author{Lars H. Halle}
\address{University of Copenhagen\\ 
Department of Mathematical Sciences\\
Universitetsparken 5\\ 
2100 Copenhagen\\ 
Denmark}
\email{larshhal@math.ku.dk}

\author{Klaus Hulek}
\address{Leibniz Universit\"at Hannover\\
Institut f\"ur algebraische Geometrie\\
Welfengarten 1\\
30060 Hannover\\
Germany}
\email{hulek@math.uni-hannover.de}

\begin{abstract}
We present a Geometric Invariant Theory (GIT) construction which allows us to
construct good projective degenerations of Hilbert schemes of points for simple
degenerations. A comparison with the construction of Li and Wu shows that our
GIT stack and the stack they construct are isomorphic, as are the associated
coarse moduli schemes. Our construction is sufficiently explicit to obtain good
control over the geometry of the singular fibres. We illustrate this by giving a
concrete description of degenerations of  degree $n$ Hilbert schemes of a simple
degeneration with two components. 
\end{abstract}

\maketitle


Constructing and understanding  degenerations of moduli spaces is a crucial
problem in algebraic geometry, as well as a vitally important technique, going
back to the classical German and  Italian schools where it was used for solving
enumerative problems. New techniques for studying degenerations were introduced
by Li and Li--Wu respectively. Their approach is based on the technique of {\em
expanded degenerations}, which first appeared in \cite{li-2001}. This method is
very general and can be used to study degenerations of various types of moduli
problems, including Hilbert schemes and moduli spaces of sheaves. In
\cite{LW-2011} Li and Wu used degenerations of Quot-schemes and coherent
systems to obtain degeneration formulae for Donaldson--Thomas invariants  and
Pandharipande--Thomas stable pairs. The reader can find a good introduction to
these techniques in Li's article \cite{li-2013}.

The motivation for our work was a concrete geometric question: we wanted to
understand degenerations of  irreducible holomorphic symplectic manifolds.
Clearly, a starting point for this is to study de\-gen\-erations of $K3$
surfaces and their Hilbert schemes. Our guiding example were type II
degenerations of $K3$ surfaces, but we  were soon led to investigate the
degeneration of {\em Hilbert schemes of points} for simple degenerations $X \to
C$ where  we make no a priori restriction on the type of the fibre nor its
dimension. A simple degeneration means in particular that the total space is
smooth and that the central fibre $X_0$ over the point $0 \in C$ of the
$1$-dimensional base $C$ has normal crossing along smooth varieties.  The aim
of this paper is to develop the technique for the {\em construction} of
degenerations of Hilbert schemes which give us not only abstract existence
results, but also allow us to control the geometry of the degenerate fibres. In
the  forthcoming paper \cite{GHHZ-2018} we will then investigate the {\em
properties} of these degenerations.  

At this point we would like to explain the common ground, but also the
differences of our approach to that of Li and  Wu. First of all we only
consider Hilbert schemes of points, whereas Li and Wu consider more generally
Hilbert schemes of ideal sheaves with arbitrary Hilbert polynomial, and even
Quot schemes. We have not investigated in how far our techniques can be
extended to non-constant Hilbert polynomials. This might indeed be a question
well worth pursuing, but one which would go far beyond the scope of this paper.
The common ground with the approach of Li and Wu is that we also use Li's
method of expanded degenerations $X[n] \to C[n]$. In the case of constant
Hilbert polynomial the relevance of this construction is the following:
ideally, one wants to construct a family whose special fibre over $0$
parametrizes length $n$ subschemes of the degenerate fibre $X_0$.  Clearly, the
difficult question is how to describe subschemes whose support meets the
singular locus of $X_0$. The main idea of the construction of expanded
degenerations $X[n] \to C[n]$ is that, whenever a subscheme approaches a
singularity in $X_0$, a new ruled component is inserted into $X_0$ and thus it
will be sufficient to work with subschemes supported on the smooth loci of the
fibres of  $X[n] \to C[n]$. The price one pays for this is that the dimension
of the base $C[n]$ is increased at each step of increasing $n$, and finally one
has to take equivalence classes of subschemes supported on the fibres of  $X[n]
\to C[n]$. Indeed, the construction of expanded degenerations also includes the
action of an $n$-dimensional torus $G[n]$ which acts on $X[n] \to C[n]$ such
that $C[n]/\!/{G[n]}=C$.

The way Li and Wu then proceed is by constructing the {\em stack}
$\mathfrak{X}/\mathfrak{C}$ {\em of expanded degenerations} associated to $X\to
C$, which is done by introducing a suitable notion of equivalence on  expanded
degenerations. For fixed Hilbert polynomial $P$ they then introduce the notion
of {\em stable} ideal sheaves with Hilbert polynomial $P$, and use this to
define a stack $\mathcal{I}^P_{\mathfrak{X}/\mathfrak{C}}$ over $C$
parametrizing such stable ideal sheaves. In the case of constant Hilbert
polynomial $P=n$ this leads to subschemes of length $n$ supported on the smooth
locus of a fibre of an expanded degeneration, and having finite automorphism
group. We call the  stack $\mathcal{I}^n_{\mathfrak{X}/\mathfrak{C}}$ the {\em
Li--Wu stack}. For details see \cite{LW-2011} and, for a survey, also
\cite{li-2013}. In contrast to this approach our method does not use the Li--Wu
stack, but is based on a Geometric Invariant Theory approach (GIT,
\cite{GIT}), which we will now outline.

\subsubsection*{Acknowledgements}

The first author would like to thank the Research Council of Norway (NFR) for
partial support under grant 230986. The third author is grateful to Deutsche
Forschungsgemeinschaft (DFG) for partial support under grant Hu 337/7-1.

\subsection{The main results}\label{intro:mainresults}

The main technical achievement of the paper is to construct a suitable set-up
which allows us to apply GIT methods. To perform this we must make one
assumption on the dual graph $\Gamma(X_0)$ associated to the singular fibre
$X_0$, namely that it is \emph{bipartite}, or equivalently it has no cycles of
odd length. This is not a crucial restriction as we can always perform a
quadratic base change to get into this situation. We first construct a
relatively ample line bundle $\sheaf L$ on $X[n] \to C[n]$. The bipartite
assumption allows us to construct a particular $G[n]$-linearization on $ \sheaf
L$ which will then turn out to be well adapted for our applications to Hilbert
schemes. The definition of the correct $G[n]$-linearization is the most
important technical tool of this paper. Using $\sheaf L$ we can construct an
ample line bundle ${\sheaf M}_{\ell}$  on the relative Hilbert scheme
$\mathbf{H}^n := \Hilb^n(X[n]/C[n])$, which comes equipped with a natural
$G[n]$-linearization. (The integer $\ell \gg 0$ only plays an auxiliary role.)
This construction is sufficiently explicit to allow us to analyse GIT
stability, using a relative version of the Hilbert--Mumford numerical criterion
(see \cite[Cor.~1.1]{GHH-2015}). In particular, we prove that (semi-)stability
of a point $[Z] \subset \mathbf{H}^n$ only depends on the degree $n$ cycle
associated to $Z$ (and not on its scheme structure). 

After having fixed the $G[n]$-linearized sheaf $\sheaf L$, our construction
depends a priori on several choices. One choice is the orientation of the dual
graph $\Gamma(X_0)$. As we work with a bipartite graph, it admits exactly two
bipartite orientations and we will show that these lead to isomorphic GIT
quotients. We moreover need to select a suitable $\ell$ in the construction of
$\sheaf M_{\ell}$. Our characterization of stable $n$ cycles will a posteriori
show  that the final result is independent also of this choice. 

This characterization is indeed crucial and in order to formulate this theorem,
we first need some notation. Let $[Z] \in \mathbf{H}^n$ be represented by a
subscheme $Z \subset X[n]_q$ for some point $q \in C[n]$. Using a local \'etale
coordinate $t$ on $C$ we obtain coordinates $t_i, i \in \{1, \ldots n+1 \}$ on
$C[n]$ and we define $\{a_1, \ldots , a_r\} $ to be the subset indexing
coordinates with $t_i(q)=0$. Setting $a_0=1$ and $a_{r+1}=n+1$ we obtain a
vector ${\mathbf a}=(a_0, \ldots, a_{r+1}) \in \ZZ^{r+2}$, which, in turn,
determines a vector $\mathbf{v}_{\mathbf{a}} \in \ZZ^{r+1}$ whose $i$-th
component is $ a_i - a_{i-1} $. 

We say that $Z$ has {\em smooth support} if $Z$ is supported in the smooth part
of the fibre $X[n]_q$. Then each point $P_i$ in the support of $Z$ is contained
in a unique component of  $X[n]_q$ with some multiplicity $n_i$. This allows us
to  define the {\em numerical support}  ${\mathbf v}(Z) \in \ZZ^{r+1}$, see
Definition \ref{def:numericalsupport}. Our characterization then reads as
follows:
\begin{theorem}\label{the:intro-stablelocus}
	Let $\ell \gg 2n^2$. The (semi-)stable locus in $ \mathbf{H}^n $ with respect
	to $\sheaf{M}_{\ell}$ can be described as follows:
	\begin{enumerate}
		\item
			If $[Z] \in \mathbf{H}^n$ has smooth support, then $[Z] \in
			\mathbf{H}^n(\sheaf{M}_{\ell})^{ss}$ if and only if
			\begin{equation*}
				\mathbf{v}(Z) = \mathbf{v}_{\mathbf{a}}.
			\end{equation*}
			In this case, it also holds that $[Z] \in
			\mathbf{H}^n(\sheaf{M}_{\ell})^{s} $.
		\item
			If $[Z] \in \mathbf{H}^n$ does not have smooth support, then $[Z]
			\notin \mathbf{H}^n(\sheaf{M}_{\ell})^{ss}$.
	\end{enumerate}
\end{theorem}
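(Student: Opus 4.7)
The strategy is to apply the relative Hilbert--Mumford numerical criterion [GHH-2015, Cor.~1.1] to the $G[n]$-linearized line bundle $\sheaf{M}_\ell$: a point $[Z]$ lying over $q \in C[n]$ is (semi-)stable precisely when the Hilbert--Mumford weight $\mu^{\sheaf{M}_\ell}([Z],\lambda)$ is positive (respectively non-negative) for every non-trivial $1$-parameter subgroup $\lambda$ of the stabilizer $G[n]_q$. This reduces everything to a weight computation, which I would organize around the combinatorial data $\mathbf{a}$.

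First I would identify $G[n]_q$: because $t_i(q)=0$ iff $i \in \{a_1,\ldots,a_r\}$, the stabilizer is precisely the subtorus of $G[n]$ acting on these coordinates, with character lattice $\ZZ^r$, so it suffices to test $1$-PS $\lambda = (\lambda_1,\ldots,\lambda_r)$. Next I would make the weight on $\sheaf{M}_\ell$ explicit. By construction $\sheaf{M}_\ell$ arises from the $G[n]$-linearized $\sheaf L$ on $X[n]$ through a Grothendieck embedding of $\Hilb^n$ into a Grassmannian, so $\mu^{\sheaf{M}_\ell}([Z],\lambda)$ decomposes (for $\ell \gg 0$) as a leading term linear in $\ell$, given by a sum over the support of $Z$ of the $\lambda$-weights of $\sheaf L$ on the components of $X[n]_q$, plus correction terms of order $O(n^2)$. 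This is where the hypothesis $\ell \gg 2n^2$ enters: the leading term dominates, so $\mu$ has the sign of its leading part.

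The bipartite $G[n]$-linearization was designed precisely so that the $\lambda$-weight of $\sheaf L$ on the $i$-th component of $X[n]_q$ is an explicit piecewise-linear function $w_i(\lambda)$. Summing with multiplicities, the leading term becomes $\sum_{i=0}^{r} v_i \, w_i(\lambda)$, where $\mathbf{v}(Z)=(v_0,\ldots,v_r)$ is the numerical support. Under the smooth support assumption this is an honest linear functional on the $1$-PS lattice, and solving the linear program ``$\sum v_i w_i(\lambda)\geq 0$ for all $\lambda$ with $\sum v_i = n$'' should force $\mathbf{v}(Z)=\mathbf{v}_{\mathbf{a}}$; conversely, when $\mathbf{v}(Z)=\mathbf{v}_{\mathbf{a}}$, the contributions from the two sides of each $a_j$ cancel for every $\lambda$, giving $\mu = 0$ in the leading term, and the subleading contribution (still controlled because $\ell \gg 2n^2$) gives strict positivity for every non-trivial $\lambda$, yielding stability and not merely semistability.

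The main obstacle is part (2): handling non-smooth support. Here the vector $\mathbf{v}(Z)$ is not defined, and a length-$n$ subscheme can carry embedded structure along the singular locus of $X[n]_q$. The approach I would take is to form the flat limit of $Z$ under a carefully chosen $1$-PS $\lambda$: this limit splits the contribution at a singular point of $X[n]_q$ into pieces on the smooth loci of the two adjacent components, and one compares the weights of $\sheaf L$ to show that the contribution of any length supported on the singular locus is strictly smaller than what a balanced smooth-support configuration would provide. This produces a destabilizing $\lambda$. The delicate point is that non-reduced or embedded structures generate extra terms in the weight; controlling these uniformly, once more via $\ell \gg 2n^2$ to absorb them into the leading order estimate, is where the essential technical work lies.
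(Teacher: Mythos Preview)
Your overall architecture matches the paper's: split $\mu^{\sheaf{M}_\ell}$ into a combinatorial part scaling with $\ell$ (depending only on the cycle underlying $Z$) and a bounded remainder of size $O(n^2)$, then let $\ell \gg 2n^2$ force the combinatorial part to dominate. But your account of the ``if'' direction in (1) contains a genuine error. You assert that when $\mathbf{v}(Z)=\mathbf{v}_{\mathbf{a}}$ the leading term \emph{vanishes} and strict positivity is supplied by the subleading term. This cannot work: if the $\ell$-linear part were zero then $\mu^{\sheaf{M}_\ell}$ would equal the bounded remainder, whose sign you have no control over, and no choice of $\ell$ would help. The actual mechanism is the opposite. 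The specific (and non-obvious) linearization of $\sheaf{L}$ in Lemma~\ref{Lemma-modlin} is engineered so that the combinatorial weight $\omega(\mathbf{v}_{\mathbf{a}},\mathbf{s})$ is \emph{strictly positive} for every nontrivial $\mathbf{s}$ with existing limit; this is the content of Lemmas~\ref{lemma:k-weight} and~\ref{lemma-num}, where the point is that the relevant inequalities become strict precisely when $\mathbf{b}=\mathbf{a}$. Relatedly, the weight contributions $\omega_k(\mathbf{e}_i,\mathbf{s})$ involve $|s_k|$ and are genuinely piecewise linear in $\mathbf{s}$, not linear as you state; this nonlinearity is exactly what produces strict positivity rather than the cancellation you describe.

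There is a second gap: restricting to $1$-PS in the stabiliser $G[n]_q$ is not sufficient. The relative criterion requires all $\lambda_{\mathbf{s}}$ for which the limit of $[Z]$ exists, and by Proposition~\ref{prop:pointweight}(a) these are characterised by $s_{i-1}\le s_i$ for $i\notin I$, a strictly larger class than those fixing $q$. The paper's explicit destabilising $1$-PS for $\mathbf{v}(Z)\ne\mathbf{v}_{\mathbf{a}}$ (in the proof of Theorem~\ref{theorem-stablelocus}) lie in this larger class but not in $G[n]_q$. For part~(2) the paper does not pass to a flat limit as you propose; instead it bounds the weight at a singular point above by the corresponding smooth-support weight (Lemma~\ref{lemma-weight-sing}) and observes that the two possible assignments of each singular point to an adjacent component give two distinct vectors $\mathbf{v}',\mathbf{v}''\in\mathcal{V}$, at least one of which differs from $\mathbf{v}_{\mathbf{a}}$, so the destabilising $1$-PS already constructed in the smooth case applies.
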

We denote the locus of stable points by
$\mathbf{H}^n_{\mathrm{GIT}}:=\mathbf{H}^n(\sheaf{M}_{\ell})^{s}$ (it does not
depend on $\ell$). It is interesting to note that our GIT approach
independently also leads to the property that stable cycles have smooth
support, a condition also appearing in Li--Wu stability. In fact, GIT stable
cycles are always Li--Wu stable, but  the converse does not hold in general. In
other words we obtain an inclusion $\mathbf{H}^n_{\mathrm{GIT}} \subset
\mathbf{H}^n_{\mathrm{LW}}$ of GIT stable cycles in Li--Wu stable cycles,
which, in general, is strict, see Lemma \ref{lemma-stabilitycomparison} and the
comment following it. 
 
We can now form the {\em GIT-quotient}
\begin{equation*}
	I^n_{X/C} = \mathbf{H}^n_{\mathrm{GIT}}/G[n].
\end{equation*}
This is the main new object which we construct in this paper. The advantage of
our method is that we can control the GIT stable points very explicitly and
this allows us to analyse the geometry of the fibres of the degenerate Hilbert
schemes in great detail. Moreover, we can also use the results of
\cite{GHH-2015}, where it was shown, in particular, that  $I^n_{X/C}$ is
projective over $C$.

We can also form the {\em stack quotient} 
\begin{equation*}
	{\mathcal I}^n_{X/C} = [\mathbf{H}^n_{\mathrm{GIT}}/G[n]].
\end{equation*}
Our main result about this stack is  
\begin{theorem}\label{intro:GITquotient}
	The GIT quotient $ I^n_{X/C} $ is projective over $C$. The stack
	$\mathcal{I}^n_{X/C} $ is a Deligne-Mumford stack, proper and of finite type
	over $C$, having $ I^n_{X/C} $ as coarse moduli space.
\end{theorem}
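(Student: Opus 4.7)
The plan is to reduce both assertions to Theorem \ref{the:intro-stablelocus} together with the relative GIT established in \cite{GHH-2015}. The key observation is that the characterisation in Theorem \ref{the:intro-stablelocus} yields
\[
  \mathbf{H}^n(\sheaf{M}_{\ell})^{ss} = \mathbf{H}^n(\sheaf{M}_{\ell})^{s} = \mathbf{H}^n_{\mathrm{GIT}},
\]
since part (1) upgrades every semistable cycle with smooth support to a stable one, while part (2) excludes semistability without smooth support. Thus no strictly semistable orbits occur and the GIT quotient is automatically a geometric quotient. Projectivity of $I^n_{X/C}$ over $C$ then follows at once from the relative Mumford-type statement \cite[Cor.~1.1]{GHH-2015}, applied to the $G[n]$-linearized relatively ample sheaf $\sheaf{M}_{\ell}$ on $\mathbf{H}^n \to C[n]$, together with the identification $C[n]/\!/G[n] = C$ recalled in the introduction.

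For the stack-theoretic assertions I would proceed as follows. The same GIT input implies that the $G[n]$-action on $\mathbf{H}^n_{\mathrm{GIT}}$ is proper and has finite stabilizers, since this is a standard feature of GIT stable loci for reductive group actions, and here we are in characteristic zero. Consequently $\mathcal{I}^n_{X/C} = [\mathbf{H}^n_{\mathrm{GIT}}/G[n]]$ is a Deligne--Mumford stack; it is of finite type over $C$ because $\mathbf{H}^n$ is of finite type over $C[n]$ and $C[n]$ is of finite type over $C$. The coarse moduli space of the quotient stack of a scheme by a proper group action with finite stabilizers is the scheme-theoretic geometric quotient, which in our situation is $I^n_{X/C}$. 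Finally, properness of $\mathcal{I}^n_{X/C}$ over $C$ follows, via Keel--Mori, from the projectivity of $I^n_{X/C}$ over $C$ obtained in the first step.

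The only point that is not purely formal is verifying that the $G[n]$-action on the stable locus $\mathbf{H}^n_{\mathrm{GIT}}$ is proper with finite stabilizers. This is where I would expect the main obstacle to lie, although both properties are essentially built into the relative GIT framework of \cite{GHH-2015}; once those are invoked, the remainder of the argument is standard stack formalism and requires no further input from the detailed geometry of the expanded degeneration.
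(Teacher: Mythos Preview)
Your argument is essentially the paper's own: invoke the relative GIT package of \cite{GHH-2015} to obtain projectivity of the quotient over $C[n]/\!/G[n]=C$, use finiteness (and reducedness, in characteristic zero) of stabilizers on the stable locus for the Deligne--Mumford property, identify the geometric GIT quotient as the coarse moduli space, and deduce properness of the stack from projectivity of $I^n_{X/C}$ together with properness of the coarse moduli map. One correction and one remark: the projectivity input you need from \cite{GHH-2015} is \cite[Prop.~2.6]{GHH-2015}, not \cite[Cor.~1.1]{GHH-2015} (the latter is the relative Hilbert--Mumford criterion, already consumed in the proof of Theorem~\ref{the:intro-stablelocus}); and your closing concern is misplaced, since properness of the action and finiteness of stabilizers on the \emph{stable} locus are general features of any linearized action of a reductive group, so this step is just as formal as the rest.
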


We also investigate how the GIT stack quotient and the Li--Wu stack compare.
For this we construct a natural morphism $f \colon \mathcal{I}^n_{X/C} \to
\mathcal{I}^n_{\mathfrak{X}/\mathfrak{C}}$ between the two stacks and show
\begin{theorem}\label{intro:comparison}
	The morphism $f \colon \mathcal{I}^n_{X/C} \to
	\mathcal{I}^n_{\mathfrak{X}/\mathfrak{C}} $ is an isomorphism of
	Deligne-Mumford stacks.
\end{theorem}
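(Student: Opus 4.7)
The plan is to prove Theorem \ref{intro:comparison} by showing that $f$ is fully faithful and essentially surjective as a morphism of groupoids over $C$. Since both source and target are proper Deligne--Mumford stacks of finite type over $C$ (Theorem \ref{intro:GITquotient} and the analogous result of Li--Wu), a bijection on isomorphism classes of $T$-points for every $C$-scheme $T$ will suffice. First I would verify that $f$ is well-defined at the level of families: a $T$-point of $[\mathbf{H}^n_{\mathrm{GIT}}/G[n]]$ is a $G[n]$-torsor $P \to T$ together with an ideal sheaf $\sheaf{I}_Z$ on the induced family of expanded degenerations. By the inclusion $\mathbf{H}^n_{\mathrm{GIT}} \subset \mathbf{H}^n_{\mathrm{LW}}$ (Lemma \ref{lemma-stabilitycomparison}) this ideal sheaf is fibrewise Li--Wu stable, and the $G[n]$-action is by construction a special case of Li--Wu equivalence; hence $\sheaf{I}_Z$ descends to a $T$-point of $\mathcal{I}^n_{\mathfrak{X}/\mathfrak{C}}$.

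The fully faithful part reduces to showing that, for GIT-stable cycles $Z_1, Z_2$, every Li--Wu isomorphism of ambient expanded degenerations carrying $Z_1$ to $Z_2$ lifts uniquely to an element of $G[n]$. Here Theorem \ref{the:intro-stablelocus} is essential: since GIT stability pins the numerical support to $\mathbf{v}_{\mathbf{a}}$ and forces smooth support, any such equivalence must preserve the multiplicities on the inserted ruled components and therefore must be realized by the torus rescaling the bubble coordinates. This identifies automorphism groups on the two sides and gives both injectivity and fullness of $f$ on $T$-points.

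Essential surjectivity is the main obstacle. Given a Li--Wu stable ideal sheaf $\sheaf{I}_{Z'}$ on an expanded degeneration $X[m] \to C[m]$, I need to produce a GIT-stable representative in its Li--Wu equivalence class. The natural strategy is to contract those ruled components of $X[m]$ that are ``superfluous'' in the sense that their occupation by $Z'$ does not match the vector $\mathbf{v}_{\mathbf{a}}$ associated to the image point in $C$. After this contraction one lands in some $X[n] \to C[n]$ in which the transformed cycle has numerical support exactly $\mathbf{v}_{\mathbf{a}}$, so by Theorem \ref{the:intro-stablelocus}(1) it is GIT-stable. The delicate point — and the place where the main technical work sits — is carrying out this contraction in families so that it defines a morphism of stacks inverse to $f$, rather than only a pointwise bijection; this requires the explicit description of $X[n] \to C[n]$ together with the $G[n]$-action developed in the construction. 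Once this is in place, combining it with the fully faithful step yields the isomorphism.
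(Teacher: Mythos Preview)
Your essential-surjectivity step has the geometry backwards. A Li--Wu stable length-$n$ subscheme $Z$ lives on some $X[m]_0$ with $m\le n$, and by Lemma \ref{LiWu-stab-pt} every inserted component $\Delta^i$ is already occupied, so there is nothing superfluous to contract. The move that actually works is the opposite one: embed $X[m]$ into $X[n]$ via a standard embedding $\tau_I\colon C[m]\to C[n]$. The key observation (Lemma \ref{lemma-pointsstab}) is that the numerical support $\mathbf{v}(Z)\in\mathcal{V}$ determines, through the bijection $\mathcal{B}\to\mathcal{V}$, a \emph{unique} $\mathbf{a}\in\mathcal{B}$ and hence a unique subset $I_{\mathbf{a}}\subset[n+1]$ of cardinality $m+1$; for exactly this $I$ the cycle $Z$, transported to $X[n]$ via $\tau_{I_{\mathbf{a}}}$, satisfies $\mathbf{v}(Z)=\mathbf{v}_{\mathbf{a}}$ and is GIT-stable by Theorem \ref{theorem-stablelocus}. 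Any other choice of $I$ gives a Li--Wu stable but GIT-unstable point, which is also what gives injectivity.

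More importantly, the paper does not try to promote this pointwise recipe to an inverse in families --- the step you flag as delicate. It avoids that entirely. The argument is: (i) $f$ is representable, by \cite[Lem.~6]{AK13}, using that $\mathcal{I}^n_{X/C}$ has finite inertia and that $f$ induces isomorphisms on automorphism groups at geometric points; (ii) $f$ is \'etale, since it is already smooth (being a factorization of the smooth map $\mathbf{H}^n_{\mathrm{GIT}}\to\mathcal{I}^n_{\mathfrak{X}/\mathfrak{C}}$ through the smooth quotient $\mathbf{H}^n_{\mathrm{GIT}}\to\mathcal{I}^n_{X/C}$) and unramifiedness is checked on geometric points; (iii) a general lemma (Lemma \ref{lemma-stackstech}) then says that a representable \'etale finite-type morphism of DM stacks over $k$ which is bijective on $k$-points and on automorphism groups is an isomorphism. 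All verification thus takes place over $\Spec k$, and the family-level construction you anticipated is never needed.
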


In this way our approach gives an alternative proof of the  properness over the
base curve $C$ of the Li--Wu stack $\mathcal{I}^n_{\mathfrak{X}/\mathfrak{C}}$
for Hilbert schemes of points, see \cite[Thm.~3.54]{li-2013}. It thus turns out
that our GIT approach and the Li--Wu construction of degenerations of Hilbert
schemes of points are in fact equivalent. The main advantage which we have thus
gained is, in addition to constructing a relatively projective coarse moduli
space for the Li--Wu stack, that we have the tools to explicitly describe the
degenerate Hilbert schemes. We will illustrate this with the example of degree
$n$ Hilbert schemes on two components, which we treat in detail in Section
\ref{sec:examples}.

Of course, one of the main objectives of this research is to construct good
degenerations of, in particular, Hilbert schemes of $K3$ surfaces, such as in
the work of  Nagai \cite{nagai-2008} who used an ad hoc approach which works
very well in degree $2$. At this point it is also worth noting that the simple
approach of taking the relative Hilbert scheme will not work as this is hard to
control and badly behaved. We will study the properties of our degenerations in
detail in \cite{GHHZ-2018}, but we would like to mention at least the main
results in support of our approach. First of all, starting with a strict simple
degeneration $X \to C$ of surfaces, the GIT stack ${\mathcal I}^n_{X/C}$ is
smooth and semi-stable as a $DM$-stack over $C$. The scheme $I^n_{X/C}$ has
finite quotient singularities -- see also Section \ref{sec:examples} for the
degree $2$ case -- and $(I^n_{X/C},(I^n_{X/C})_0)$ is simple normal crossing up
to finite group actions.  In particular, this allows us to attach a dual
complex to the central fibre and due to our good control of the degenerations
we can describe this complex explicitly. If $X \to C$ is a type II degeneration
of $K3$ surfaces, then the stack ${\mathcal I}^n_{X/C}$ carries a nowhere
degenerate relative logarithmic $2$-form. If $X \to C$ is any strict simple
degeneration of surfaces, then $(I^n_{X/C},(I^n_{X/C})_0)$  is a dlt
(divisorial log terminal) pair. Moreover, if we start with a type II Kulikov
model of $K3$ surfaces $I^n_{X/C} \to C$ is a minimal dlt model. In this case
the dual complex can be identified with the Kontsevich--Soibelman skeleton.

Lastly, let us remark that for a simple degeneration $f \colon X \to C$, it is
also natural to consider configurations of $n$ points in the fibres of $f$
(rather than length $n$ subschemes, as in this article). This has been studied
thoroughly by Abramovich and Fantechi in \cite{AF14}. In particular, they
exhibit a moduli space, which is projective over $C$, parametrizing
\emph{stable} configurations.

\subsection{Organization of the paper}\label{intro:organization}

The paper is organized as follows. Section \ref{sec:expandeddeg} introduces
most of the main concepts and technical tools. In particular, we will review
the notions of a simple degeneration $X \to C$ and of expanded degenerations
$X[n] \to C[n]$,  as well as the action of the rank $n$ torus $G[n]$ on $X[n]
\to C[n]$. The construction of $X[n] \to C[n]$  depends on the choice of an
orientation of the dual graph $\Gamma(X_0)$ of the central fibre $X_0$. In
Proposition \ref{prop:localeq} we shall give a concrete description and local
equations for $X[n] \to C[n]$, see also  \cite[\S 4.2]{wu-2007}. We then enter
into a discussion of the properties of $X[n] \to C[n]$. We will prove in
Proposition \ref{prop:scheme-criterion}  that the algebraic space $X[n]$ is a
scheme if and only if the degeneration $X \to C$ is {\em strict}, i.e. all
components of $X_0$ are smooth. Our next aim is to understand when the morphism
$X[n] \to C[n]$ is projective. It turns out that this is the case if and only
if the directed graph $\Gamma(X_0)$ contains {\em no directed cycles}, see
Proposition \ref{prop:projectivity-criterion}. Since we aim at a GIT approach
we need a relatively ample line bundle $\sheaf L$ on $X[n] \to C[n]$ together
with a suitable $G[n]$-linearization. This will be achieved in Section
\ref{sec:linearization} and this construction is the technical core of our
approach. At this point we must impose another condition on the degeneration $X
\to C$, namely that the dual graph $\Gamma(X_0)$ can be equipped with a {\em
bipartite orientation}, see Section \ref{sec:linearization}. In Proposition
\ref{prop:inversion} we shall prove that reversing the orientation of the graph
$\Gamma(X_0)$ leads to an isomorphic quotient. Finally, we investigate the
fibres of the morphism $X[n] \to C[n]$ in detail and enumerate their components
in Proposition \ref{prop:components}, an essential tool for all practical
computations, in particular also for the GIT analysis. 

In Section \ref{sec:GIT-analysis} we perform a careful analysis of GIT
stability. Using the line bundle $\sheaf L$ we construct the relatively ample
line bundle ${\sheaf M}_{\ell}$  on $\mathbf{H}^n = \Hilb^n(X[n]/C[n])$, which
inherits a $G[n]$-linearization. The main result of this section is Theorem
\ref{theorem-stablelocus} (Theorem \ref{the:intro-stablelocus}) where we
characterize the stable locus. For these calculations we will make extensive
use of the local coordinates which we introduce in Section
\ref{sec:localcoordinates}.
 
Section \ref{sec:quotients} is devoted to the comparison of our construction
with the Li--Wu stack. For this we introduce the GIT quotient stack ${\mathcal
I}^n_{X/C}$ and prove the properness Theorem \ref{prop-GITquotient} (Theorem
\ref{intro:GITquotient}).  Finally we construct a map between the GIT quotient
stack and the Li--Wu stack and prove their equivalence in Theorem
\ref{prop:comparison} (Theorem \ref{intro:comparison}). 

In Section \ref{sec:examples} we  discuss one example in detail in order to
illustrate how the machinery works. The example we have chosen is a simple
degeneration $X \to C$ where $X_0$ has two components. We shall describe the
geometry of the central fibre $(I_{X/C})_0$ in detail and, in case of degree
$2$ give a complete classification of the singularities of the total space. We
also compute the dual complex for arbitrary degree $n$, which turns out to be
the  standard $n$-simplex.

\subsection{Notation}

We work over a field $k$ which is algebraically closed of characteristic zero.
By a \emph{point} of a $k$-scheme of finite type, we will always mean a closed
point, unless further specification is given. The projectivization
$\PP(\sheaf{E})$ of a coherent sheaf $\sheaf{E}$ is Grothendieck's
contravariant one, parametrizing rank one quotients.

For an integer $n$ we denote $[n] = \{1, \ldots, n\}$. 


\section{Expanded degenerations}\label{sec:expandeddeg}

Here we recall a construction, due to Li \cite{li-2001}, which to a simple
degeneration $X\to C$ over a curve (Definition \ref{def:simple}), together with
an orientation of the dual graph $\Gamma$ of the degenerate fibre $X_0$,
associates a family of {\em expanded degenerations} $X[n]\to C[n]$ over an
$(n+1)$-dimensional base $C[n]$, equipped with an action by an algebraic torus
$G[n] \iso \Gm^n$.

The aim of the current section is to set the stage for applying GIT to the
induced $G[n]$-action on the relative Hilbert scheme $\Hilb^n(X[n]/C[n])$. Thus,
after recalling Li's construction in Section \ref{sec:expanded-summary} we study
under which circumstances $X[n]\to C[n]$ is a projective scheme in Section
\ref{sec:projectivity}. In Section \ref{sec:fibres} we study its fibres.
Finally, in Section \ref{sec:linearization} we restrict to the case of a
bipartite oriented graph $\Gamma$ and in this situation equip $X[n]$ with a
particular linearization of the $G[n]$-action and compute the associated
Hilbert--Mumford invariants.

\subsection{Construction of the family $X[n]\to C[n]$}\label{sec:expanded-summary}

In this section we summarize work from Li \cite{li-2001, li-2013}, Wu
\cite{wu-2007} and Li--Wu \cite{LW-2011} on expanded degenerations.

\subsubsection{Setup}

Let $C$ be a smooth curve with a distinguished point $0\in C$. Following the
terminology of Li--Wu \cite[Def.~1.1]{LW-2011} we define:

\begin{definition}\label{def:simple}
	A \emph{simple degeneration} is a flat morphism $\pi \colon X\to C$
	from a smooth algebraic space $X$ to a $k$-smooth curve $C$ with a
	distinguished point $0\in C$, such that
	\begin{enumerate}[\upshape(i)]
		\item $\pi$ is smooth outside the central fibre $X_0 = \pi^{-1}(0)$ and
		\item the central fibre $X_0$ has normal crossing singularities and its
		      singular locus $D\subset X_0$ is smooth.
	\end{enumerate}
	We call a simple degeneration {\em strict} if all components of $X_0$ are
	smooth.
\end{definition}

In \'etale local coordinates, a simple degeneration $X\to C$ is thus of the form
$t=xy$. The main motivation for us is degenerations of K3 surfaces: Kulikov
models of type II are simple degenerations, whereas Kulikov models of type III
are not, because of triple intersections in the central fibre.

\begin{inputdata}
	The input data to Li's construction are
	\begin{itemize}
		\item a smooth base curve $C$ with a distinguished point $0\in C$ and an
		      \'etale morphism $t\colon C\to \AA^1$ with $t^{-1}(0) = \{0\}$,
		\item a strict simple degeneration $X\to C$ and
		\item an orientation of the dual graph $\Gamma$ of the special fibre $X_0$.
	\end{itemize}
\end{inputdata}

Let $G[n]\subset \SL(n+1)$ be the diagonal maximal torus and let $C[n]$ be the
fibre product $C\times_{\AA^1}\AA^{n+1}$ with respect to $t\colon C\to \AA^1$
and the multiplication morphism $\AA^{n+1}\to \AA^1$. Then $G[n]$ acts naturally
on $\AA^{n+1}$ such that $\AA^{n+1}\to \AA^1$ is invariant. Hence there are
induced actions on $C[n]$ and on $X\times_C C[n]$.

\begin{outputdata}
	The output of Li's construction is an explicit small $G[n]$-equivariant
	resolution $X[n]$ of the fibre product $X\times_C C[n]$.
\end{outputdata}

\begin{remark}
	\leavevmode
	\begin{itemize}
		\item The logic here is not axiomatic; rather we give the explicit
		      construction of $X[n]$ first and study its properties afterwards.
		\item Li's construction applies also to non-strict simple degenerations, but
				this requires an additional hypothesis and is more cumbersome to
				state. We briefly treat the non-strict case in Section \ref{sec:schemeness}.
		\item It suffices to treat $C=\AA^1$ since $X\times_C C[n] \iso
				X\times_{\AA^1}\AA^{n+1}$. We occationally take advantage of this,
				notably in Section \ref{sec:two-components}, but otherwise keep the
				base curve $C$ as this is notationally convenient.
	\end{itemize}
\end{remark}

\begin{remark}\label{rem:normalization}
	Li does not use the language of an orientation of the dual graph $\Gamma$.
	Instead, let $\nu\colon \widetilde{X_0} \to X_0$ be the normalization
	morphism. For each component $D$ of the singular locus of $X_0$ the inverse
	image $\nu^{-1}(D)$ is a disjoint union of two copies of $D$. Li fixes a
	labelling $\nu^{-1}(D) = D^+ \cup D^-$ for each such $D$, and in fact
	considers this labelling as part of the data defining a simple degeneration.
	This is equivalent to our orientation of $\Gamma$: viewing the nodes of
	$\Gamma$ as the components of $\widetilde{X_0}$, the corresponding
	orientation of the edge $[D]$ points from the (node corresponding to the)
	component containing $D^-$ to that containing $D^+$.
\end{remark}

\subsubsection{The basic case}\label{sec:two-components}

We first define $X[n]$ in the special case where $X_0 = Y_1 \cup Y_2$ has two
smooth components with irreducible intersection $D = Y_1 \cap Y_2$. The dual
graph $\Gamma$ consists of two nodes $[Y_i]$ connected by one edge $[D]$. Fix
one out of the two possible orientations and then relabel the two components
$Y_i$ if necessary so that $[D]$ points from $[Y_1]$ to $[Y_2]$:
\begin{equation*}
	[Y_1] \xrightarrow{[D]} [Y_2]
\end{equation*}
Recursively assume a small resolution $X[n-1]\to X\times_{\AA^1}\AA^n$ has been
constructed. View this first as a morphism over $\AA^n$ and then via the last
coordinate $t_n\colon \AA^n\to \AA^1$ as a morphism over $\AA^1$. Now pull back
along the multiplication morphism $m\colon \AA^2\to \AA^1$ to obtain a partial
resolution
\begin{equation}\label{eq:partial}
	X[n-1]\times_{\AA^1}\AA^2\to X\times_{\AA^1}\AA^{n+1}
\end{equation}
as in the diagram
\begin{equation*}
	\begin{tikzcd}[column sep=large]
		X[n-1]\times_{\AA^1}\AA^2 \ar[d] \ar[r]
			& X\times_{\AA^1}\AA^{n+1} \ar[d] \ar[r]
			& \AA^{n+1} \ar[d, "{(t_1,\dots,t_{n-1},m)}"'] \ar[r, "{(t_n, t_{n+1})}"]
			& \AA^2 \ar[d, "m"'] \\
		X[n-1] \ar[r] & X\times_{\AA^1}\AA^n \ar[r] & \AA^n \ar[r, "t_n"] & \AA^1
	\end{tikzcd}
\end{equation*}
where all squares are Cartesian.

\begin{proposition}\label{prop:two-components}
The following recursion defines a small $G[n]$-equivariant resolution $X[n]$ of
the fibre product $X\times_{\AA^1}\AA^{n+1}$ for all natural numbers $n$:
\begin{enumerate}[\upshape(i)]
	\item $X[0] = X$
	\item $X[n]$ is the blow-up of $X[n-1]\times_{\AA^1}\AA^2$ along the strict
	      transform of
	      \begin{equation*}
	      	Y_1\times V(t_{n+1}) \subset X \times_{\AA^1}\AA^{n+1}
	      \end{equation*}
			under the partial resolution \eqref{eq:partial}. Here the chosen
			orientation of the dual graph $\Gamma$ is used to order the components
			$Y_i$ of $X_0$ by the convention that the arrow $[D]$ has source
			$[Y_1]$ and target $[Y_2]$.
\end{enumerate}
\end{proposition}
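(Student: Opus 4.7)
The plan is to prove this by induction on $n$. The base case $n=0$ is trivial, since $X[0]=X$ is the desired resolution and $G[0]$ is the trivial group. For the inductive step, assume $X[n-1]\to X\times_{\AA^1}\AA^n$ is a small $G[n-1]$-equivariant resolution. Base-changing along $m\colon\AA^2\to\AA^1$ yields a partial resolution $X[n-1]\times_{\AA^1}\AA^2\to X\times_{\AA^1}\AA^{n+1}$, and the task is to show that the blow-up of the strict transform of $Y_1\times V(t_{n+1})$ repairs the remaining singularities while preserving smallness and equivariance.

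Since smoothness and smallness are \'etale-local properties, the analysis reduces to explicit local charts near a node of $X$; away from nodes the whole construction is already smooth. Near a node, fix \'etale coordinates so that $X$ is given by $xy=t$, with $Y_1=\{x=0\}$ and $Y_2=\{y=0\}$ determined by the chosen orientation. Then $X\times_{\AA^1}\AA^{n+1}$ is locally the variety $V(xy-t_1\cdots t_{n+1})$. The induction hypothesis, together with the forthcoming explicit local description in Proposition~\ref{prop:localeq} (following \cite[\S 4.2]{wu-2007}), provides an affine cover of $X[n-1]$ whose local models, after the base change $t_n\mapsto t_nt_{n+1}$, become conifolds of the form $xy=t_it_{n+1}$ with the remaining $t_k$'s appearing as free parameters. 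In each such chart the strict transform of $Y_1\times V(t_{n+1})$ is the non-Cartier Weil divisor $\{x=t_{n+1}=0\}$ on the conifold, whose blow-up is the standard small resolution, producing two smooth affine charts joined by a $\PP^1$ over the conifold point.

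This local computation simultaneously establishes the smoothness of $X[n]$ and shows that the exceptional fibres introduced at this step are $\PP^1$'s lying over codimension-two loci, so smallness propagates from $X[n-1]$. The $G[n]$-equivariance is then mostly formal: $G[n]$ acts trivially on $X$ and diagonally on $\AA^{n+1}$, so both $Y_1$ and $V(t_{n+1})$ are invariant, hence so is $Y_1\times V(t_{n+1})$. The induced $G[n]$-action on $X[n-1]\times_{\AA^1}\AA^2$ is built by combining the inductive $G[n-1]$-action on $X[n-1]$ with the scaling on the $\AA^2$-factor through the short exact sequence
\begin{equation*}
1\to\Gm\to G[n]\to G[n-1]\to 1
\end{equation*}
arising from $(g_n,g_{n+1})\mapsto g_ng_{n+1}$, and the blow-up of a $G[n]$-invariant closed subscheme inherits the action automatically.

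The main obstacle is the explicit local bookkeeping underlying the inductive step: one must verify, chart by chart and for each possible pattern of vanishing $t_i$'s along the relevant locus, that $X[n-1]\times_{\AA^1}\AA^2$ really does acquire only conifold singularities along the strict transform of $Y_1\times V(t_{n+1})$, and that this strict transform is always the correct non-Cartier divisor for the small resolution. The content is essentially toric and combinatorial, but non-trivial to track by hand; encapsulating it in a clean local model, as Proposition~\ref{prop:localeq} will do, is what makes the induction go through cleanly.
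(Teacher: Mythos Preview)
Your overall strategy---induction on $n$, \'etale-local reduction to a conifold, then a small resolution by blowing up a non-Cartier Weil divisor---is the right one, and matches the paper's approach in spirit. However, there is a logical gap and a structural difference worth noting.

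The gap is your appeal to Proposition~\ref{prop:localeq}. That proposition records the explicit local equations of $X[n]$ and is \emph{derived from} the present construction (the paper says it is ``straight forward to verify from the construction in Proposition~\ref{prop:two-components}''). Citing it here to justify the inductive step is circular. You would need to establish those local charts independently as part of your induction, which is possible but is precisely the ``chart-by-chart bookkeeping'' you flag as the main obstacle and then do not actually carry out. Also, your description of the singular local models as ``conifolds of the form $xy=t_it_{n+1}$'' is imprecise: after the base change $t_n\mapsto t_nt_{n+1}$, only the chart involving the relation $t_n=\tfrac{u_n}{v_n}\tfrac{v_{n-1}}{u_{n-1}}$ becomes singular, and it becomes the specific conifold $t_nt_{n+1}=\tfrac{u_n}{v_n}\tfrac{v_{n-1}}{u_{n-1}}$; the other charts remain smooth.

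The paper avoids this bookkeeping by a cleaner structural observation: it proves (as a \emph{Claim} inside the proof) that $X[n-1]$, viewed as a family over $\AA^1$ via the last coordinate $t_n$, is itself a simple degeneration with exactly two components and irreducible intersection, and with dual graph canonically identified with that of $X_0$. This immediately gives $X[n]=X[n-1][1]$, so the entire induction collapses to a single explicit verification for $n=1$ (done directly in coordinates $ux=t_1v$, $vy=t_2u$) together with checking the Claim for $X[1]$. This is what lets the paper dispense with tracking all charts at level $n$. Your argument would be completed either by adopting this reduction, or by proving the local equations simultaneously with the resolution statement as a strengthened induction hypothesis.
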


\begin{proof}
	First consider $X[1]$. This is defined to be the blow-up of $X\times_{\AA^1}
	\AA^2$ along $Y_1\times V(t_2)$. In \'etale local coordinates $X\to \AA^1$ is
	$t=xy$ and the fibre product $X\times_{\AA^1} \AA^2$ is $t_1t_2=xy$. Blowing
	up either of the Weil divisors $x=t_2=0$ or $y=t_2=0$ yields a small
	resolution. Explicitly, let $U$ be an \'etale local neighbourhood in $X$ with
	coordinates $x$ and $y$ such that $Y_1$ is given by the equation $y=0$. The
	blow-up along $y=t_2=0$ is then the locus
	\begin{align*}
		ux &= t_1v\\
		vy &= t_2u
	\end{align*}
	in $U\times\AA^2\times\PP^1$, where $(u:v)$ are homogeneous coordinates on
	$\PP^1$. Thus $X[1]$ is indeed a small resolution.
	
	This basic construction is now repeated: view $X[n-1]$ as a family over
	$\AA^1$ via the last coordinate $t_n\colon \AA^n\to \AA^1$.
	
	\begin{quote}
		\textit{Claim:} 
		$X[n-1]\to \AA^1$ is a simple degeneration. Its central fibre has two
		components, the strict transforms $Y_i^{(n-1)}$ of $Y_i\times
		V(t_n)\subset X\times_{\AA^1}\AA^n$. Their intersection $D^{(n-1)}$ is the
		strict transform of $D\times V(t_n)$ and it is irreducible.
	\end{quote}
	
	Granted this, the dual graph of $\res{X[n-1]}{t_n=0}$ is identified with that
	of $X_0$ and inherits an orientation
	\begin{equation*}
		[Y^{(n-1)}_1] \xrightarrow{[D^{(n-1)}]} [Y^{(n-1)}_2].
	\end{equation*}
	Now the strict transform of $Y_1\times V(t_{n+1})$ in
	$X[n-1]\times_{\AA^1}\AA^2$ is precisely
	\begin{equation*}
		Y^{(n-1)}_1\times V(t_{n+1}) \subset X[n-1]\times_{\AA^1}\AA^2
	\end{equation*}
	(writing $(t_n,t_{n+1})$ for the coordinates on the last factor $\AA^2$).
	Thus the recipe says $X[n] = X[n-1][1]$. In view of this it suffices to
	verify the claim for $X[1]$ viewed as a family over $\AA^1$ via $t_2$: in
	fact under the blow-up
	\begin{equation*}
		X[1]\to X\times_{\AA^1} \AA^2
	\end{equation*}
	the strict transform of $Y_1\times V(t_2)$ is an isomorphic copy of it, the
	strict transform of $Y_2\times V(t_2)$ is its blow-up along $D\times
	\{(0,0)\}$, and they are normal crossing divisors intersecting along the
	strict transform of $D\times V(t_2)$, which is an isomorphic copy of it.
	These statements are readily verified with the help of the local equations
	for $X[1]$ given above.
\end{proof}

\subsubsection{The general case}\label{sec:general-case}
Return to the situation of an arbitrary strict simple degeneration $X\to C$,
where the central fibre $X_0\subset X$ and its singular locus are allowed to
have several components. Fix an orientation of the dual graph $\Gamma$. We
phrase the definition of $X[n]$ in somewhat informal language and expand the
precise meaning below.

\begin{definition}\label{def:expanded}
	Let $X[n]\to X\times_C C[n]$ be the small resolution obtained by applying
	Proposition \ref{prop:two-components} locally around each component of the
	singular locus of $X_0$.
\end{definition}

Explicitly:
\begin{itemize}
	\item For each component $D$ of the singular locus of $X_0$ there are unique
			components $Y_1$ and $Y_2$ of $X_0$ such that $D$ is a component of
			$Y_1\cap Y_2$. Use the chosen orientation of $\Gamma$ to distinguish
			the role of the two components $Y_i$, by labelling them so that the
			arrow $[D]$ in the oriented dual graph $\Gamma$ points from $[Y_1]$ to
			$[Y_2]$. \item Define $U\subset X$ to be the Zariski open subset whose
			complement is the union of all components of $X_0$ except $Y_1$ and
			$Y_2$ together with all components of $Y_1\cap Y_2$ except $D$. Thus
			$U$ is a Zariski open neighbourhood around $D$ such that $U_0$ has
			exactly two components $Y_i\cap U$ with intersection $D$.
	\item Apply Proposition \ref{prop:two-components} to obtain the small resolution
	      $U[n]\to U\times_C C[n]$.
	\item This small resolution is an isomorphism away from $D\times_C C[n]$, and
			the collection of all $U$ as $D$ varies covers $X$. Thus the various
			$U[n]$ glue to yield the global small resolution $X[n]\to X\times_C
			C[n]$.
\end{itemize}

\subsubsection{The group action}
We equip $X[n]$ with a $G[n]$-action such that $X[n]\to X\times_C C[n]$ is
equivariant. The target is here equipped with the action induced by the natural
action of $G[n]$ on $\AA^{n+1}$ and (hence) on $C[n]$.

Recall that an element in $G[n]$ is an $(n+1)$-tuple $(\sigma_1,\dots,
\sigma_{n+1})$ of elements in $\Gm$ such that $\prod_i \sigma_i = 1$. There is a
Cartesian diagram
\begin{equation*}
	\begin{tikzcd}
		G[n] \ar[r, "{\pr_{n,n+1}}"] \ar[d, "{m_{n,n+1}}"] & \Gm^2 \ar[d, "m"]\\
		G[n-1] \ar[r, "\pr_n"] & \Gm
	\end{tikzcd}
\end{equation*}
where $\pr_n$ and $\pr_{n,n+1}$ are projections onto the last and the last two
coordinates, $m$ is multiplication and $m_{n,n+1}$ multiplies together the last
two coordinates.

\begin{proposition}
	There is a unique $G[n]$-action on $X[n]$ such that
	\begin{equation*}
		X[n]\to X[n-1]\times_{\AA^1}\AA^2
	\end{equation*}
	is equivariant with respect to the natural action of $G[n]\iso
	G[n-1]\times_{\Gm}\Gm^2$ on the target.
\end{proposition}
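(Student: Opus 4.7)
The plan is to argue by induction on $n$, exploiting the recursive definition of $X[n]$ as a blow-up of $X[n-1]\times_{\AA^1}\AA^2$ together with the Cartesian decomposition $G[n]\iso G[n-1]\times_{\Gm}\Gm^2$. The base case $n=0$ is trivial, since $X[0]=X$ and $G[0]$ is the trivial group; the case $n=1$ can also be verified by hand from the explicit blow-up equations in Proposition \ref{prop:two-components}.

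For the inductive step, assume $G[n-1]$ already acts on $X[n-1]$ making the structure morphism $X[n-1]\to X\times_{\AA^1}\AA^n$ equivariant. The torus $G[n-1]\times\Gm^2$ then acts naturally on $X[n-1]\times\AA^2$ (trivially on the $\AA^2$-factor through $G[n-1]$, and coordinatewise through $\Gm^2$), and restricts via the fibre product $G[n]\iso G[n-1]\times_{\Gm}\Gm^2$ to an action on $X[n-1]\times_{\AA^1}\AA^2$ for which the partial resolution \eqref{eq:partial} is $G[n]$-equivariant (this is just base change of the inductive equivariance along the multiplication map $m\colon\AA^2\to\AA^1$).

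The core of the argument is then to lift this action across the blow-up $X[n]\to X[n-1]\times_{\AA^1}\AA^2$. By the universal property of blow-ups, it suffices to show that the blow-up centre, namely the strict transform of $Y_1\times V(t_{n+1})$, is $G[n]$-stable. Now $V(t_{n+1})\subset\AA^{n+1}$ is a coordinate hyperplane and is therefore stable under the full diagonal torus, and the subvariety $Y_1\subset X$ is trivially $G[n]$-invariant because $G[n]$ acts trivially on the factor $X$. Hence $Y_1\times V(t_{n+1})$ is $G[n]$-invariant in $X\times_{\AA^1}\AA^{n+1}$, and since strict transforms under equivariant birational morphisms of invariant subschemes are again invariant, so is the centre of our blow-up. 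The universal property of blow-ups then yields a unique $G[n]$-action on $X[n]$ making the blow-up equivariant. Compatibility with $X[n]\to X\times_C C[n]$ follows by composing with the equivariant partial resolution, closing the induction.

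For uniqueness, note that the blow-up morphism $X[n]\to X[n-1]\times_{\AA^1}\AA^2$ is an isomorphism over a dense $G[n]$-invariant open subset. Any two $G[n]$-actions on $X[n]$ compatible with the blow-up agree on the preimage of this open set; by separatedness of the action morphism $G[n]\times X[n]\to X[n]\times X[n]$ they must agree globally. The main obstacle to watch for is verifying equivariance of the partial resolution before taking the blow-up, but this is automatic from the inductive hypothesis together with the fact that forming $(-)\times_{\AA^1}\AA^2$ is the same as base change along $m$, under which the $\Gm^2$-action covers the $\Gm$-action on the target.
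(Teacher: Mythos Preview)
Your proof is correct and follows essentially the same route as the paper's: both reduce to showing that the blow-up centre (the strict transform of $Y_1\times V(t_{n+1})$) is $G[n]$-invariant, whence the action lifts uniquely to the blow-up. The paper compresses this into two sentences, whereas you spell out the inductive setup, the reason the centre is invariant, and the density argument for uniqueness; these elaborations are all sound but add nothing new to the strategy.
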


\begin{proof}
	In view of the local nature of Definition \ref{def:expanded} it suffices to
	treat the basic situation in Proposition \ref{prop:two-components}. Then
	$X[n]\to X[n-1]\times_{\AA^1}\AA^2$ is the blow-up along the strict transform
	of $Y_1\times V(t_2)$, which is $G[n]$-invariant. Hence the action lifts
	uniquely.
\end{proof}

\subsubsection{Local equations}\label{sec:localcoordinates}
It is useful to have explicit equations for $X[n]$ in the case of the local
model
\begin{equation}\label{eq:etale-local}
	X=\Spec k[x,y,z,\dots] \xrightarrow{t=xy} C=\Spec k[t].
\end{equation}
Consider the product
\begin{equation*}
	(X\times \AA^{n+1}) \times \left(\PP^1\right)^n
	= \Spec k[x,y,z,\dots, t_1,\dots,t_{n+1}]\times\prod_i \Proj k[u_i, v_i]
\end{equation*}
and its subvariety $(X\times_{\AA^1} \AA^{n+1}) \times \left(\PP^1\right)^n$
defined by $xy=t_1t_2\cdots t_{n+1}$.

\begin{proposition}[{Wu \cite[\S 4.2]{wu-2007}}]\label{prop:localeq}
	Let $X\to C$ be the simple degeneration \eqref{eq:etale-local}, with dual
	graph oriented as $[V(y)] \to [V(x)]$. Then
	\begin{itemize}
		\item[(i)]
			$X[n]$ is the subvariety of $(X\times_{\AA^1} \AA^{n+1}) \times
			\left(\PP^1\right)^n$ defined by the equations
			\begin{align*}
			&u_1 x = v_1 t_1 \\
			&u_i v_{i-1} = v_i u_{i-1} t_i & &\left(1<i\le n\right) \\
			&v_n y = u_n t_{n+1}.
			\end{align*}
		\item[(ii)]
			The $G[n]$-action on $X[n] \subset (X\times_{\AA^1} \AA^{n+1}) \times
			(\PP^1)^n$ is the restriction of the action which is trivial on $X$,
			given by
			\begin{equation*}
				(t_1, \dots, t_{n+1}) \overset{\sigma}{\mapsto}
				(\sigma_1t_1,\dots,\sigma_{n+1}t_{n+1})
			\end{equation*}
			on $\AA^{n+1}$, and given by
			\begin{equation*}
				(u_i : v_i) \overset{\sigma}{\mapsto}
				(\sigma_1\sigma_2\cdots\sigma_i u_i:v_i)
			\end{equation*}
			on the $i$'th copy of $\PP^1$.
	\end{itemize}
\end{proposition}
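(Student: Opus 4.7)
\emph{Strategy.} I would prove (i) and (ii) together by induction on $n$, leveraging the recursion of Proposition \ref{prop:two-components}. Let $Z_n \subset (X \times_{\AA^1} \AA^{n+1}) \times (\PP^1)^n$ denote the subscheme cut out by the equations in (i); the goal is to identify $Z_n$ with $X[n]$, together with its $G[n]$-action.

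\emph{Base case.} For $n = 1$ the statement is essentially contained in the proof of Proposition \ref{prop:two-components}: with the orientation $[V(y)] \to [V(x)]$, so $Y_1 = V(y)$, the blow-up of $X \times_{\AA^1} \AA^2$ along $V(y, t_2)$ is presented there by the explicit equations $u x = v t_1$ and $v y = u t_2$, matching the defining equations of $Z_1$.

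\emph{Inductive step for (i).} Assume $Z_{n-1} = X[n-1]$ in the local model. Pulling back along $m\colon \AA^2 \to \AA^1$ presents $X[n-1] \times_{\AA^1} \AA^2$ by the equations of $Z_{n-1}$ with the variable ``$t_n$'' replaced by the product $t_n t_{n+1}$; in particular, the last equation becomes $v_{n-1} y = u_{n-1} t_n t_{n+1}$. By Proposition \ref{prop:two-components}, $X[n]$ is the blow-up along the strict transform of $Y_1 \times V(t_{n+1}) = V(y, t_{n+1})$. A local computation on the chart $v_{n-1} \neq 0$ identifies this strict transform with the Weil divisor $V(u_{n-1}, t_{n+1})$, and then applying the $n=1$ calculation to the local degeneration $X[n-1] \to \AA^1$ induced by the coordinate $t_n$ introduces a new $\PP^1$-factor $(u_n : v_n)$ subject to the two equations $u_n v_{n-1} = v_n u_{n-1} t_n$ and $v_n y = u_n t_{n+1}$. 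Adjoining these to the equations inherited from $X[n-1] \times_{\AA^1} \AA^2$ yields precisely the defining equations of $Z_n$.

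\emph{Part (ii).} A direct substitution shows that the prescribed action preserves each of the three families of equations in (i): the first two families scale uniformly, with both sides acquiring a factor of $\sigma_1 \cdots \sigma_i$, while the last equation $v_n y = u_n t_{n+1}$ gains a factor of $\sigma_1 \sigma_2 \cdots \sigma_{n+1} = 1$ on the right-hand side thanks to the $\SL(n+1)$ condition. Since this candidate action manifestly lifts the natural $G[n]$-action on the base $X \times_{\AA^1} \AA^{n+1}$, the uniqueness assertion of the preceding proposition identifies it with the $G[n]$-action on $X[n]$.

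\emph{Main obstacle.} The delicate point is the local analysis of the strict transform in the inductive step: one must verify that the scheme-theoretic preimage of $V(y, t_{n+1})$ in $X[n-1] \times_{\AA^1} \AA^2$ decomposes into the strict transform together with components supported in the exceptional locus of the small resolution, and that blowing up the strict transform produces a $\PP^1$-factor governed by exactly the two new equations above. The bookkeeping is prone to sign and indexing errors, and the cleanest organization is to choose a chart near the last node on which the relevant equations reduce to the $n = 1$ case already handled in Proposition \ref{prop:two-components}.
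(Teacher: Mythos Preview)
Your proposal is correct and follows exactly the approach the paper indicates: the paper's entire proof is the single sentence ``This is straight forward to verify from the construction in Proposition \ref{prop:two-components},'' and your inductive argument simply spells out that verification. Your treatment of (ii) via preservation of the equations together with the uniqueness clause of the preceding proposition is the natural way to fill in the details.
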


This is straight forward to verify from the construction in Proposition
\ref{prop:two-components}.

\begin{remark}\label{rem:tau-coord}
	There is an isomorphism:
	\begin{align*}
		\Gm^n  &\iso G[n] \subset \Gm^{n+1}\\
		(\tau_1,\tau_2,\dots,\tau_n) &\mapsto (\tau_1, \tau_1^{-1} \tau_2,
		\tau_2^{-1} \tau_3, \dots, \tau_{n-1}^{-1}\tau_n, \tau_n^{-1})
	\end{align*}
	In $\tau$-coordinates, the action on the $\PP^1$-factors above is
	conveniently written as $(u_i:v_i) \mapsto (\tau_i u_i: v_i)$. (Note that Li
	\cite{li-2001} writes $\overline{\sigma}$ for our $\sigma$, and $\sigma$ for
	our $\tau$.)
\end{remark}

Let $(u_0:v_0) = (1:x)$ and $(u_{n+1}:v_{n+1})=(y:1)$, so that the equations in
Proposition \ref{prop:localeq} can be written uniformly as
\begin{equation}\label{eq:localeq}
	u_i v_{i-1} = v_i u_{i-1} t_i,\qquad \left(1\le i\le n+1\right).
\end{equation}
These local equations immediately lead to the explicit affine open cover given
by Li \cite[Lemma 1.2]{li-2001}: we have $X[n]=\Union_{k=1}^{n+1} W_k$ , where
\begin{equation*}
	W_k\colon
	\begin{cases}
		u_i\ne 0 & \text{for $i<k$}\\
		v_i\ne 0 & \text{for $i\ge k$.}
	\end{cases}
\end{equation*}
In each chart $W_k$, most of the equations \eqref{eq:localeq} result in
elimination of either $u_i/v_i$ or $v_i/u_i$, so that $W_k$ has coordinates
\begin{equation*}
	t_1,\dots, t_{n+1}, \tfrac{v_{k-1}}{u_{k-1}}, \tfrac{u_k}{v_k}
\end{equation*}
(together with $z, \dots$) subject to the single relation $t_k =
\tfrac{u_k}{v_k} \tfrac{v_{k-1}}{u_{k-1}}$. Each $W_k$ is clearly
$G[n]$-invariant, and the $G[n]$-action is
\begin{align*}
	(t_1, \dots, t_n) \overset{\sigma}{\mapsto} (\sigma_1t_1,\dots,\sigma_nt_n)\\
	\left(\tfrac{v_{k-1}}{u_{k-1}}, \tfrac{u_k}{v_k}\right) \overset{\sigma}{\mapsto}
	\left(\tau_{k-1}^{-1}\tfrac{v_{k-1}}{u_{k-1}}, \tau_k\tfrac{u_k}{v_k}\right)
\end{align*}
on points; here $\tau_i = \sigma_1\cdots\sigma_i$ as in Remark
\ref{rem:tau-coord}.

\subsection{Projectivity criteria}\label{sec:projectivity}

\subsubsection{Preliminaries}

In the arguments that follow, we shall frequently make use of the exactness of
\begin{equation*}
	{\textstyle\bigoplus_i} \ZZ D_i \to \Pic(Y)
	\to \Pic(Y\setminus\union_i D_i) \to 0
\end{equation*}
whenever $\{D_i\}$ is a finite set of effective prime divisors in a nonsingular
variety $Y$ (see e.g.\ \cite[Prop.~II.6.5]{hartshorne-1977}). Whenever $P\to Y$
is a principal $\Gm$-bundle, with associated line bundle $L$, we also have an
exact sequence
\begin{equation*}
	\ZZ \xrightarrow{c_1(L)} \Pic(Y)\to \Pic(P) \to 0.
\end{equation*}
This follows by applying the first short exact sequence to the line bundle $L$
and its $0$-section $H_0$, together with the fact that pullback defines an
isomorphism $\Pic(Y) \to \Pic(L)$ which identifies $c_1(L)$ with $H_0$.

\subsubsection{Criterion for $X[n]$ to be a scheme.}\label{sec:schemeness}

Let $X\to C$ be a non-strict simple degeneration. Thus the central fibre $X_0$
contains at least one singular (``self intersecting'') component $Y$. Let
$D\subset Y$ be a component of the singular locus of such a component.

We can no longer apply Proposition \ref{prop:two-components} Zariski locally
around $D$, but we still can do so in an \'etale local sense provided we can
distinguish \'etale locally around $D$ between the two branches of $Y$ meeting
there. This blends well with Li's use of the normalization morphism $\nu\colon
\widetilde{X}_0 \to X_0$ explained in Remark \ref{rem:normalization}: we define
a non-strict simple degeneration as in Definition \ref{def:simple}, but replace
strictness by the following condition on each component $D$ of the singular
locus of $X_0$:
\begin{quote}
	Assume the preimage $\nu^{-1}(D)$ is a disjoint union of two
	copies of $D$.
\end{quote}
Under this condition Definition \ref{def:expanded} of $X[n]\to C[n]$ applies to
non-strict simple degenerations when the word ``locally'' is interpreted in the
\'etale topology.

We refrain from giving further details as our aim here is just to point out that
in the non-strict situation $X[n]$ is an algebraic space but never a scheme.

\begin{proposition}\label{prop:scheme-criterion}
	Let $X\to C$ be a simple degeneration where $X$ is a scheme. Then, for all
	$n>0$, the algebraic space $X[n]$ is a scheme if and only if $X \to C$ is
	strict, i.e.\  if and only if the graph $\Gamma(X_0)$ contains no loops.
\end{proposition}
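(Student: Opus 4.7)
The plan is to handle the two implications separately, both by reducing to the \'etale-local model of Proposition~\ref{prop:localeq}.

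\emph{Strict implies scheme.} I would proceed by induction on $n$. The base $X[0]=X$ is a scheme by hypothesis. For the inductive step, Proposition~\ref{prop:two-components} together with the Zariski-local gluing in Definition~\ref{def:expanded} realises $X[n]$ as the blow-up of the scheme $X[n-1]\times_{\AA^1}\AA^2$ along the strict transform of $Y_1\times V(t_{n+1})$. In the strict case the component $Y_1$ distinguished by the orientation of $\Gamma$ is a Zariski-globally defined smooth closed subscheme of $X_0$, so the blow-up centre is globally defined and the blow-up of a scheme along a closed subscheme is a scheme. The Zariski-local pieces $U[n]$ of Definition~\ref{def:expanded} then glue to a global scheme, because on any overlap each is an open subscheme of the scheme $X\times_C C[n]$ away from its respective exceptional locus.

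\emph{Non-strict implies not a scheme.} First reduce to $n=1$: the closed immersion $C[1]\hookrightarrow C[n]$, $(c,t_1,t_2)\mapsto(c,t_1,t_2,1,\dots,1)$, pulls $X[n]$ back to $X[1]$. A direct check in the local equations~\eqref{eq:localeq} confirms this: substituting $t_i=1$ for $i\ge 3$ forces $u_i/v_i=y$ for $2\le i\le n$ and collapses the remaining equations to precisely those defining $X[1]$. Hence $X[1]$ is a closed subspace of $X[n]$, and closed subspaces of schemes are schemes, so it suffices to show that $X[1]$ is not a scheme.

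Now assume $\Gamma(X_0)$ contains a loop, coming from a self-intersecting component $Y\subset X_0$ meeting itself along a component $D$ of the singular locus. Pick an \'etale double cover $V\to U$ of a Zariski-open neighbourhood $U\subset X$ of $D$ that separates the two local branches of $Y$ at $D$; then $V\to C$ is strict, $V[1]$ is a scheme by the previous direction, and $X[1]\times_X V\iso V[1]$ equivariantly for the free deck involution $\iota$. The key observation is that any Zariski-globally defined Weil divisor $Z$ on $X\times_C\AA^2$ whose \'etale pullback to $V\times_C\AA^2$ contains the small-resolution centre $V(y,t_2)$ must also contain $V(x,t_2)$, because the Zariski closure in $X$ of the \'etale-local branch $V(y)$ of $X_0$ is the entire self-intersecting component $Y$, whose preimage in $V$ consists \'etale-locally of both branches $V(y)\cup V(x)$. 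Thus no globally Weil-divisorial centre on $X\times_C\AA^2$ can realise the small resolution $X[1]\to X\times_C\AA^2$ as a scheme-theoretic blow-up of $X\times_C\AA^2$.

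\emph{Main obstacle.} The delicate step is upgrading the nonexistence of a global blow-up centre to outright non-schemeness of $X[1]$. Since small resolutions need not be projective over their targets (witness the Atiyah flop), one cannot simply invoke the characterisation of projective birational morphisms of Noetherian schemes as blow-ups of coherent ideal sheaves. I expect the cleanest way around this is a Hironaka-style argument: analyse the $\iota$-action on the $\PP^1$-exceptional fibres of $V[1]\to V\times_C\AA^2$ (each of which carries two distinguished points corresponding to the strict transforms of $V(y)$ and $V(x)$), and use the branch-swap to produce a complete curve or pair of points in $X[1]$ admitting no common affine open neighbourhood, exactly as in the classical construction of smooth proper algebraic spaces that fail to be schemes.
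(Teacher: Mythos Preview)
Your strict-implies-scheme direction and the reduction to $n=1$ are both correct and match the paper. The gap you yourself flag is real: observing that no global Weil divisor on $X\times_C\AA^2$ gives the right blow-up centre does not by itself rule out $X[1]$ being a scheme, and the Hironaka-style argument you gesture at is not carried out. So as it stands the non-strict direction is incomplete.

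The paper closes this gap by a different and shorter route that bypasses blow-up centres and branch-swapping combinatorics entirely. Fix a self-intersection point $P\in Y$ and let $\PP^1\subset X[1]$ be the exceptional fibre over $(P;0,0)$. If $X[1]$ were a scheme, hence a smooth variety, there would exist an effective divisor meeting this $\PP^1$ positively, i.e.\ a line bundle $\sheaf{L}$ on $X[1]$ with $\deg(\sheaf{L}|_{\PP^1})>0$. Now take a Zariski open $U\subset X$ around $P$ meeting no component of $X_0$ other than $Y$, so $U[1]$ is a Zariski neighbourhood of $\PP^1$. The key is a Picard computation: the preimages of the coordinate axes $V(t_i)\subset\AA^2$ are \emph{principal} prime divisors in $U[1]$ (this is precisely where the self-intersection is used---with only the single component $Y$ present, each $t_i$ pulls back to an irreducible function), so excising them leaves $\Pic(U[1])$ unchanged; the complement is a principal $\Gm$-bundle over $U\setminus U_0$, and $U_0$ is itself principal in $U$. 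Chaining these, $\pi^*\colon\Pic(U)\to\Pic(U[1])$ is surjective, so $\sheaf{L}|_{U[1]}$ is pulled back from $U$ and therefore trivial on the contracted $\PP^1$---contradiction. This Picard argument is the missing idea; it is both cleaner and more robust than trying to produce explicit non-separable pairs of points in the style of Hironaka.
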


\begin{proof}
	If $X\to C$ is strict, then $X[n]$ is a scheme by construction: it is
	recursively defined as a resolution $X[n]\to X[n-1]\times_{\AA^1}\AA^2$ given
	Zariski locally on the target by blowing up Weil divisors.

	Conversely let $X\to C$ be non-strict. We reduce to $n=1$: let $\AA^2\to
	\AA^{n+1}$ be the map $(t_1,t_2)\mapsto(t_1,t_2,1,\dots,1)$ and let $C[1]\to
	C[n]$ be the induced map. Then there is a Cartesian diagram (see\
	e.g.~\cite[2.14 and 2.15]{li-2013})
	\begin{equation*}
		\begin{tikzcd}
			X[1] \ar[r] \ar[d] &  X[n] \ar[d] \\
			C[1] \ar[r] & C[n]
		\end{tikzcd}
	\end{equation*}
	so that if $X[1]$ fails to be a scheme, then so does $X[n]$.
	
	As $X_0$ is non-strict there exists a singular component $Y\subset
	X_0$. Fix a singular point $P\in Y$. The inverse image of $(P; 0,0)$ by
	$X[1]\to X\times_{\AA^1}\AA^2$ is a $\PP^1$. If $X[1]$ is a scheme, hence a
	nonsingular variety, then there exists an effective divisor $H\subset X[1]$
	intersecting this $\PP^1$ in a positive number of points. In particular the
	corresponding line bundle $\sheaf{L}=\OO_{X[1]}(H)$ has nontrivial
	restriction to $\PP^1$. Choose a Zariski open neighbourhood $U\subset X$ of
	$P$, such that $U$  does not intersect any other component of $X_0$ besides
	$Y$. Then $U[1]$ is a Zariski open neighbourhood of $\PP^1\subset X[1]$. The
	inverse image by $U[1]\to \AA^2$ of each of the coordinate axes $V(t_i)
	\subset \AA^2$ is a principal prime divisor $D_i \subset U[1]$. Thus
	$\Pic(U[1]) \iso \Pic(U[1]\setminus(D_1\union D_2))$ by restriction. Also,
	the fibre $U_0\subset U$ over $0\in \AA^1$ is a principal prime divisor, so
	$\Pic(U) \iso \Pic(U\setminus U_0)$ by restriction. But $U[1]\setminus
	D_1\union D_2$ is a principal $\Gm$-bundle over $U\setminus U_0$, being a
	pullback of the multiplication map $\AA^2\setminus \{t_1t_2=0\} \to
	\AA^1\setminus \{0\}$. Thus pullback $\pi^*\colon \Pic(U) \to \Pic(U[1])$ is
	surjective, and thus there is a line bundle $\sheaf{M}$ on $U$ such that
	$\pi^*(\sheaf{M})\iso\res{\sheaf{L}}{U[1]}$. But the restriction of
	$\pi^*(\sheaf{M})$ to $\PP^1$ is trivial.
\end{proof}

\subsubsection{Criterion for $X[n]\to C[n]$ to be projective}

\begin{proposition}\label{prop:projectivity-criterion}
	Let $X\to C$ be a projective strict simple degeneration with oriented dual
	graph $\Gamma(X_0)$. Then, for each $n>0$, the morphism $X[n]\to C[n]$ is
	projective if and only if $\Gamma(X_0)$ contains no directed cycles.
\end{proposition}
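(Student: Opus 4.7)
My plan is to reduce to showing projectivity of the small resolution $X[n] \to X\times_C C[n]$, since the projection $X\times_C C[n]\to C[n]$ is projective as a base change of $X\to C$. The fibres of the small resolution are chains of $\PP^1$'s, and over a point $(P;0,\ldots,0)$ with $P\in D_e$ lying on an edge $e$ of $\Gamma(X_0)$, the fibre is a chain of $n$ rulings $\ell^e_1,\ldots,\ell^e_n$, one for each iterated blow-up in Proposition \ref{prop:two-components}. A line bundle $\sheaf L$ is ample relative to the small resolution if and only if $\sheaf L\cdot\ell^e_j>0$ for every oriented edge $e$ and every $j\in\{1,\ldots,n\}$; shorter chains appearing over less degenerate strata are degenerations of the $\ell^e_j$'s, so their classes are non-negative sums of $[\ell^e_j]$'s and positivity on the $\ell^e_j$'s suffices.

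The core computation determines the intersection numbers with the strict transforms $\tilde Y_v^{(i)}$ of the Weil divisors $Y_v\times V(t_i)\subset X\times_C C[n]$. Working in the Li charts $W_k$ of Section \ref{sec:localcoordinates}, one verifies that for an oriented edge $e=(v_1\to v_2)$,
\begin{equation*}
\tilde Y_{v_1}^{(i)}\cdot \ell^e_j = \delta_{i,j}-\delta_{i,j+1}, \qquad
\tilde Y_{v_2}^{(i)}\cdot \ell^e_j = \delta_{i,j+1}-\delta_{i,j},
\end{equation*}
and $\tilde Y_v^{(i)}\cdot\ell^e_j = 0$ if $v$ is not an endpoint of $e$. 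This is proved in the pair of charts $W_j, W_{j+1}$, which together cover $\ell^e_j$, by writing down the local equation of $\tilde Y_v^{(i)}$ in each chart and reading off the degree of $\OO_{X[n]}(\tilde Y_v^{(i)})\big|_{\ell^e_j}$ from the transition. As a consistency check, $\sum_i \tilde Y_v^{(i)}$ is pulled back from $X$ and $\sum_v \tilde Y_v^{(i)}$ from $C[n]$, both therefore having zero intersection with the vertical curve $\ell^e_j$.

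For the forward direction, assume $\Gamma(X_0)$ has no directed cycles. A topological sort yields integers $c_v$ with $c_{v_1}>c_{v_2}$ whenever $v_1\to v_2$ is an edge. Setting $a_{v,i}=c_v(n+1-i)$, the line bundle $\sheaf L = \sum_{v,i}a_{v,i}\tilde Y_v^{(i)}$ satisfies $\sheaf L\cdot\ell^e_j = c_{v_1}-c_{v_2} > 0$ for every $e$ and $j$, hence is ample relative to the small resolution. Twisting by a sufficiently large power of a relatively ample line bundle pulled back from $X\times_C C[n]$ then produces one ample for $X[n]\to C[n]$.

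Conversely, given a directed cycle $v_1\to v_2\to\cdots\to v_k\to v_1$ with edges $e_i=(v_i\to v_{i+1})$ (indices modulo $k$), any line bundle $\sheaf L$ on $X[n]$ decomposes numerically as $\sum_{v,i}a_{v,i}\tilde Y_v^{(i)}$ plus pullbacks from $X$ and $C[n]$, which have zero intersection with each $\ell^e_j$. The intersection formula therefore gives
\begin{equation*}
\sheaf L\cdot\ell^{e_i}_j = (a_{v_i,j}-a_{v_i,j+1})-(a_{v_{i+1},j}-a_{v_{i+1},j+1}),
\end{equation*}
and summing around the cycle telescopes to zero. Hence the $\sheaf L\cdot\ell^{e_i}_j$ cannot all be strictly positive, so no relatively ample line bundle exists and $X[n]\to C[n]$ is not projective. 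The main obstacle will be the chart-by-chart intersection calculation, which though tractable requires careful bookkeeping of which strict transforms contribute positive, negative, or zero degree on each $\ell^e_j$.
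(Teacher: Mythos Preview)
Your approach is correct and parallels the paper's closely in the ``cycle $\Rightarrow$ not projective'' direction, but takes a genuinely different route in the ``no cycles $\Rightarrow$ projective'' direction.

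For the forward direction, the paper does not construct an ample line bundle at all. Instead it observes that when $\Gamma(X_0)$ has no directed cycles, the induced partial order on vertices lets one realize the resolution $X[n]\to X[n-1]\times_{\AA^1}\AA^2$ \emph{globally} as a sequence of blow-ups along Weil divisors (in the order dictated by the partial order), and blow-ups are projective. This avoids intersection theory entirely. Your construction $a_{v,i}=c_v(n+1-i)$ is correct and pleasingly explicit, but you then need to justify that positivity on the $\ell^e_j$'s over the deepest stratum suffices for relative ampleness of $\sheaf L$ over $X\times_C C[n]$; the paper's route sidesteps this.

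For the converse, the paper first reduces to $n=1$ via the Cartesian square over $C[1]\to C[n]$, which makes both the intersection numbers and the Picard group computation lighter. Your decomposition ``$\sheaf L = \sum a_{v,i}\tilde Y_v^{(i)}$ plus pullbacks'' is exactly the content of the paper's exact sequence argument showing $\Pic(X[1]) = \langle H_{im}\rangle + \pi^*\Pic(X)$, proved by deleting the $H_{im}$ and recognizing the complement as a principal $\Gm$-bundle over $X\setminus X_0$. You assert this for general $n$ without proof; it is true, but the reduction to $n=1$ is what makes it a one-line application of the standard sequences. The telescoping around the cycle is identical in both arguments---your $(a_{v_i,j}-a_{v_i,j+1})-(a_{v_{i+1},j}-a_{v_{i+1},j+1})$ is the paper's $(n_{j,1}-n_{j,2})+(n_{i,2}-n_{i,1})$.
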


\begin{proof}[Proof that no cycles $\implies$ projective]
	By induction we can assume that $X[n-1]\to C[n-1]$ is projective. Then
	$X[n-1]\times_{C[n-1]} C[n] \to C[n]$ is projective as well. It thus suffices
	to show that the resolution $X[n]$ of $X[n-1]\times_{C[n-1]} C[n] \iso
	X[n-1]\times_{\AA^1}\AA^2$ is \emph{globally} given by blowing up Weil
	divisors and their strict transforms in a certain order. The required order
	will be dictated by the oriented graph $\Gamma(X_0)$.

	As $\Gamma(X_0)$ has no directed cycles, the components $Y$ of $X_0$ are
	partially ordered by the rule $Y \le Y'$ if there is a directed path
	from the node $[Y]$ to $[Y']$ in $\Gamma(X_0)$. We first claim that the
	resolution
	\begin{equation*}
		X[1] \to X\times_{\AA^1}\AA^2
	\end{equation*}
	is the blow-up of all the Weil divisors $Y \times V(t_2)$ and their strict
	transforms, in increasing order with respect to the partial order of the
	components $Y$ just introduced. In fact, if $D\subset Y\cap Y'$ is a
	component corresponding to an arrow in the direction
	\begin{equation*}
		[Y] \xrightarrow{[D]} [Y']
	\end{equation*}
	then the construction of $X[1]$ in Definition \ref{def:expanded} and
	Proposition \ref{prop:two-components} instructs us to blow up along $Y\times
	V(t_2)$ in a neighbourhood of $D\times\{0,0\}$. Thus globally blowing up
	along $Y\times V(t_2)$ has the required effect there. Moreover, having
	resolved the singularity at $D\times \{(0,0)\}$, the strict transform of the
	Weil divisor $Y'\times V(t_2)$ is now Cartier over $D\times_{\AA^1}\AA^2$,
	so a later blow-up in the given partial order has no effect over those loci
	$D\times \{(0,0)\}$ already resolved. Lastly, if $Y$ and $Y'$ are unrelated
	in the partial order they are disjoint, so the blow-up order is irrelevant.
	This proves the claim for $X[1]$.

	$X[1]$ viewed as a family over $\AA^1$ via second projection $\AA^2\to \AA^1$
	is again simple with dual graph $\Gamma(\res{X[1]}{t_2=0})$ canonically
	isomorphic to $\Gamma(X_0)$. More precisely, for $Y$ and $D$ running through
	the components of $X_0$ and the components of its singular locus
	respectively, let $Y^{(1)}$ and $D^{(1)}$ denote the strict transforms of
	$Y\times V(t_2)$ and of $D\times V(t_2)$ by the resolution $X[1]\to
	X\times_{\AA^1}\AA^2$. Then $Y^{(1)}$ are precisely the components of
	$\res{X[1]}{t_2=0}$ and $D^{(1)}$ are precisely the components of its
	singular locus. This follows by applying the \emph{claim} in the proof of
	Proposition \ref{prop:two-components} to Zariski open neighbourhoods $U$
	around $D$. By induction the resolution
	\begin{equation*}
		X[n] = X[n-1][1] \to X[n-1]\times_{\AA^1}\AA^2
	\end{equation*}
	is thus the composition of the blow-ups along the strict transforms of
	$Y^{(n-1)}\times V(t_2)$ in increasing order with respect to the partial
	order of the components $Y$. It is thus projective.
\end{proof}

\begin{proof}[Proof that cycle $\implies$ not projective]
	As in the proof of Proposition \ref{prop:scheme-criterion}, we reduce to
	$n=1$ by pullback along $C[1]\to C[n]$.
	
	Let $X_0=\Union Y_i$ and $D=\Union D_u$ be the decompositions of the special
	fibre and its singular locus into irreducible components. Fix a point $P_u$
	on each $D_u$. The fibre of $X[1]\to X\times_{\AA^1}\AA^2$ over each $(P_u;
	0,0)$ is a $\PP^1$, which we denote $\PP^1_u$. If $X[1]\to C[1]$ is
	projective, there exists a relatively ample line bundle $\sheaf{L}$ on
	$X[1]$. Thus $\sheaf{L}$ restricts to an ample line bundle on the fibres of
	$X[1]\to C[1]$; in particular it restricts to a line bundle of positive
	degree on each $\PP^1_u$.
	
	For $m=1,2$, let $H_{im}\subset X[1]$ be the divisor obtained as strict
	transform of $Y_i\times_{\AA^1} V(t_m)\subset X\times_{\AA^1}\AA^2$. Arguing
	as in the proof of Proposition \ref{prop:scheme-criterion}, we arrive at the
	diagram
	\begin{equation*}
		\begin{tikzcd}
			{\textstyle \bigoplus} \ZZ H_{im} \ar[r] & \Pic(X[1]) \ar[r]
				& \Pic(X[1]\setminus \union H_{im}) \ar[r] & 0\\
			{\textstyle \bigoplus} \ZZ Y_i \ar[r] & \Pic(X) \ar[u, "\pi^*"] \ar[r]
				& \Pic(X\setminus X_0) \ar[r] \ar[u, two heads] & 0
		\end{tikzcd}
	\end{equation*}
	(where the rightmost vertical map is surjective by the principal
	$\Gm$-bundle argument). Hence $\sheaf{L}$ can be written
	$\OO_{X[1]}(\sum_{km} n_{km} H_{km})\tensor \pi^*(\sheaf{M})$ for some line
	bundle $\sheaf{M}$ on $X$. But the restriction of $\pi^*(\sheaf{M})$ to each
	$\PP^1_u$ is trivial, hence also $\sum_{km} n_{km} H_{km}$ has positive
	degree on each $\PP^1_u$. We shall show that this imposes conditions on the
	coefficients $n_{km}$ that are incompatible with the presence of a cycle in
	$\Gamma(X_0)$.
	
	For each $D_u$, there is a corresponding arrow $[Y_j] \to [Y_i]$ in
	$\Gamma(X_0)$. Replace $X$ with a Zariski local neighbourhood of $D_u$ such
	that $X_0$ just consists of the two components $Y_i$ and $Y_j$, with
	irreducible intersection $D_u$. This has the effect of replacing $X[1]$ with
	a Zariski open neighbourhood of $\PP^1_u$. Then $X[1]$ is the blow-up of
	$X\times_{\AA^1}\AA^2$ along the Weil divisor $Y_j\times_{\AA^1} V(t_2)$. One
	can check, e.g.\ by a computation in local coordinates, that the total and
	strict transforms of $Y_j\times V(t_2)$ agree. Hence $H_{j,2}$, viewed as the
	inverse image of the blow-up centre, restricts to $\OO_{\PP^1_u}(-1)$ on
	$\PP^1$. Locally around $\PP^1_u$, the divisors $H_{i,1} + H_{j,1}$,
	$H_{i,2}+H_{j,2}$, $H_{i,1}+H_{i,2}$ and $H_{j,1}+H_{j,2}$ are all principal,
	given by $t_1=0$, $t_2=0$, a local equation for $Y_i$ and a local equation
	for $Y_j$, respectively. So we have
	\begin{align*}
		\OO_{\PP^1_u}(H_{i,1}) &= \OO_{\PP^1_u}(-1) &
		\OO_{\PP^1_u}(H_{i,2}) &= \OO_{\PP^1_u}(1) \\
		\OO_{\PP^1_u}(H_{j,1}) &= \OO_{\PP^1_u}(1) &
		\OO_{\PP^1_u}(H_{j,2}) &= \OO_{\PP^1_u}(-1)
	\end{align*}
	whereas all other $\OO_{\PP^1_u}(H_{k,m})$ are trivial, for $m=1,2$ and $k$
	anything but $i$ and $j$. Thus, the condition for $\sum_{km}n_{km} H_{km}$ to
	have positive degree on $\PP^1_u$ is
	\begin{equation*}
		(n_{j,1} -  n_{j,2}) + (n_{i,2} -n_{i,1})> 0.
	\end{equation*}
	Now label the nodes $[Y_j]$ in a directed loop as $j=1,\dots,r$. Then
	\begin{gather*}
		(n_{1,1} - n_{1,2}) + (n_{2,2} - n_{2,1}) > 0\\
		(n_{2,1} - n_{2,2}) + (n_{3,2} - n_{3,1}) > 0\\
		(n_{3,1} - n_{3,2}) + (n_{4,2} - n_{4,1}) > 0\\
		\vdots\\
		(n_{r,1} - n_{r,2}) + (n_{1,2} - n_{1,1}) > 0
	\end{gather*}
	and the sum of the left hand sides is zero; this is the required
	contradiction.
\end{proof}

\subsubsection{Inversion of orientation}\label{sec:inversion}

The expanded degeneration $X[n] \to C[n]$ depends on a choice of orientation of
the dual graph $\Gamma(X_0)$. We observe that the effect of reversing the
orientation, i.e.\ reversing the direction of all arrows, is only to permute the
coordinates in $C[n]$:

\begin{proposition}\label{prop:inversion}
	Let $X[n] \to C[n]$ and $X[n]'\to C[n]$ be the two expanded degenerations
	associated with opposite orientations of the dual graph $\Gamma(X_0)$.
	Then there is a $G[n]$-equivariant isomorphism $X[n] \iso X[n]'$ covering the
	involution $\rho$ of $C[n] = C\times_{\AA^1}\AA^{n+1}$ induced by
	\begin{equation*}
		\AA^{n+1} \to \AA^{n+1},\quad (t_1,t_2,\dots,t_{n+1})
		\mapsto (t_{n+1},t_n,\dots,t_1).
	\end{equation*}
\end{proposition}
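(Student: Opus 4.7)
The plan is to exhibit the isomorphism in \'etale local coordinates using Proposition~\ref{prop:localeq}, and then to argue that it globalizes because the construction of $X[n]$ is itself local in nature. Working in the local model $X = \Spec k[x,y,z,\dots] \to C$ with $t=xy$: with the orientation $[V(y)] \to [V(x)]$, the variety $X[n]$ is the subvariety of $X \times \AA^{n+1} \times (\PP^1)^n$ cut out by $u_i v_{i-1} = v_i u_{i-1} t_i$ for $1 \le i \le n+1$ with $(u_0{:}v_0)=(1{:}x)$ and $(u_{n+1}{:}v_{n+1})=(y{:}1)$, while the opposite orientation yields $X[n]'$ given by the analogous equations $u_i' v_{i-1}' = v_i' u_{i-1}' t_i$ with $(u_0'{:}v_0')=(1{:}y)$ and $(u_{n+1}'{:}v_{n+1}')=(x{:}1)$.

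I would define the map $\psi\colon X[n] \to X[n]'$ locally by
\begin{equation*}
\bigl(p,(t_1,\dots,t_{n+1}),(u_1{:}v_1),\dots,(u_n{:}v_n)\bigr) \mapsto \bigl(p,(t_{n+1},\dots,t_1),(v_n{:}u_n),\dots,(v_1{:}u_1)\bigr),
\end{equation*}
so that $t_i' = t_{n+2-i}$ and $(u_i'{:}v_i') = (v_{n+1-i}{:}u_{n+1-i})$. A direct substitution with $j = n+2-i$ shows that the defining equation of $X[n]'$ at index $i$ becomes precisely the defining equation of $X[n]$ at index $j$; in particular the two boundary conditions (which encode the orientation via the specific shapes of $(u_0,v_0)$ and $(u_{n+1},v_{n+1})$) get correctly swapped. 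Hence $\psi$ is a well-defined isomorphism of ambient subvarieties covering the involution $\rho$ on $\AA^{n+1}$.

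For $G[n]$-equivariance I would verify, using the explicit $G[n]$-action of Proposition~\ref{prop:localeq}, that $\psi(\sigma \cdot p) = \varphi(\sigma) \cdot \psi(p)$ for the involution $\varphi\colon G[n] \to G[n]$, $(\sigma_1,\dots,\sigma_{n+1}) \mapsto (\sigma_{n+1},\dots,\sigma_1)$. The check on $\AA^{n+1}$ is immediate; on the $i$-th $\PP^1$-factor it amounts to the identity $\sigma_{n+1}\sigma_n\cdots\sigma_{n+2-i} = (\sigma_1\sigma_2\cdots\sigma_{n+1-i})^{-1}$, which is precisely the $G[n] \subset \SL(n+1)$ condition $\prod_k \sigma_k = 1$. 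This is the one point where the definition of $G[n]$ (rather than just $\Gm^{n+1}$) is essential.

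Finally, to globalize: since Definition~\ref{def:expanded} builds $X[n]$ by applying Proposition~\ref{prop:two-components} in an \'etale neighbourhood of each component $D$ of the singular locus of $X_0$, and since a choice of orientation amounts exactly to a labelling of the two branches at each such $D$, the local map $\psi$ above corresponds to flipping this labelling at every $D$ simultaneously. Away from the preimages of the $D \times_C C[n]$ both small resolutions $X[n] \to X\times_C C[n]$ and $X[n]' \to X\times_C C[n]$ are isomorphisms, and $\psi$ there covers $\rho$ trivially; this compatibility on overlaps forces the local isomorphisms to glue to the desired global $G[n]$-equivariant isomorphism. The main obstacle is purely the bookkeeping of indices in the equation and equivariance checks; conceptually everything reduces to matching the two local normal forms.
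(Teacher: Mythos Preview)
Your proof is correct and follows essentially the same approach as the paper: reduce to the \'etale local model of Proposition~\ref{prop:localeq} and observe that reversing the orientation simply swaps the roles of $x$ and $y$, so that the local isomorphism is transparent. The paper frames the global step slightly differently---it first notes that there is a \emph{unique} birational map $\phi$ over $1_X\times\rho$ (since both $X[n]$ and $X[n]'$ are small resolutions of $X\times_C C[n]$), so that biregularity is the only thing to check and this is an \'etale local question; this sidesteps any explicit gluing verification. Your explicit equivariance check (twisted by the coordinate-reversal involution on $G[n]$, using $\prod_k\sigma_k=1$) is more detailed than the paper's one-line ``It is clear that $\phi$ is equivariant'', but the content is the same.
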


\begin{proof}
	$X[n]$ and $X[n]'$ are both resolutions of $X\times_C C[n]$, so there is a
	unique birational map $\phi$ making the following diagram commute:
	\begin{equation*}
		\begin{tikzcd}
			X[n] \ar[r, dashed, "\phi"] \ar[d] & X[n]' \ar[d]\\
			X\times_C C[n] \ar[r, "1_X\times \rho"] & X\times_C C[n]
		\end{tikzcd}
	\end{equation*}
	We claim that $\phi$ is in fact biregular. This is an \'etale local claim
	over $X$, so it suffices to verify that $\phi$ is biregular in the situation
	of the local equations in Proposition \ref{prop:localeq} (i). This is
	immediate, since reversal of the orientation amounts to interchanging the
	roles of $x$ and $y$ in these equations. It is clear that $\phi$ is
	equivariant.
\end{proof}

\subsection{The fibres of $X[n] \to C[n]$}\label{sec:fibres}

Fix a strict simple degeneration $X\to C$ and an orientation of the dual graph
$\Gamma$. We shall introduce notation describing the fibres of $X[n] \to C[n]$
and how they are smoothed as coordinates in $\AA^{n+1}$ move from zero to
nonzero.

\subsubsection{Expanded graphs}

Let $I\subset [n+1]$ be a subset and let $\AA^{n+1}_I\subset \AA^{n+1}$ be the
locus where all the coordinates $t_i$ vanish for $i\in I$. Let $C[n]_I =
C\times_{\AA^1} \AA^{n+1}_I$ and let $X[n]_I \to C[n]_I$ be the restriction of
$X[n]\to C[n]$. Let $\Gamma$ be the dual graph of $X_0$ equipped with an
orientation.

In view of future applications we will use the following notation. If
$I\subset [n+1]$ is a non-empty subset, then we denote its elements by
\begin{equation*}
I=\{a_1, \ldots, a_r \}, \quad a_1 < a_2 < \ldots  < a_r.
\end{equation*}
We construct an oriented graph $\Gamma_I$
(associated to $\Gamma$) by replacing each arrow
\begin{equation*}
	\bullet \to \bullet
\end{equation*}
in $\Gamma$ with $|I|$ arrows labelled by $I$ in ascending order in the
direction of the arrow:
\begin{equation}\label{eq:quiver-D}
	\bullet \xrightarrow{a_1} \circ \xrightarrow{a_2} \circ \to \cdots
	\xrightarrow{a_r} \bullet
\end{equation}
It is useful to colour the old nodes black and the new ones white --- so the
valence of any white node is $2$, and the valence of any black node is
unchanged from $\Gamma$. Label the black nodes $[Y_I]$, where $[Y]$ is the
corresponding node in $\Gamma$. Label the white nodes $\Delta^{D,a_i}_I$, where
$a_i$ is the incoming arrow and $[D]$ is the corresponding arrow in $\Gamma$.
We frequently suppress $D$ and write $\Delta^{a_i}_I = \Delta^{D,{a_i}}_I$.

\subsubsection{Components of $X[n]_I$}

When $I\subset J$ are two non-empty subsets of $[n+1]$ we may view $\Gamma_I$
as constructed from $\Gamma_J$ by deleting all arrows labelled by $J\setminus
I$, and identifying the nodes at the ends of each deleted arrow. Thus the set of
nodes in $\Gamma_I$ is a quotient of the set of nodes in $\Gamma_J$, and we let
\begin{equation*}
	q = q_{J,I}\colon \Gamma_J \to \Gamma_I
\end{equation*}
denote the quotient map on nodes (it is not defined on arrows, despite the
notation).

\begin{proposition}\label{prop:components}
	Let $X\to C$ be a strict simple degeneration with oriented dual graph
	$\Gamma(X_0)$ and let $I\subset [n+1]$ be non-empty.
	\begin{enumerate}[a)]
		\item
			$X[n]_I$ is a union of nonsingular components with normal crossings
			without triple intersections, i.e.\ it is \'etale locally isomorphic to
			the union of two hyperplanes in affine space. Furthermore, each
			component is flat over $C[n]_I$ and is a simple degeneration over the
			pointed curve $(\AA^1, 0)$, via any coordinate $t_i\colon \AA^{n+1}_I
			\to \AA^1$ for $i\not\in I$.
		\item
			There is a natural isomorphism between $\Gamma_I$ and the dual graph of
			$X[n]_I$, uniquely determined by the following: each (black) node
			$[Y_I]$ in $\Gamma_I$ corresponds to a component $Y_I\subset X[n]_I$
			wich is mapped birationally onto $Y\times_C C[n]_I$ by the natural
			birational map $X[n]\to X\times_C C[n]$, whereas the (white) nodes
			$[\Delta^{D, a_i}_I]$ correspond to components $\Delta^{D,a_i}_I \subset
			X[n]_I$ which are contracted onto $D\times_C C[n]_I$.
		\item
			When $V$ is a component of $X[n]_I$ and $I\subset J$ are non-empty, the
			intersection $V\intsct X[n]_J$ is the union of all components $W$ of
			$X[n]_J$ such that $q([W]) = [V]$.
	\end{enumerate}
\end{proposition}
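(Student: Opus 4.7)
The strategy is to reduce to the basic local model via Definition \ref{def:expanded} and then exploit the explicit local equations of Proposition \ref{prop:localeq} chart-by-chart using the affine cover $\{W_k\}_{k=1}^{n+1}$. All three assertions are étale-local on $X$, so we may restrict to the two-component situation of Proposition \ref{prop:two-components}, where $X_0 = Y_1 \cup Y_2$, $D = Y_1 \intsct Y_2$ is irreducible, and $\Gamma$ consists of a single arrow $[Y_1]\to [Y_2]$. In this setting, each chart $W_k$ has coordinates $t_1,\ldots,t_{n+1}, v_{k-1}/u_{k-1}, u_k/v_k$ subject to the single relation $t_k = (u_k/v_k)(v_{k-1}/u_{k-1})$, and $X[n]_I\intsct W_k$ is cut out by $\{t_i=0 : i\in I\}$. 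Writing $I=\{a_1<\cdots<a_r\}$, if $k\notin I$ these are independent equations in distinct $t$-coordinates, so the slice is smooth and irreducible; if $k=a_j\in I$, the vanishing $t_k=0$ factors the relation into $(u_k/v_k)(v_{k-1}/u_{k-1})=0$, giving exactly two smooth transversal hyperplane components inside $W_k$.

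Next, assemble the chart pieces into global components $V_0,\ldots,V_r$, with the conventions $a_0=0$ and $a_{r+1}=n+2$, by declaring: $V_j\intsct W_k$ equals the whole slice $W_k\intsct X[n]_I$ when $a_j<k<a_{j+1}$; equals $\{v_{k-1}/u_{k-1}=0\}\intsct W_k$ when $k=a_j$ and $j>0$; and equals $\{u_k/v_k=0\}\intsct W_k$ when $k=a_{j+1}$ and $j<r$. These patch on chart overlaps $W_k\intsct W_{k+1}$, where all relevant $u_i,v_i$ are nonzero and no splitting occurs; each $V_j$ is smooth, consecutive pairs $V_{j-1}, V_j$ meet transversally in codimension two along $\{u_{a_j}/v_{a_j} = v_{a_j-1}/u_{a_j-1}=0\}$ in $W_{a_j}$, and non-consecutive components are disjoint. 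This yields the normal-crossings structure with no triple intersections; flatness of each $V_j$ over $C[n]_I$ then follows by miracle flatness, and the simple-degeneration property via any coordinate $t_i$, $i\notin I$, can be read directly from the charts. This proves (a).

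For (b), track the natural birational morphism $X[n]_I \to X\times_C C[n]_I$: the components $V_0,V_r$ are the strict transforms of $Y_1\times_C C[n]_I, Y_2\times_C C[n]_I$ and hence match the two black nodes $[Y_I]$ of $\Gamma_I$, while $V_1,\ldots,V_{r-1}$ each contract onto $D\times_C C[n]_I$ and hence correspond to the white nodes $\Delta^{D,a_j}_I$. The orientation $[V(y)]\to [V(x)]$ of Proposition \ref{prop:localeq} ensures that the chain $V_0\to V_1\to\cdots\to V_r$ matches the incoming-arrow labelling of $\Gamma_I$. For (c), let $I\subset J$; then $X[n]_J$ is cut out of $X[n]_I$ by the extra equations $\{t_i=0 : i\in J\setminus I\}$. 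Restricting to a fixed component $V=V_j$ and a chart $W_k$ with $a_j<k<a_{j+1}$, if $k\in J\setminus I$ the extra equation $t_k=(u_k/v_k)(v_{k-1}/u_{k-1})=0$ again splits $V\intsct W_k$ into two transversal slices; collecting these splittings decomposes $V$ into a chain of components of $X[n]_J$, one per subinterval of $[a_j,a_{j+1}]$ produced by $J$. These are precisely the components of $X[n]_J$ indexed by the nodes of $\Gamma_J$ which collapse to $[V]$ under $q_{J,I}$, by the same incoming-arrow matching. The boundary cases $j=0,r$ and the passage to arbitrary $X$ (since distinct components of the singular locus contribute independently) are analogous.

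The most delicate step is the orientation-dependent bookkeeping: correctly assigning each of the two transversal pieces $\{u_{a_j}/v_{a_j}=0\}$ and $\{v_{a_j-1}/u_{a_j-1}=0\}$ in $W_{a_j}$ to the appropriate neighbouring component $V_{j-1}$ or $V_j$, so that the enumeration $V_0,\ldots,V_r$ matches the oriented chain of $\Gamma_I$. Checking the base case $n=1$, $I=\{1\}$ (where one piece must contain $y=0$ and the other $x=0$) pins down the convention and rules out any left-right swap.
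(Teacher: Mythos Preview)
Your argument is correct, but it proceeds along a different line from the paper's sketch. The paper argues by induction on $n$, using the recursive identity $X[n]=X[n-1][1]$: it verifies the claims directly for $X[1]$ by computing the single blow-up, then for the inductive step establishes a Cartesian diagram identifying $X[n]_I$ (for $I$ not containing $n,n+1$) with a pullback of $X[n-1]_{\overline I}$, and reads off the remaining cases $I\cup\{n\}$, $I\cup\{n+1\}$, $I\cup\{n,n+1\}$ from the $n=1$ analysis applied to the simple degeneration $Y_{\overline I}\to\AA^1$ via $t_n$. Your approach instead bypasses the recursion entirely and works directly in the affine charts $W_k$ of Proposition~\ref{prop:localeq}, identifying each component $V_j$ by the vanishing of the appropriate $u_k/v_k$ or $v_{k-1}/u_{k-1}$ and checking the patching by hand.

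Each route has its virtues. Your chart computation is more explicit and self-contained once Proposition~\ref{prop:localeq} is in hand; indeed, your description of the $V_j$ is exactly the local formula the paper records \emph{after} the proof (in the ``In local coordinates'' paragraph), so you have in effect promoted that formula to the proof itself. The paper's inductive argument, on the other hand, mirrors the recursive construction of $X[n]$ and makes the simple-degeneration statement in part~(a) --- that each component is itself simple over $(\AA^1,0)$ via any $t_i$, $i\notin I$ --- fall out structurally rather than by a separate chart check. One minor point: your invocation of miracle flatness for (a) is fine, but you should note explicitly that the fibre dimension is visibly constant across the charts; the paper sidesteps this by getting flatness from the inductive blow-up description.
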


The Proposition can be seen as a detailed version of \cite[Lemma
2.2]{LW-2011} by Li--Wu and we only sketch a proof.

\begin{figure}
\centering
\input{expanded.pspdftex}
\caption{$X[1]$ over $\AA^2$}
\label{fig:X[1]}
\end{figure}

\begin{proof}
	The base curve $C$ plays no role so we let $C=\AA^1$. Argueing via an
	appropriate Zariski open cover of $X$ it suffices to treat the basic
	situation with central fibre $X_0=Y\cup Y'$ having two irreducible components
	and irreducible intersection $D=Y\cap Y'$. Label the components such that the
	arrow $[D]$ in $\Gamma(X_0)$ points from $[Y']$ to $[Y]$.
	
	Firstly the small resolution $\pi\colon X[1]\to X\times_{\AA^1}\AA^2$ is the
	blow-up along the Weil divisor $Y'\times (\AA^1\times\{0\})$. Let $E\subset
	X[1]$ denote the inverse image of $D\times\{(0,0)\}$. It is straight forward
	to verify the following by computing the blow-up explicitly:
	
	\begin{itemize}
		\item
			The restriction of $X[1]$ to $\{0\}\times\AA^1\subset\AA^2$ is a normal
			crossing union $X[1]_{\{1\}} = Y'_{\{1\}} \union Y_{\{1\}}$, where
			$\pi$ restricts to an isomorphism $Y'_{\{1\}}\to Y'\times
			(\{0\}\times\AA^1)$ and a blow-up $Y_{\{1\}} \to Y\times
			(\{0\}\times\AA^1)$ along $D\times \{(0,0)\}$. The exceptional divisor
			of the blow-up is $E\subset Y_{\{1\}}$. The intersection
			$Y'_{\{1\}}\intsct Y_{\{1\}}$ maps isomorphically to
			$D\times(\{0\}\times \AA^1)$. 
		\item
			The restriction of $X[1]$ to $\AA^1\times \{0\}\subset\AA^2$ is similar
			with the roles of $Y$ and $Y'$ interchanged.
		\item
			The restriction of $X[1]$ to $(0,0)\in \AA^2$ is a normal crossing
			union $X[1]_{\{1,2\}} = Y'_{\{1,2\}} \union \Delta^1_{\{1,2\}} \union
			Y_{\{1,2\}}$, where $\Delta^1_{\{1,2\}} = E$ and $\pi$ restricts to
			isomorphisms $Y'_{\{1,2\}} \to Y'\times \{(0,0)\}$ and $Y_{\{1,2\}} \to
			Y\times \{(0,0)\}$. Via these identifications, $Y'_{\{1,2\}}\intsct E$
			is $D\subset Y'$ and $Y_{\{1,2\}}\intsct E$ is $D\subset Y$, whereas
			$Y'_{\{1,2\}}$ and $Y_{\{1,2\}}$ are disjoint.
	\end{itemize}
	
	The proposition follows for $X[1]$.
	
	Before continuing it is useful to observe that by Proposition
	\ref{prop:inversion} inverting the orientation of $\Gamma(X_0)$ has the same
	effect as interchanging the coordinates on $\AA^2$. Thus $X[1]$ can equally
	well be obtained by blowing up $X\times_{\AA^1}\AA^2$ along $Y\times
	(\{0\}\times\AA^1)$.
	
	Inductively assume the proposition holds for $X[n-1]$. Let $I\subset [n+1]$
	be an index set containing neither $n$ nor $n+1$ and denote by $\overline{I}$
	the same set considered as a subset of $[n]$.
	
	\textit{Claim:} There is a fibre diagram
	\begin{equation*}
		\begin{tikzcd}
			X[n]_I \ar[r, "\pi_I"] & X[n-1]_{\overline{I}}\times_{\AA^1}\AA^2 \\
			Y_I \ar[r]\ar[u, hook] & Y_{\overline{I}}\times_{\AA^1}\AA^2\ar[u,hook]
		\end{tikzcd}
	\end{equation*}
	where
	\begin{itemize}
		\item $\pi_I$ is the restriction of $\pi$ to $\AA^{n+1}_I$
		\item $\pi_I$ is an isomorphism outisde $Y_I$
		\item $Y_I = Y_{\overline{I}}[1]$ viewing $Y_{\overline{I}}$ as a simple
				degeneration over $\AA^1$ via $t_n\colon \AA^n \to \AA^1$ with dual
				graph oriented such that its unique arrow points towards the node
				$[Y_{\overline{I}\union \{n\}}]$.
	\end{itemize}
	To verify the claim, take advantage of the observation preceeding it to view
	$\pi$ as the blow-up of $X[n-1]\times_{\AA^1}\AA^2$ along the Weil divisor
	$Y_{\{n\}} \times (\{0\}\times \AA^1)$. This blow-up centre restricts to
	\begin{equation*}
		\left(Y_{\{n\}}\times(\{0\}\times \AA^1)\right)\intsct
		\left(X[n-1]_{\overline{I}}\times_{\AA^1}\AA^2\right)
		= Y_{\overline{I}\union\{n\}} \times (\{0\}\times \AA^1).
	\end{equation*}
	by part (c) for $X[n-1]$. By a local computation one checks that the total
	and strict transforms of $Y_{\overline{I}}\times_{\AA^1}\AA^2$ agree, so that
	$Y_I$ is in fact its blow-up along
	$Y_{\overline{I}\cup\{n\}}\times(\{0\}\times\AA^1)$. Again this is
	$Y_{\overline{I}}[1]$. This proves the claim.
	
	By the claim one can read off the components of $X[n]_I$ inductively from
	$X[n-1]_{\overline{I}}$. Applying the $n=1$ case to $Y_I =
	Y_{\overline{I}}[1]$ one also obtains the components of
	$X[n]_{I\union\{n\}}$, $X[n]_{I\union\{n+1\}}$ and $X[n]_{I\union\{n,n+1\}}$.
	The verification of the proposition from the claim and the $n=1$ case is then
	reduced to a book keeping exercise we refrain from writing out.
\end{proof}

Figure \ref{fig:X[1]} depicts $X[1]$ over the coordinate axes in $\AA^2$. With
notation as in the proof the uppermost components are $Y'_{\{1\}}$ and
$Y'_{\{2\}}$ whereas the lowermost components are $Y_{\{1\}}$ and $Y_{\{2\}}$.
The fibre over the origin has the additional component $\Delta^1_{\{1,2\}}$.

\subsubsection{In local coordinates}

Consider the \'etale local situation from Section \ref{sec:localcoordinates},
where $X[n]$ has the explicit open affine cover $\{W_k\}$. Let $W_{k,I} = W_k
\intsct X[n]_I$. Let $(i,j)$ be a pair of consecutive elements in $I \union \{0,
n+2\}$. As one immediately verifies, each $\Delta^i_I$ is given by the local
expressions
\begin{align*}
	\Delta^i_I \intsct W_{i,I} &= V\left( \tfrac{v_{i-1}}{u_{i-1}} \right), \\
	\Delta^i_I \intsct W_{k,I} &= W_{k,I} &  &\text{for $i<k<j$}, \\
	\Delta^i_I \intsct W_{j,I} &= V\left( \tfrac{u_j}{v_j} \right).
\end{align*}
These expressions include $Y'_I$ and $Y_I$ for $Y'=V(y)$ and $Y=V(x)$ as the
extremal cases $i=0$ and $i=\max I$.

\subsection{Linearization}\label{sec:linearization}

In this section we shall assume that $X\to C$ is a \emph{projective} simple
degeneration, and we moreover assume that the dual graph $\Gamma(X_0)$ is
equipped with a \emph{bipartite} orientation (see below for a formal
definition). The main aim of this section is to exhibit a particular
$G[n]$-linearized line bundle on $X[n]$, which will then be used for our
application of GIT to the relative Hilbert scheme of $X[n]\to C[n]$ in Section
\ref{sec:GIT-analysis}. The choice of linearization we make is not obvious. The
one we have found has the advantage that it gives a well behaved semi-stable
locus in the Hilbert scheme. The bipartite condition is indeed a crucial
condition as we shall see in Section \ref{sec:GIT-analysis}, in particular
Example \ref{ex:bipartite}.

\subsubsection{\'Etale functoriality}

As preparation, we observe that the construction $X \mapsto X[n]$ is functorial
with respect to \'etale maps.

\begin{proposition}\label{prop:functoriality}
	Let $X\to C$ be a strict simple degeneration and let $f\colon X' \to X$ be an
	\'etale morphism. Orient the dual graphs such that the induced map
	$\Gamma(X'_0) \to \Gamma(X_0)$ is orientation preserving. Then there are
	induced \'etale morphisms $f[n]\colon X'[n] \to X[n]$ for all $n$, making the
	following diagram Cartesian:
	\begin{equation*}
		\begin{tikzcd}
			X'[n] \ar[d] \ar[r, "{f[n]}"] & X[n] \ar[d] \\
			X' \ar[r, "f"] & X.
		\end{tikzcd}
	\end{equation*}
\end{proposition}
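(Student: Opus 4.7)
The plan is to induct on $n$, with base case $n=0$ trivial by setting $f[0]=f$. For the inductive step, assume $f[n-1]\colon X'[n-1] \to X[n-1]$ is étale and fits into a Cartesian square over $f$. The first step is to form the Cartesian square
\[
\begin{tikzcd}
X'[n-1]\times_{\AA^1}\AA^2 \ar[r] \ar[d] & X[n-1]\times_{\AA^1}\AA^2 \ar[d]\\
X'[n-1] \ar[r, "{f[n-1]}"] & X[n-1]
\end{tikzcd}
\]
whose top horizontal map is étale as a base change of $f[n-1]$.

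Next, I would invoke Definition \ref{def:expanded}: étale-locally around each component $D$ of the singular locus of $X_0$, the morphism $X[n]\to X[n-1]\times_{\AA^1}\AA^2$ is the blow-up of the strict transform of $Y_1\times V(t_{n+1})$, where $[Y_1]$ is the source of the arrow $[D]$ in the oriented graph $\Gamma(X_0)$. Because $f$ is étale, each component $D'$ of the singular locus of $X'_0$ maps to a component $D$ of the singular locus of $X_0$, and the orientation-preserving hypothesis guarantees that the source of $[D']$ maps to the source of $[D]$. Consequently the blow-up centres dictated by Definition \ref{def:expanded} for $X'$ are precisely the pullbacks, along the top row of the square above, of those for $X$.

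Since blow-ups commute with flat base change, defining $X'[n]$ by the Cartesian square having $X[n]\to X[n-1]\times_{\AA^1}\AA^2$ on the right produces the same space as applying Definition \ref{def:expanded} directly to $X'$, and the resulting $f[n]$ is étale as a base change of an étale morphism. Pasting this Cartesian square with the previous ones yields the required Cartesian square over $f$.

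The main technical obstacle is the scheme-theoretic identification of blow-up centres: one must verify that the strict transform of $Y_1\times V(t_{n+1})$ in $X[n-1]\times_{\AA^1}\AA^2$ pulls back, as a subscheme, to the strict transform of $Y'_1\times V(t_{n+1})$ where $Y'_1$ is the source component of $[D']$ in $\Gamma(X'_0)$. This is where the orientation-preserving hypothesis is crucial, and its verification rests on the explicit recursive description of the components of $X[n-1]_0$ furnished by Proposition \ref{prop:components}, which tracks precisely which component after $n-1$ steps plays the role of ``source'' for the $n$-th blow-up.
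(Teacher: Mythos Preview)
Your proposal is correct and follows essentially the same approach as the paper: induction on $n$, formation of the fibre product $X'[n-1]\times_{\AA^1}\AA^2$, identification of the blow-up centres via the orientation-preserving hypothesis, and commutation of blow-up with the flat (\'etale) base change. The paper's proof differs only in presentation: rather than invoking Proposition \ref{prop:components} for the identification of centres, it works directly with a Zariski open cover $\{U\}$ of $X$ (respectively $X[n-1]$) and explicitly analyses the component structure of $U' = f^{-1}(U)$, noting in particular that the source components $Y'_{1,i}$ are pairwise disjoint and hence may be blown up simultaneously.
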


\begin{proof}
	Observe that for each component $Y' \subset X'_0$, the image $f(Y')$ is dense
	in some component $Y\subset X_0$. Similarly, if $D'\subset X'_0$ is a
	component of the singular locus, the image $f(D')$ is dense in some component
	$D$ of the singular locus of $X_0$. This defines the map $\Gamma(X'_0) \to
	\Gamma(X_0)$ on vertices and edges, respectively.
	
	Let $D\subset X_0$ be a component of the singular locus and let $U\subset X$
	be a Zariski open neighbourhood of $D$, such that $U_0 = U\intsct X_0$ has
	two components $Y_1$ and $Y_2$ with $D=Y_1\intsct Y_2$.  Order the two
	components such that the arrow $[D]$ in $\Gamma(X_0)$ points from $[Y_1]$ to
	$[Y_2]$. Then $U[1] \to U\times_{\AA^1}\AA^2$ is the blow-up along
	$Y_1\times V(t_2)$.
	
	Let $U' = f^{-1}(U)$. It is a Zariski open subset of $X'$, and $U'_0 = U'
	\intsct X'_0$ has the following structure: it has a number (possibly zero) of
	components $Y'_{1,i}$ mapping to $Y_1$, a number of components $Y'_{2,i}$
	mapping to $Y_2$, the only non-empty intersections are of the form $Y'_{1,i}
	\intsct Y'_{2,i}$, and all components of $Y'_{1,i}\intsct Y'_{2,i}$ map to $D$. We
	abuse notation and write $[Y'_{j,i}]$ for the vertex in $\Gamma(X'_0)$
	corresponding to the closure of $Y'_{j,i}$. Since the map on oriented graphs
	respects the orientation, all arrows in $\Gamma(X'_0)$ point from
	$[Y'_{1,i}]$ to $[Y'_{2,i}]$. Thus $U'[1] \subset X'[1]$ is obtained by
	blowing up all $Y'_{1,i}\times V(t_2) \subset U'\times_{\AA^1} \AA^2$, and
	as the $Y'_{1,i}$'s are disjoint, they may be blown up simultaneously. As
	$f$ is \'etale, and hence flat, blow-up commutes with base change in the
	sense that the diagram
	\begin{equation*}
		\begin{tikzcd}
			U'[1] \ar[r]\ar[d] & U[1] \ar[d]\\
			U'\times_C C[1] \ar[r] & U\times_C C[1]
		\end{tikzcd}
	\end{equation*}
	is Cartesian. The topmost arrow defines $f[1]$ over $U'[1]$. Cover $X$ by
	Zariski open neighbourhoods $U$ of this form to define $f[1]$ everywhere.
	Being a pullback of the \'etale map $f$, the map $f[1]$ is also \'etale.
	
	In view of the recursive construction, the procedure may be repeated: having
	defined the \'etale morphism $f[n-1]\colon X'[n-1] \to X[n-1]$, inducing an
	orientation preserving map on the oriented graphs of the respective fibres
	over $t_n=0$, cover $X[n-1]$ by Zariski opens $U\subset X[n-1]$ as before and
	let $U' = f[n-1]^{-1}(U)$. Let $\widetilde{U}\subset X[n]$ be the inverse
	image of $U\times_{\AA^1}\AA^2$ by the resolution $X[n] \to
	X[n-1]\times_{\AA^1}\AA^2$ and let $\widetilde{U'}\subset X'[n]$ be defined
	analogously. The Cartesian diagram
	\begin{equation*}
		\begin{tikzcd}
			\widetilde{U'} \ar[r]\ar[d] & \widetilde{U}\ar[d] \\
			U'\times_{\AA^1} \AA^2 \ar[r] & U\times_{\AA^1} \AA^2
		\end{tikzcd}
	\end{equation*}
	defines $f[n]$ over $\widetilde{U'}$.
\end{proof}

\begin{remark}\label{rem:induced-orientation}
	In the notation of Proposition \ref{prop:functoriality}, once an orientation
	on $\Gamma(X_0)$ has been chosen, there is a unique orientation on
	$\Gamma(X'_0)$ making the map of graphs $\Gamma(X'_0) \to \Gamma(X_0)$
	orientation preserving.
\end{remark}

\begin{remark}\label{rem:equivariance}
	As is immediate from Proposition \ref{prop:functoriality}, if the morphism $X
	\to C$ carries an action by a group $H$ which respects the orientation on
	$\Gamma(X_0)$, then $X[n]\to C[n]$ inherits an $H$-action. 
\end{remark}

\subsubsection{Bipartite orientations}

Let $X\to C$ be a projective simple degeneration. The following notion will be
crucial for our construction.

\begin{definition}\label{def:bipartite}
	We say that the dual graph $\Gamma(X_0)$ is \emph{bipartite} if its vertex
	set $V$ can be written as a disjoint union $V = V^+ \union V^-$, such that
	there are no edges between any pair of vertices in $V^+$ or in $V^-$. The
	choice of such a decomposition $V = V^+\union V^-$ induces an orientation of
	$\Gamma(X_0)$ with all arrows pointing from $V^-$ to $V^+$. We shall call
	orientations of this form \emph{bipartite}.
\end{definition}

Equivalently, an orientation is bipartite when every vertex is either a source
or a sink. As is well known, a graph can be given a bipartite orientation if and
only if it has no cycles of odd length, and when this holds, and the graph is
connected, there are exactly two bipartite orientations, obtained from one
another by reversing all arrows. Although the assumption that $\Gamma(X_0)$ is
bipartite is a restriction, one can always produce this situation after a
quadratic base change:

\begin{remark}\label{rem:quadraticext}
Up to a quadratic extension in the base, we can always assume that $\Gamma(X_0)$
admits a bipartite orientation. Indeed, let $ C' \to C $ be a base extension
obtained by extracting a square root of a local parameter at $ 0 \in C $, and
let $ D \subset Y \intsct Y' $ be any component of the double locus in $X_0$.
Then $ X \times_C C' $ acquires a transversal $A_1$-singularity, i.e. a cone
over a conic, along $D \times \{0\}$. Blowing up this  $A_1$-singularity yields
a projective simple degeneration $ X' \to C' $, where the inverse image $
\widetilde{Y}_D $ of $D$ in $X'$ is a $\mathbb{P}^1$-bundle intersecting the
strict transforms of $Y$ and $Y'$ in disjoint sections. Now one can orient
$\Gamma(X'_0)$ by letting all edges point towards the exceptional components.
\end{remark}

In a bipartite orientation there are in particular no directed cycles, so by
Proposition \ref{prop:projectivity-criterion}, all $X[n]\to C[n]$ are
projective.

\subsubsection{Embedding in $\PP^1$-bundles}\label{sec:P1-bundles}

As a convenient tool for describing line bundles on $X[n]$ we next exhibit an
embedding of $X[n]$ into a product of $\PP^1$-bundles over
$X\times_{\AA^1}\AA^{n+1}$. This is a globalized version of the local equations
in Proposition \ref{prop:localeq}.

In Proposition \ref{prop:projectivity-criterion} the desingularization
\begin{equation*}
	X[n] \to X[n-1] \times_{\AA^1}\AA^2
\end{equation*}
was shown to be given globally as a sequence of blow-ups: with $Y$ running
through the components of $X_0$ we blow up along the strict transforms of
$Y\times V(t_{n+1}) \subset X\times_{\AA^1} \AA^{n+1}$ using the orientation of
$\Gamma(X_0)$ to determine the blow-up order. Moreover the penultimate blow-up
already resolves all singularities, so the very last blow-up, corresponding to
sinks in $\Gamma(X_0)$, has a Cartier divisor as centre and thus has now effect.
Thus, in the bipartite situation the above desingularization is a single
blow-up: its centre is the strict transform of $Y_{(0)}\times V(t_{n+1}) \subset
X\times_{\AA^1} \AA^{n+1}$ where $Y_{(0)}$ is the disjoint union of all
components in $X_0$ corresponding to source nodes in $\Gamma(X_0)$.  By e.g.\ a
computation in local coordinates one verifies that this strict transform
coincides with the total transform, so the blow-up centre can be written
\begin{equation*}
	p_{n-1}^{-1}(Y_{(0)})\times V(t_{n+1}) \subset X[n-1]\times_{\AA^1}\times\AA^2
\end{equation*}
where we use coordinates $(t_n,t_{n+1})$ on the last factor $\AA^2$ and
\begin{equation*}
	p_{n-1}\colon X[n-1] \to X
\end{equation*}
is the composition of the resolution $X[n-1]\to X\times_{\AA^1}\AA^n$ with the
projection $X\times_{\AA^1}\AA^n\to X$.

We shall realize the blow-up $X[n]\to X[n-1]\times_{\AA^1}\AA^2$ as the strict
transform of $X[n-1]\times_{\AA^1} \AA^2$ under the blow-up of the product
$X[n-1]\times \AA^2$ along $p_{n-1}^{-1}(Y_{(0)}) \times V(t_{n+1})$. Let
$\sheaf{I}\subset\OO_{X[n-1]\times\AA^2}$ be the ideal sheaf of the latter and
define the rank two vector bundle
\begin{equation*}
	\sheaf{E} = \pr_1^*\OO_{X[n-1]}(-p_{n-1}^* Y_{(0)}) \oplus \pr_2^*\OO_{\AA^2}(-V(t_{n+1}))
\end{equation*}
on $X[n-1]\times \AA^2$. Let $y\in H^0(X, \OO_X(Y_{(0)}))$ be a defining
equation for $Y_{(0)}$. The surjection
\begin{equation*}
	\sheaf{E}
	\xrightarrow{
		\begin{pmatrix}
		p_{n-1}^*y\\ t_{n+1}
		\end{pmatrix}
	}
	\sheaf{I}
\end{equation*}
induces a closed embedding $\PP(\sheaf{I}) \subset \PP(\sheaf{E})$ and the
blow-up of $X[n-1]\times\AA^2$ further embeds into $\PP(\sheaf{I})$. Thus the
strict transform $X[n]$ of $j\colon X[n-1]\times_{\AA^1}\AA^2 \into
X[n-1]\times\AA^2$ inherits a closed embedding into $\PP(j^*\sheaf{E})$.
Moreover, let
\begin{equation*}
	\pi_n\colon X[n-1]\times_{\AA^1}\AA^2
	\to (X\times_{\AA^1}\AA^n)\times_{\AA^1}\AA^2 \iso X\times_{\AA^1}\AA^{n+1}
\end{equation*}
be the canonical projection and define the vector bundle
\begin{equation*}
	\sheaf{F}_n = \pr_1^*\OO_X(-Y_{(0)}) \oplus \pr_2^*\OO_{\AA^{n+1}}(-V(t_{n+1}))
\end{equation*}
on $X\times_{\AA^1}\AA^{n+1}$. Then there is a canonical identification
$j^*\sheaf{E} \iso \pi_n^*\sheaf{F}_n$ and thus we have arrived at a closed
embedding
\begin{equation*}
	X[n] \subset \pi_n^*\PP(\sheaf{F}_n)
\end{equation*}
over $X[n-1]\times_{\AA^1} \AA^2$. We claim that, by iteration, we obtain an
embedding
\begin{equation} \label{eq:P1-bundles}
	X[n] \subset \prod_{i=1}^n P_i
\end{equation}
where the product symbol denotes fibred product over $X\times_{\AA^1}\AA^{n+1}$,
and $P_i$ is the $\PP^1$-bundle over $X\times_{\AA^1} \AA^{n+1}$ obtained by
pulling back $\PP(\sheaf{F}_i)$ over the map $X\times_{\AA^1} \AA^{n+1} \to
X\times_{\AA^1}\times\AA^{i+1}$ that multiplies together the last $n+1-i$
coordinates on $\AA^{n+1}$. For $n=1$ there is nothing to prove, and for $n=2$
there is a commutative diagram
\begin{equation*}
	\begin{tikzcd}
		X[2] &[-6ex] \subset &[-6ex] \pi_2^*P_2 \ar[r]\ar[d]
			& X[1]\times_{\AA^1} \AA^2 \ar[d, "\pi_2"] &[-6ex] \subset &[-6ex] P_1\ar[lld] \\
		& & P_2 \ar[r] & X\times_{\AA^1}\AA^3
	\end{tikzcd}
\end{equation*}
where the square is Cartesian. It follows formally from the diagram that there
is an embedding $X[2] \subset P_1\times P_2$, where the product is over
$X\times_{\AA^1}\AA^3$. The general induction step in proving
\eqref{eq:P1-bundles} is similar.

\subsubsection{The linearization}

Consider first the local situation in Proposition \ref{prop:localeq} and let
$\sheaf{L}_0$ be the ample line bundle $\OO_{(\PP^1)^n}(1,\dots,1)$ pulled back
to $X[n]$. We shall write down a particular linearization of the tensor power
$\sheaf{L}_0^{n+1}$. First let $G[n]'$ be a ``second copy'' of the group $G[n]$,
acting on $X[n]$ via the $(n+1)$'st power map $G[n]'\to G[n]$, sending
$\tilde{\tau}\in G[n]'$ to $\tau=\tilde{\tau}^{n+1}$ in $G[n]$. The induced
$G[n]'$-action on the $i$'th factor $\PP^1$ can be lifted to $\AA^2$ in many
ways; we pick the particular lifting that acts on $(u_i, v_i)\in \AA^2$ by
\begin{equation}\label{eq:local-linearization}
	(u_i, v_i) \overset{\tilde{\tau}}
	\mapsto ((\tilde{\tau}_i)^i u_i, \tilde{\tau}_i^{i-(n+1)} v_i),
\end{equation}
using the coordinates in Remark \ref{rem:tau-coord}. We remark that our
preference for this choice is not obvious at this point, but it will lead to a
well behaved GIT stable locus in Section \ref{sec:GIT-analysis}. The lifted
$G[n]'$-action on $(\AA^2)^n$ gives rise to a $G[n]'$-linearization of
$\sheaf{L}_0$. The kernel of $G[n]'\to G[n]$ acts trivially on
$\sheaf{L}_0^{n+1}$, hence we have defined a $G[n]$-linearization on
$\sheaf{L}=\sheaf{L}_0^{n+1}$.

Now we globalize this construction. For the notation in statement (i) in the
following lemma, we refer to Proposition \ref{prop:functoriality} and Remark
\ref{rem:induced-orientation}.

\begin{lemma}\label{Lemma-modlin}
	Let $X\to C$ be a simple degeneration together with a bipartite orientation
	of the dual graph $\Gamma(X_0)$. Then there exists a particular
	$G[n]$-linearized ample line bundle $\sheaf{L}$ on $X[n]$ such that:
	\begin{itemize}
		\item[(i)]
			(Compatibility with \'etale maps:) Let $f\colon X'\to X$ be an \'etale
			map and give $\Gamma(X'_0)$ the orientation induced by the one on
			$\Gamma(X_0)$. Then $\Gamma(X'_0)$ is bipartite, and if $\sheaf{L}'$
			denotes the corresponding $G[n]$-linearized ample line bundle on
			$X'[n]$, then $\sheaf{L}'$ is isomorphic to the pullback of $\sheaf{L}$
			along $f[n]\colon X'[n] \to X[n]$.
		\item[(ii)]
			(Local description:) In the local situation of Proposition
			\ref{prop:localeq}, the line bundle $\sheaf{L}$ is the $(n+1)$'st power
			of $\OO_{(\PP^1)^n}(1,\dots,1)$ pulled back to $X[n]$, with
			$G[n]$-linearization given by \eqref{eq:local-linearization} as above.
	\end{itemize}
\end{lemma}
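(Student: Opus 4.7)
The plan is to globalize the local formula in (ii) by means of the embedding
\begin{equation*}
	X[n] \subset \prod_{i=1}^n P_i
\end{equation*}
exhibited in Section \ref{sec:P1-bundles}, where each $P_i$ is the pullback to $X\times_{\AA^1}\AA^{n+1}$ of the $\PP^1$-bundle $\PP(\sheaf{F}_i)$ with
\begin{equation*}
	\sheaf{F}_i = \pr_1^*\OO_X(-Y_{(0)}) \oplus \pr_2^*\OO_{\AA^{i+1}}(-V(t_{i+1})).
\end{equation*}
The bipartite hypothesis enters here essentially: it is precisely the condition under which every component of $X_0$ is unambiguously either a source or a sink, so that the reduced divisor $Y_{(0)}\subset X_0$ of source components, and hence each $\sheaf{F}_i$, is globally defined. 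Without bipartiteness the construction would be available only \'etale locally. I would then define
\begin{equation*}
	\sheaf{L} = \res{\Bigl(\bigotimes_{i=1}^n \OO_{P_i}(1)\Bigr)^{\otimes (n+1)}}{X[n]},
\end{equation*}
the tensor product being computed on the fibred product $\prod_i P_i$ over $X\times_{\AA^1}\AA^{n+1}$. Relative ampleness of $\sheaf{L}$ over $X\times_{\AA^1}\AA^{n+1}$ is inherited from the tautological quotients $\OO_{P_i}(1)$; combined with the projectivity of $X\to C$ this gives relative ampleness over $C[n]$, which is all that is needed for the ensuing GIT analysis.

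Next I would put the linearization on $\sheaf{L}$. I linearize $\sheaf{F}_i$ with respect to the second copy $G[n]'$ (which acts on $X[n]$ through the $(n+1)$'st power map $G[n]'\to G[n]$, $\tilde{\tau}\mapsto \tilde{\tau}^{n+1}$) by assigning weight $\tilde{\tau}_i^{i}$ to the first summand $\pr_1^*\OO_X(-Y_{(0)})$ and weight $\tilde{\tau}_i^{i-(n+1)}$ to the second summand $\pr_2^*\OO_{\AA^{i+1}}(-V(t_{i+1}))$, using the $\tau$-coordinates of Remark \ref{rem:tau-coord}. This induces a $G[n]'$-linearization on the tautological quotient $\OO_{P_i}(1)$ and hence on $\bigotimes_i \OO_{P_i}(1)$. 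The kernel of $G[n]'\to G[n]$ acts on each $\OO_{P_i}(1)$ through a character whose $(n+1)$'st power is trivial, so after passing to the $(n+1)$'st tensor power the linearization descends to a genuine $G[n]$-linearization on $\sheaf{L}$.

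To verify (ii), I restrict to the \'etale local setting of Proposition \ref{prop:localeq}, where $X$ is affine, $Y_{(0)}=V(y)$, and the embedding $X[n]\subset \prod P_i$ is cut out by the equations \eqref{eq:localeq} (with $y$ and $t_{i+1}$ providing the matrix defining $\sheaf{F}_i \twoheadrightarrow \sheaf{I}$). The tautological sections of the two summands of $\sheaf{F}_i$ translate to the homogeneous coordinates $u_i$ and $v_i$, and a direct check shows they transform under $G[n]'$ with weights $\tilde{\tau}_i^i$ and $\tilde{\tau}_i^{i-(n+1)}$ respectively, which is exactly \eqref{eq:local-linearization}. For the \'etale functoriality (i), the induced orientation on $\Gamma(X'_0)$ (Remark \ref{rem:induced-orientation}) is bipartite and satisfies $Y'_{(0)} = f^{-1}(Y_{(0)})$, because an \'etale morphism sends source components to source components. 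Consequently $\sheaf{F}'_i \iso f[n]^*\sheaf{F}_i$ as $G[n]'$-linearized bundles, Proposition \ref{prop:functoriality} gives that the embedding of $X'[n]$ is the pullback of that of $X[n]$, and hence $\sheaf{L}'$ is canonically isomorphic to $f[n]^*\sheaf{L}$ as a $G[n]$-linearized line bundle.

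The main technical obstacle I anticipate is the careful weight bookkeeping: one has to reconcile Grothendieck's contravariant convention for $\PP(\sheaf{F}_i)$, the lift of the $G[n]$-action on $\PP^1$ to the affine cone $\AA^2$, and the descent from $G[n]'$ to $G[n]$ via the $(n+1)$'st power, simultaneously, so that the globally constructed linearization really restricts to \eqref{eq:local-linearization} with the precise exponents $i$ and $i-(n+1)$. Once this bookkeeping is pinned down the rest of the argument is formal.
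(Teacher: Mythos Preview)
Your proposal is correct and follows essentially the same route as the paper: construct $\sheaf{L}$ via the global embedding $X[n]\subset\prod_i P_i$ of Section~\ref{sec:P1-bundles}, linearize each $\sheaf{F}_i$ for $G[n]'$, tensor, take the $(n+1)$'st power to descend to $G[n]$, and verify (i) via $Y'_{(0)}=f^{-1}(Y_{(0)})$ and Proposition~\ref{prop:functoriality}. The one presentational difference worth noting is that the paper builds the $G[n]'$-linearization on $\sheaf{F}_i$ in two steps --- first the \emph{canonical} $G[n]$-linearization (trivial on $\pr_1^*\OO_X(-Y_{(0)})$, and on the second summand the one coming from the $G[n]$-invariance of $V(t_{i+1})$, which locally reads $(u_i,v_i)\mapsto(u_i,\tau_i^{-1}v_i)$), then a uniform scalar twist by the character $\tilde{\tau}_i^{\,i}$ on all of $\sheaf{F}_i$ --- whereas you write down the resulting weights $\tilde{\tau}_i^{\,i}$ and $\tilde{\tau}_i^{\,i-(n+1)}$ on the two summands directly. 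The paper's decomposition makes it immediate that the linearization is globally well defined (an equivariant sheaf twisted by a global character is again equivariant), which is exactly the ``weight bookkeeping'' issue you flag at the end; you may want to phrase your construction that way to avoid any ambiguity about what ``assigning a weight to a summand'' means over a base with nontrivial $G[n]$-action.
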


\begin{proof}
	We use the notation from Section \ref{sec:P1-bundles}.
	Consider the following diagram:
	\begin{equation*}
		\begin{tikzcd}
			X[n] &[-6ex] \subset &[-6ex] \tprod_{i=1}^n P_i \ar[r, "\pr_i"]\ar[rd]
				& P_i \ar[r]\ar[d] & \PP(\sheaf{F}_i) \ar[d] \\
			&&& X\times_{\AA^1}\AA^{n+1} \ar[r] & X\times_{\AA^1}\AA^{i+1}
		\end{tikzcd}
	\end{equation*}
	The rightmost horizontal arrows are $G[n]$-equivariant when we let $G[n]$ act
	on the objects to the right via the projection
	\begin{equation}\label{eq:G-projection}
		G[n] \to G[i], \quad
		(\tau_1,\dots,\tau_i, \dots, \tau_n) \mapsto (\tau_1,\dots, \tau_i),
	\end{equation}
	where we use the coordinates in Remark \ref{rem:tau-coord}.

	For each $i\le n$, the divisor $V(t_{i+1}) \subset \AA^{i+1}$ is invariant
	under the $G[i]$-action, and hence under the $G[n]$-action via
	\eqref{eq:G-projection}. Hence the locally free sheaf
	\begin{equation*}
		\sheaf{F}_i = \pr_1^*\OO_X(-Y_{(0)}) \oplus \pr_2^*\OO_{\AA^{n+1}}(-V(t_{i+1}))
	\end{equation*}
	on $X\times_{\AA^1}\AA^{n+1}$ has a canonical $G[n]$-linearization (trivial
	on the first summand). The induced $G[n]$-action on $\prod_i P_i$ is
	compatible with the action on $X[n]$.

	Since $\sheaf{F}_i$ itself is $G[n]$-linearized, the $G[n]$-action on
	$\PP(\sheaf{F}_i)$ lifts to the geometric vector bundle $\VV(\sheaf{F}_i)$,
	and hence comes with a canonical linearization with underlying line bundle
	$\OO_{\PP(\sheaf{F}_i)}(1)$. In the local situation of Proposition
	\ref{prop:localeq}, the lifted action can be checked to be given in the
	fibres by
	\begin{equation*}
		(u_i, v_i) \overset{\tau}{\mapsto} (u_i, \tau_i^{-1} v_i).
	\end{equation*}
	Guided by equation \eqref{eq:local-linearization} we thus pick the
	$G[n]'$-action (where again $G[n]' \to G[n]$ is the $(n+1)$'st power map) on
	$\VV(\sheaf{F}_i)$ given by the canonical action via $G[n]$ followed by
	scalar multiplication in the fibres of the vector bundle $\VV(\sheaf{F}_i)$
	by the factor $\tilde{\tau}_i^i$. This induces the required
	$G[n]$-linearization of $\OO(n+1)$ on $\PP(\sheaf{F}_i)$ for each $i$. Pull
	these back to $\prod_{i=1}^n P_i$ and form their tensor product. Restrict to
	$X[n]$ to obtain the required linearized line bundle $\sheaf{L}$. It fulfills
	(ii) by construction.

	For the compatibility with \'etale maps $f\colon X'\to X$ note that the
	union $Y'_{(0)}$ of components in $X'_0$ being source nodes in
	$\Gamma(X_0')$ is precisely the inverse image of the corresponding union
	$Y_{(0)}$ in $X_0$. Thus the vector bundles $\sheaf{F}_i$ over
	$X\times_{\AA^1}\AA^{i+1}$ pull back to the corresponding bundles
	$\sheaf{F}'_i$ over $X'\times_{\AA^1}\AA^{i+1}$ and so the product $\prod_i
	P_i$ of $\PP^1$-bundles over $X\times_{\AA^1}\AA^{n+1}$ pulls back to the
	corresponding product $\prod_i P'_i$ over $X'\times_{\AA^1}\AA^{n+1}$. It is
	thus enough to check that the embeddings of $X[n]'$ and $X[n]$ in their
	respective product bundles are compatible. We leave the details to the
	reader.
\end{proof}

\subsubsection{Hilbert--Mumford invariants}\label{subsubsec-HMinvariants}

We first recall the definition of the Hilbert--Mumford invariants. Let $G$
denote a linearly reductive group over $k$, which acts on a quasi-projective
$k$-scheme $Y$. Assume moreover that we are given an ample $G$-linearized
invertible sheaf $\mathcal{P}$ on $Y$. Let $\lambda \colon \Gm \to G$ be a
one-parameter subgroup (for short $1$-PS) of $G$ and $ y \in Y $ a point. If
the limit $y_0$ of $y$ as $\tau \in \Gm $ tends to zero exists in $Y$, then
$y_0$ is a $\Gm$-fixed point and we define the value
$\mu^{\mathcal{P}}(\lambda,y)$ to be the negative of the $\Gm$-weight on the
fibre $\mathcal{P}(y_0)$. Otherwise, we put $\mu^{\mathcal{P}}(\lambda,y) =
\infty$.

As preparation for the application of GIT in Section \ref{sec:GIT-analysis}, we
shall compute the Hilbert--Mumford invariants
$\mu^{\sheaf{L}}(\lambda_{\mathbf{s}}, P)$ associated to arbitrary one parameter
subgroups
\begin{equation}\label{eq:1PS}
	\lambda_{\mathbf{s}} \colon \Gm \to G[n], \quad
	\tau \mapsto (\tau^{s_1},\tau^{s_2},\dots,\tau^{s_n}),
\end{equation}
where $\mathbf{s} = (s_1,\dots,s_n)$ is an $n$-tuple of integers, $P$ is a
point in $X[n]$ and we use $\tau$-coordinates on $G[n]$ as in Remark
\ref{rem:tau-coord}. Let
\begin{equation*}
	P_0 = \lim_{\tau\to 0} \lambda_{\mathbf{s}}(\tau)\cdot P \in X[n]
\end{equation*}
provided the limit exists.

We shall write $t_i(P)$ for the $i$'th coordinate of the image of $P\in X[n]$ in
$\AA^{n+1}$. We use the notation $Y_I$ and $\Delta^{D,i}_I$ introduced in
Section \ref{sec:fibres}, and to avoid writing out special cases it is
convenient to define
\begin{equation*}
	\Delta^{D, 0}_I = Y'_I,\quad \Delta^{D, \max I}_I = Y_I
\end{equation*}
whenever $[Y'] \xrightarrow{[D]} [Y]$ is an arrow in $\Gamma(X_0)$. For the same
reason we let $s_0 = s_{n+1} = 0$.

\begin{proposition}\label{prop:pointweight}
	Let $P\in X[n]$ and let $P_0$ be its limit under a $1$-PS \eqref{eq:1PS} as
	above, provided it exists. Define
	\begin{equation*}
		I = \{ i \mid t_i(P) = 0 \}
	\end{equation*}
	so that $P \in X[n]_I$.
	\begin{itemize}
		\item[(a)]
			The limit $P_0$ exists if and only if $s_{i-1} \le s_i$ for all
			$i\not\in I$. If this is the case, we have $t_i(P_0)=0$ if and only if
			$i$ is in
			\begin{equation*}
				J = I \union \{ i \mid s_{i-1} < s_i \},
			\end{equation*}
			so $P_0\in X[n]_J$.
		\item[(b)]
			Assume the limit $P_0$ exists and $X[n]_I$ is smooth at $P$,
			so that $P$ is in a unique component $\Delta^{D,i}_I$ of $X[n]_I$. Let
			$j>i$ be the successor to $i$ in $I\union \{n+2\}$. By part (a), we
			have $s_i \le s_{i+1} \le \cdots \le s_{j-1}$.
			\begin{itemize}
				\item[(i)]
					Assume all $s_k \ne 0$ for $i\le k < j$. Then $i\ne 0$ and $i\ne
					\max I$. Define $a$ ($i\le a \le j$) by the property $s_k<0$ if
					and only if $k<a$, for all $i\le k <j$. Then $P_0 \in
					\Delta_J^{D,a'} \intsct \Delta_J^{D,a}$, where $a'<a$ is the
					predecessor to $a$ in $J\union \{0\}$, and
					$\mu^{\sheaf{L}}(\lambda_{\mathbf{s}}, P)$ is the sum over all
					$k=1,2,\dots,n$ of contributions
					\begin{align*}
						-k s_k \quad \text{for $k<a$},\\
						(n+1-k)s_k\quad \text{for $k\ge a$}.
					\end{align*}
				\item[(ii)]
					Assume at least one $s_k=0$ for $i\le k < j$. Define $a$ ($i\le a
					\le j$) and $b$ ($i\le b \le j$) by the property $s_k=0$ if and
					only if $a\le k < b$, for all $i\le k < j$. Then $P_0 \in
					\Delta^{D,a}_J$, $X[n]_J$ is smooth at $P_0$, and
					$\mu^{\sheaf{L}}(\lambda_{\mathbf{s}}, P)$ is the sum over all
					$k=1,2,\dots,n$ of contributions
					\begin{align*}
						-k s_k \quad \text{for $k<a$},\\
						(n+1-k)s_k \quad \text{for $k\ge b$}.
					\end{align*}
			\end{itemize}
		\item[(c)]
			Assume the limit $P_0$ exists and $X[n]_I$ is singular at $P$, so that
			$P \in \Delta^{D,i}_I\intsct \Delta^{D,j}_I$ for a consecutive pair
			$i<j$ in $I\union \{0,n+1\}$. Then $P_0 \in \Delta^{D,i}_J \intsct
			\Delta^{D,j}_J$ and $\mu^{\sheaf{L}}(\lambda_{\mathbf{s}}, P)$ is the
			sum over all $k=1,2,\dots,n$ of contributions
			\begin{align*}
				-k s_k \quad \text{for $k<j$},\\
				(n+1-k)s_k \quad \text{for $k\ge j$}.
			\end{align*}
	\end{itemize}
\end{proposition}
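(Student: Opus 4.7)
The plan is to work throughout in the \'etale local model of Proposition \ref{prop:localeq}; by the functoriality of the construction (Proposition \ref{prop:functoriality}) and of the linearization (Lemma \ref{Lemma-modlin}(i)), this suffices. In this model, $X[n]$ embeds in $(X \times_{\AA^1} \AA^{n+1}) \times (\PP^1)^n$ with equations $u_l v_{l-1} = v_l u_{l-1} t_l$ for $1 \le l \le n+1$, using $(u_0:v_0) = (1:x)$ and $(u_{n+1}:v_{n+1}) = (y:1)$. Under $\lambda_{\mathbf{s}}$, the $\tau$-weight is $s_l - s_{l-1}$ on $t_l$ (with $s_0 = s_{n+1} = 0$), $s_k$ on $u_k$, and $0$ on $v_k$. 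Tracing through \eqref{eq:local-linearization} using $\tilde\tau^{n+1}=\tau$, the $\tau$-weight on $\sheaf{L}|_{\PP^1_k}$ at the fixed point $(1:0)$ is $k s_k$ and at $(0:1)$ is $-(n+1-k) s_k$.

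For (a), the $\PP^1$-factors of the ambient space are proper, so the orbit map $\Gm \to X[n]$ extends across $0$ if and only if no coordinate $t_l$ with $t_l(P) \ne 0$ acquires a pole; this reads $s_l \ge s_{l-1}$ for $l \not\in I$. Moreover $t_l(P_0) = 0$ iff either $l \in I$ or the weight is strictly positive, giving the stated description of $J$.

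For (b) and (c), the first task is to read off $(u_k:v_k)(P)$ for every $k$ from the component containing $P$. The key mechanism is propagation through the chain equations: $v_m = 0$ forces $v_{m-1}=0$ wherever $(u_m:v_m)$ is well defined (so $u_m \ne 0$), while $v_{m-1}=0$ forces $v_m=0$ as soon as $t_m \ne 0$; symmetric statements hold for $u$. Combined with the chart equations $\Delta^{D,i}_I \cap W_i = V(v_{i-1}/u_{i-1})$ and $\Delta^{D,i}_I \cap W_j = V(u_j/v_j)$ from Section \ref{sec:fibres}, this shows: for $P$ a smooth point of $X[n]_I$ on $\Delta^{D,i}_I$, $(u_k:v_k)(P) = (1:0)$ for $k<i$, both coordinates are nonzero for $i \le k < j$, and $(u_k:v_k)(P) = (0:1)$ for $k \ge j$; for $P$ singular on $\Delta^{D,i}_I \cap \Delta^{D,j}_I$, the transition is sharp at $k=j$.

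Taking the limit under $\lambda_{\mathbf{s}}$ acts factorwise on $(\PP^1)^n$: on each factor, $(u_k:v_k)(P_0)$ is $(1:0)$ when $s_k<0$ or when $v_k(P)=0$, is $(0:1)$ when $s_k>0$ or when $u_k(P)=0$, and equals $(u_k:v_k)(P)$ when $s_k=0$ with both coordinates nonzero. Summing the weight contributions from the first paragraph across all $k$ yields the three stated formulas; the splitting indices $a$ (and $b$ in (b)(ii)) simply record the sign transitions of $s_k$ within the range $i \le k < j$, so that $k<a$ corresponds to $s_k<0$ (limit $(1:0)$, contribution $-ks_k$), $k\ge a$ or $k\ge b$ corresponds to $s_k>0$ (limit $(0:1)$, contribution $(n+1-k)s_k$), and the middle zero-range contributes nothing. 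Finally, inspecting $(u_k:v_k)(P_0)$ determines the component(s) of $X[n]_J$ containing $P_0$: a sharp transition identifies a double locus $\Delta^{D,a'}_J \cap \Delta^{D,a}_J$ (in (b)(i)) or $\Delta^{D,i}_J \cap \Delta^{D,j}_J$ (in (c)), whereas intermediate nonfixed coordinates place $P_0$ at a smooth point of a unique $\Delta^{D,a}_J$ (in (b)(ii)). The main obstacle is the zero-propagation bookkeeping in the third paragraph; the weight summation and component identification that follow are then routine.
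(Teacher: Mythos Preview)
Your proof is correct and follows essentially the same approach as the paper: both arguments pass to the \'etale local model of Proposition \ref{prop:localeq}, read off the $(u_k:v_k)$-coordinates of $P$ and $P_0$ from the component description, and then sum the linearization weights \eqref{eq:local-linearization} factor by factor to obtain $\mu^{\sheaf{L}}$. The only cosmetic difference is in part (a), where you argue existence of the limit via properness of the ambient $(\PP^1)^n$-factors, whereas the paper uses properness of $X[n]\to C[n]$ to reduce directly to the base $\AA^{n+1}$; the conclusion is identical.
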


\begin{remark}
	In case (b), the contribution to $\mu^{\sheaf{L}}(\lambda_{\mathbf{s}}, P)$
	for $k$ in the range $i\le k < j$ may be written
	\begin{equation*}
		\left(\tfrac{n+1}{2} - k\right) s_k + \tfrac{n+1}{2} |s_k|
	\end{equation*}
	regardless of the values of $a$ and $b$.
\end{remark}

\begin{proof}
	Since $\pi\colon X[n]\to C[n]$ is proper, existence of the limit for $P$ is
	equivalent to the existence of the limit for $Q=\pi(P)$. The $G[n]$-action on
	$C[n] = C\times_{\AA^1}\AA^{n+1}$ is a pullback from $\AA^{n+1}$, on which
	$\sigma \in G[n]$ acts by
	\begin{equation*}
		(t_1,\dots, t_{n+1}) \mapsto (\sigma_1 t_1, \dots, \sigma_{n+1} t_{n+1}).
	\end{equation*}
	The $1$-PS $\lambda_{\mathbf{s}} \colon \Gm \to G[n]$ is given in
	$\sigma$-coordinates by $\sigma_i = \tau^{s_i - s_{i-1}}$. If $t_i$ is
	nonzero, the limit of $\tau^{s_i - s_{i-1}}t_i$, as $\tau$ approaches zero,
	exists if and only if the exponent $s_i-s_{i-1}$ is nonnegative. More
	precisely, the limit equals $t_i$ if $s_i = s_{i-1}$ and it is $0$ if $s_i >
	s_{i-1}$. This proves (a).
	
	In view of Lemma \ref{Lemma-modlin}, the $\Gm$-weight can be computed in the
	\'etale local coordinates from Section \ref{sec:localcoordinates}. Let
	$i<j$ be consecutive elements in $I\union\{0,n+1\}$. In the \'etale local
	coordinates, as one easily verifies, the component $\Delta^i_I$ of $X[n]_I$
	is given by the vanishing of $t_k$ for $k\in I$, together with
	\begin{align*}
		(u_k:v_k) &= (1:0) & &\text{for $k<i$,} \\
		(u_k:v_k) &= (0:1) & &\text{for $k\ge j$,}
	\end{align*}
	and (consequently) $\Delta^i_I\intsct \Delta^j_I$ is given by
	\begin{align*}
		(u_k:v_k) &= (1:0) & &\text{for $k<j$,} \\
		(u_k:v_k) &= (0:1) & &\text{for $k\ge j$.}
	\end{align*}
	
	Clearly $G[n]$ acts trivially on $(u_k:v_k)=(1:0)$ and $(u_k:v_k)=(0:1)$, so
	to locate the limit point $P_0$ it remains only to work out the action on
	$(u_k : v_k)$ for the remaining range $i\le k < j$. The action by the $1$-PS
	is given by $(u_k:v_k) \mapsto (\tau^{s_k} u_k: v_k)$. In case (b), when $P$
	is on a unique component $\Delta^i_I$, we have $(u_k: v_k) \ne (1:0)$ and
	$(u_k:v_k) \ne (0:1)$ for $k$ in the range $i \le k < j$.

	In case (b.i), we have
	\begin{equation*}
		\underbrace{s_i \le \cdots \le s_{a-1}}_{<0}
		\le \underbrace{s_a \cdots \le s_{j-1}}_{>0}
	\end{equation*}
	and thus the limit point $P_0$ has coordinates
	\begin{align*}
		(u_k:v_k) &= (1:0) & &\text{for $k<a$} \\
		(u_k:v_k) &= (0:1) & &\text{for $k\ge a$}
	\end{align*}
	which shows $P_0\in \Delta^{a'}_J \intsct \Delta^a_J$ as claimed.
	
	In case (b.ii), we have
	\begin{equation*}
		\underbrace{s_i \le \cdots \le s_{a-1}}_{<0} \le
		\underbrace{s_a \le \cdots \le s_{b-1}}_{=0} \le
		\underbrace{s_b \cdots \le s_{j-1}}_{>0}
	\end{equation*}
	and thus the limit point $P_0$ has coordinates
	\begin{align*}
		(u_k:v_k) &= (1:0) & &\text{for $k<a$} \\
		(u_k:v_k) &= (0:1) & &\text{for $k\ge b$}
	\end{align*}
	with the remaining $(u_k:v_k)$, for $a\le k < b$ equal to those of $P$. Thus
	$P_0\in \Delta^a_J$ and it is a smooth point in $X[n]_J$.
	
	In case (c), $P$ has coordinates
	\begin{align*}
		(u_k:v_k) &= (1:0) & &\text{for $k<j$} \\
		(u_k:v_k) &= (0:1) & &\text{for $k\ge j$}
	\end{align*}
	and the $\Gm$-action does not change these, so $P_0$ has the same
	$(u_k:v_k)$-coordinates: thus $P_0 \in \Delta^i_J \intsct \Delta^j_J$.
	
	It remains to write down the weights for the induced $\Gm$-action on
	$\sheaf{L}(P_0)$.  Consider the $1$-PS $\lambda_{\mathbf{s}}^{n+1}\colon \Gm
	\to G[n]$ obtained by composing $\lambda_{\mathbf{s}}$ with the $(n+1)$'st
	power map. By definition of the linearized line bundle
	$\sheaf{L}=\sheaf{L}_0^{n+1}$ in Section \ref{sec:linearization}, the
	$\lambda_{\mathbf{s}}$-weight on $\sheaf{L}(P_0)$ agrees with the
	$\lambda_{\mathbf{s}}^{n+1}$-weight on $\sheaf{L}_0(P_0)$. Since
	$\sheaf{L}_0$ is the tensor product of the pullbacks of the tautological
	bundles $\OO_{\PP^1}(1)$ on each factor in $(\PP^1)^n$, the total
	$\lambda_{\mathbf{s}}^{n+1}$-weight on $\sheaf{L}_0$ is the sum of
	contributions of $\lambda_{\mathbf{s}}^{n+1}$-weights on each factor $\PP^1$.
	On the $k$'th factor, the $\lambda_{\mathbf{s}}^{n+1}$-linearization is
	defined by the lifted action, from $\PP^1$ to $\AA^2$, for which
	$\tilde\tau\in\Gm$ acts by
	\begin{equation*}
		(u_k, v_k) \mapsto ((\tilde\tau)^{k s_k} u_k, (\tilde\tau)^{(k-(n+1))s_k} v_k).
	\end{equation*}
	The $\lambda_{\mathbf{s}}^{n+1}$-fixed point $P_0$ necessarily has
	coordinates $(u_k : v_k)$ of the form $(1:0)$ or $(0:1)$ for all $k$ with
	$s_k\ne 0$. The $\lambda_{\mathbf{s}}^{n+1}$-weight is thus the sum of $k
	s_k$ over all $k$ for which $(u_k:v_k)=(1:0)$ and $(k-(n+1))s_k$ over all $k$
	for which $(u_k:v_k)=(0:1)$. Reversing the signs gives the claimed
	expressions for $\mu^{\sheaf{L}}(\lambda_{\mathbf{s}}, P)$.
\end{proof}


\section{GIT-analysis}\label{sec:GIT-analysis}

The $G[n]$-linearized invertible sheaf $\sheaf{L}$ on $X[n]$ constructed in
Lemma \ref{Lemma-modlin} gives rise to a certain ample linearized invertible
sheaf $\sheaf{M}_{\ell} $ on $ \mathbf{H}^n := \Hilb^n(X[n]/C[n])$ (the integer
$\ell \gg 0$ plays only a formal role). In this section we apply a relative
version of the Hilbert--Mumford criterion to carry out a detailed analysis of
(semi-)stability for points in $ \mathbf{H}^n $, with respect to
$\sheaf{M}_{\ell} $. This leads to our main result in this section, Theorem
\ref{theorem-stablelocus}, which provides a detailed combinatorial description
of the (semi-)stable locus.

\subsection{Relative GIT}\label{subsec-relgit}

We first give a brief summary of how Mumford's Geometric Invariant Theory
\cite{GIT} can be carried out in a relative setting. For further details, we
refer to \cite{GHH-2015}.

Let $S = \Spec A$ be an affine scheme of finite type over $k$, and let $f \colon
Y \to S$ be a projective morphism. Let $G$ be an affine linearly reductive group
over $k$. Assume that $G$ acts on $Y$ and $S$ such that $f$ is equivariant. Let
$\mathcal{P} $ be an ample $G$-linearized invertible sheaf on $Y$. Then one can
define the set of stable points $Y^{s}(\mathcal{P})$ and the set of semi-stable
points $Y^{ss}(\mathcal{P})$ in a similar fashion as in the absolute case. These
sets are open and invariant. For the semi-stable locus, there exists a
universally good quotient  
\begin{equation*}
	\phi \colon Y^{ss}(\mathcal{P}) \to Z.
\end{equation*}
We shall often refer to $Z$ as the \emph{GIT quotient} of $Y$ by $G$. Moreover,
there is an open subscheme $ \widetilde{Z} \subset Z $ with $Y^{s}(\mathcal{P})
= \phi^{-1}(\widetilde{Z})$, such that the restriction 
\begin{equation*}
Y^{s}(\mathcal{P}) \to \widetilde{Z}
\end{equation*}
is a universally geometric quotient. For the applications in this paper, it is
of particular importance to note that $Z$ is relatively projective over the
quotient $S/G = \Spec A^G$.

The main tool we shall use in order to compute the (semi-)stable locus, is a
relative version of the well-known Hilbert--Mumford numerical criterion. This
can be formulated as follows \cite[Cor.~1.1]{GHH-2015} (recall our definition of
the Hilbert--Mumford invariants in \ref{subsubsec-HMinvariants}).

\begin{proposition}\label{prop-RelHilbMum-crit}
	Let $y \in Y$ be a point.
	\begin{enumerate}
		\item
			The point $y$ is stable if and only if
			$\mu^{\mathcal{P}}(\lambda,y) > 0$ for every nontrivial $1$-PS
			$\lambda \colon \Gm \to G$.
		\item
			The point $y$ is semi-stable if and only if
			$\mu^{\mathcal{P}}(\lambda,y) \geq 0$ for every $1$-PS
			$\lambda \colon \Gm \to G$.
	\end{enumerate}
\end{proposition}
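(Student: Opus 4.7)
The plan is to reduce the relative criterion to Mumford's classical absolute Hilbert--Mumford numerical criterion applied to a linear $G$-action on a projective space. Since $S = \Spec A$ is affine and $\mathcal{P}$ is $f$-ample, the graded $A$-algebra $R = \bigoplus_{n \ge 0} H^{0}(Y, \mathcal{P}^{\tensor n})$ is finitely generated, and it inherits a $G$-action from the $G$-linearization of $\mathcal{P}$. Linear reductivity of $G$ allows me to choose, for $d \gg 0$, a finite-dimensional $G$-stable $k$-subspace $V \subset R_{d}$ whose image in $f_{*}\mathcal{P}^{\tensor d}$ generates it as an $\OO_{S}$-module. This produces a $G$-equivariant closed embedding
\begin{equation*}
	Y \into \PP(V^{*}) \times S
\end{equation*}
over $S$, along which $\mathcal{P}^{\tensor d}$ together with its linearization is pulled back from $\OO_{\PP(V^{*})}(1)$ with the natural linearization coming from the $G$-representation $V$.

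Next, I would identify the relative ss/s loci of $Y$ with the preimages of the absolute ss/s loci of $\PP(V^{*})$. A $G$-invariant section of $\mathcal{P}^{\tensor nd}$ on $Y$ arises by restriction from an element of $(\mathrm{Sym}^{n} V \tensor_{k} A)^{G}$, and because $S$ itself is affine, the requirement that the non-vanishing locus of such a section is $f$-affine reduces to the affineness of the corresponding locus on $\PP(V^{*})$. Hence $(\mathrm{semi})$-stability of $y \in Y$ with respect to $\mathcal{P}$ coincides with $(\mathrm{semi})$-stability of its $\PP(V^{*})$-component $p$ with respect to $\OO(1)$, to which Mumford's classical numerical criterion \cite{GIT} directly applies. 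To transport this statement back to $Y$ it remains to compare Hilbert--Mumford invariants: for a nontrivial $1$-PS $\lambda \colon \Gm \to G$, properness of both $Y \to S$ and $\PP(V^{*})$ implies that the limit $\lim_{\tau \to 0} \lambda(\tau) \cdot y$ exists in $Y$ if and only if the limit of the $S$-component exists, and in that case the $G$-equivariant identification of line bundles at the limit gives
\begin{equation*}
	d \cdot \mu^{\mathcal{P}}(\lambda, y) = \mu^{\OO(1)}(\lambda, p);
\end{equation*}
when the $S$-limit fails to exist, the convention $\mu^{\mathcal{P}}(\lambda,y) = +\infty$ makes both positivity conditions trivially satisfied for that $\lambda$.

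The step I expect to be the main obstacle is the middle one: verifying that the mixed invariants in $(\mathrm{Sym}^{\bullet} V \tensor_{k} A)^{G}$ do not enlarge the ss/s locus beyond the preimage of the absolute locus on $\PP(V^{*})$, and that $f$-affineness of their non-vanishing loci indeed reduces via the affineness of $S$ to the corresponding absolute affineness statement on $\PP(V^{*})$. This requires a careful use of linear reductivity to split off the $A$-coefficients from the $\mathrm{Sym}^{\bullet} V$ part and an argument that products of sections with $G$-invariant loci remain $G$-invariant on the nose after averaging. Once this compatibility is pinned down, the relative numerical criterion follows mechanically from the absolute one combined with the weight identification above.
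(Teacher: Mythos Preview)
The paper does not prove this proposition; it merely cites \cite[Cor.~1.1]{GHH-2015}. So there is no ``paper's own proof'' to compare against, and your task is really to supply one.

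Your reduction has a genuine gap at exactly the step you flag as the main obstacle, and it is more serious than a technicality. The claim that relative (semi-)stability of $y\in Y$ coincides with absolute (semi-)stability of its $\PP(V^{*})$-component $p$ is false once $G$ acts nontrivially on $S$, which is precisely the situation the paper needs (the torus $G[n]$ acts nontrivially on $C[n]$). The direction that fails is $\Rightarrow$: a $G$-invariant element $s\in(\mathrm{Sym}^{n}V\tensor_k A)^{G}$ witnessing semi-stability of $y$ need not arise from $(\mathrm{Sym}^{n}V)^{G}\tensor_k A$, because the $G$-action mixes the two tensor factors. Concretely, take $G=\Gm$ acting on $A=k[a]$ with weight $-1$ and on $V=kx_0\oplus kx_1$ with weights $1,0$. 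Then $ax_0\in (V\tensor A)^G$ is invariant and non-vanishing at $y=((1:0),1)\in\PP^1\times\AA^1$, so $y$ is relatively semi-stable; yet its projective component $(1:0)$ is unstable in $\PP^1$, since $(\mathrm{Sym}^{\bullet}V)^G=k[x_1]$. Your weight comparison reflects the same asymmetry: when the $S$-limit of $y$ fails to exist you set $\mu^{\mathcal P}(\lambda,y)=+\infty$, but $\mu^{\OO(1)}(\lambda,p)$ is still a finite number that may well be negative, so the two numerical criteria do not match.

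The fix is not to split $A$ off from $\mathrm{Sym}^{\bullet}V$ but to bring $A$ along: choose in addition a finite-dimensional $G$-stable subspace $W\subset A$ generating $A$, embed $S\hookrightarrow\AA(W)$ and thus $Y\hookrightarrow\PP(V^{*})\times\AA(W)$, and then pass to a $G$-equivariant projective compactification (e.g.\ inside $\PP(V^{*}\oplus W^{*})$ or $\PP(V^{*})\times\PP(W^{*}\oplus k)$ with a suitable $\OO(1,m)$). On that compactification the classical Hilbert--Mumford criterion applies, and one checks that the boundary contributes nothing new because limits escaping $S$ correspond exactly to your $+\infty$ convention. This is essentially the argument carried out in \cite{GHH-2015}.
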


\subsection{Notation and setup}

Let $C = \Spec A$ be a smooth, connected affine $k$-curve and let $X \to C$ be a
\emph{projective} simple degeneration. We assume that the dual graph $\Gamma :=
\Gamma(X_0)$ allows a bipartite orientation, and we keep fixed one of the two
possible such orientations throughout this section. 

Let $X[n] \to C[n]$ be the $n$-th expanded degeneration of $X \to C$ (with
respect to the given orientation on $\Gamma$). By Proposition
\ref{prop:projectivity-criterion}, the model $X[n]$ is again projective over
$C[n]$. We denote by $\sheaf{L}$ the $G[n]$-linearized line bundle on $X[n]$
constructed in Lemma \ref{Lemma-modlin}. Finally let $\lambda_{\mathbf{s}}$ be a
one parameter subgroup of $G[n]$ as in \eqref{eq:1PS}.

\subsubsection{The determinant line bundle}\label{subsec-Hilblin}

The relative Hilbert scheme $\mathbf{H}^n := \Hilb^n(X[n]/C[n])$ is again
projective over $C[n]$, and it inherits an action by $G[n]$ such that the
structural map $\mathbf{H}^n \to C[n]$ is equivariant. Let $\mathbf{Z}^n \subset
\mathbf{H}^n \times_{C[n]} X[n]$ be the universal family and denote by $p$ and
$q$ the first and second projections, respectively. Then the line bundle
\begin{equation*}
	\sheaf{M}_{\ell} :=
	\mathrm{det}~p_* \left( q^*\sheaf{L}^{\tensor \ell} \vert_{\mathbf{Z}^n} \right) 
\end{equation*}
is relatively ample when $\ell \gg 0$ \cite[Prop.~2.2.5]{HL-2010}, and it
inherits a $G[n]$-linearization from $\sheaf{L}$ (cf. e.g. the discussion in
\cite[Page 90]{HL-2010}). To simplify notation, we write $\sheaf{M}$ instead of
$\sheaf{M}_{1}$.

\subsubsection{Reduction to smooth subschemes}

Let us fix a $1$-PS $\lambda_{\mathbf{s}}$ and a point $[Z] \in \mathbf{H}^n$.
Assume that the limit of $\lambda_{\mathbf{s}}(\tau) \cdot Z$ as $\tau$ goes to
zero exists in $\mathbf{H}^n $; we denote this limit by $Z_0$. Then $\Gm$ acts
on the fibre of $\sheaf{M}_{\ell}$ at $Z_0$, and we will now investigate this
representation in some detail. 

We decompose the limit as 
\begin{equation*}
	Z_0 = \Union_{P} Z_{0,P}, 
\end{equation*}
with $Z_{0,P}$ a finite subscheme of length $n_P$ supported in $P$. Now
$\OO_{Z_0} \tensor \sheaf{L}$ is trivial as a line bundle on $Z_0$, but its
$\Gm$-action is nontrivial. Writing $\sheaf{L}(P)$ for the fibre of $\sheaf{L}$
at $P$, we have an isomorphism
\begin{equation*}
	H^0(\OO_{Z_0} \tensor \sheaf{L})
	= \bigoplus_P \big(H^0(\OO_{Z_{0,P}})\tensor \sheaf{L}(P)\big)
\end{equation*}
as $\Gm$-representations. Taking determinants, we find 
\begin{align*}
	\wedge^n H^0(\OO_{Z_0} \tensor \sheaf{L})
	&= \Tensor_P \wedge^{n_P}\big(H^0(\OO_{Z_{0,P}})\tensor \sheaf{L}(P)\big) \\
	&= \Tensor_P \wedge^{n_P}\big(H^0(\OO_{Z_{0,P}})\big) \tensor \sheaf{L}(P)^{n_P}\\
	&= \Big(\wedge^nH^0(\OO_{Z_0})\Big) \tensor \Big(\Tensor_P \sheaf{L}(P)^{n_P}\Big).
\end{align*}

\begin{definition}\label{def:boundcomb}
	We define the \emph{bounded weight} $\mu^{\mathscr{M}}_b(\mathbf{s},Z)$,
	resp.~the \emph{combinatorial weight} $\mu^{\mathscr{M}}_c(\mathbf{s},Z)$, to
	be the negative of the $\mathbb{G}_m$-weight on
	$\wedge^nH^0(\mathcal{O}_{Z_0})$, resp.~on $\bigotimes_P
	\mathscr{L}(P)^{n_P}$.
\end{definition}

Having made this definition we can, accordingly, write the negative of the
$\mathbb{G}_m$-weight on $\wedge^n H^0(\mathcal{O}_{Z_0} \otimes \mathscr{L})$
as a sum of $\mu^{\mathscr{M}}_b(\mathbf{s},Z)$ and
$\mu^{\mathscr{M}}_c(\mathbf{s},Z)$. Clearly, if we replace $\mathscr{L}$ by
$\mathscr{L}^{\ell}$ in these expressions we find, for any $ \mathbf{s} $, that
$$ \mu^{\mathscr{M}_{\ell}}_c(\mathbf{s},Z) = \ell \cdot
\mu^{\mathscr{M}}_c(\mathbf{s},Z) $$ and that $$
\mu^{\mathscr{M}_{\ell}}_b(\mathbf{s},Z) = \mu^{\mathscr{M}}_b(\mathbf{s},Z), $$
since the bounded weight only depends on the underlying limit subscheme $Z_0$.

Now if $\ell \gg 0$, we in fact have that 
\begin{equation*}
	\sheaf{M}_{\ell}(Z_0) = \wedge^n H^0(\OO_{Z_0} \tensor \sheaf{L}^{\ell}), 
\end{equation*} 
thus we obtain a sum-decomposition of the Hilbert--Mumford invariant attached to
$\sheaf{M}_{\ell}$, $\mathbf{s}$ and $Z$:
\begin{equation}\label{eq:weightdecomp}
	\mu^{\sheaf{M}_{\ell}}(\mathbf{s},Z)
	= \mu^{\sheaf{M}_{\ell}}_b(\mathbf{s},Z)
	+ \mu^{\sheaf{M}_{\ell}}_c(\mathbf{s},Z). 
\end{equation}
Since the right hand side is defined for all $\ell \in \mathbb{N}$, we formally
use the expression $\mu^{\sheaf{M}_{\ell}}(\mathbf{s},Z)$ to denote the above
sum in all cases.
 
Note that for every $Z$ and $\mathbf{s}$ in the situation above, the value
$\mu^{\sheaf{M}}_c(\mathbf{s},Z)$ only depends on the underlying cycle of $Z$,
and not on its scheme structure. This fact is why we chose the terminology {\em
combinatorial weight}. The terminology {\em bounded weight}, however, is
explained by the following lemma.

\begin{lemma}\label{lemma-nonred1}
	Let $[Z] \in \mathbf{H}^n$ and let $\mathbf{s} \in \mathbb{Z}^n$ be any
	element such that the limit of $\lambda_{\mathbf{s}}(\tau) \cdot Z$, as
	$\tau$ goes to zero, exists. Then there are integers $a_i =
	a_i(Z,\mathbf{s})$ such that 
	\begin{equation*}
	 \mu^{\sheaf{M}}_b(\mathbf{s},Z) = \sum_{i=1}^n a_i s_i, 
	\end{equation*}
	where $\vert a_i \vert \leq 2 n^2$ for every $i$.
\end{lemma}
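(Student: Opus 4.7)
The plan is to compute the bounded weight explicitly at each point of the support of $Z_0$ using the local coordinates from Section \ref{sec:localcoordinates}, and show the resulting linear form in $\mathbf{s}$ has coefficients controlled by the length $n$. Recall that $\mu^{\sheaf{M}}_b(\mathbf{s}, Z)$ is defined as the negative of the $\Gm$-weight on $\wedge^n H^0(\OO_{Z_0})$, and since $Z_0 = \Union_P Z_{0,P}$ decomposes as a disjoint union of fat points (each $P$ being $\Gm$-fixed, because the finite support of the invariant $Z_0$ is pointwise fixed), $\wedge^n H^0(\OO_{Z_0}) = \Tensor_P \wedge^{n_P} H^0(\OO_{Z_{0,P}})$. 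Hence it suffices to compute, for each $P$, the trace of $\lambda_{\mathbf{s}}$ on the local Artinian ring $\OO_{Z_{0,P}}$ and sum over $P$.

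Next I would work \'etale-locally around each $P$ in a chart $W_k$ of Proposition \ref{prop:localeq}, in which coordinates are $t_i$ (for $i\ne k$), $v_{k-1}/u_{k-1}$, $u_k/v_k$ and the external coordinates $z, \ldots$. Using Remark \ref{rem:tau-coord} one reads off the $\lambda_{\mathbf{s}}$-weights: $w(t_i) = s_i - s_{i-1}$ (with $s_0 = s_{n+1} = 0$), $w(v_{k-1}/u_{k-1}) = -s_{k-1}$, $w(u_k/v_k) = s_k$, and $w(z) = 0$. Each coordinate weight is thus a linear form $w_j = \sum_i b_{j,i} s_i$ with $b_{j,i} \in \{-1,0,1\}$. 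The key combinatorial observation, verified by a direct case analysis on whether $i$ equals $k-1$, $k$, or neither, is that for each fixed index $i$, \emph{at most two} of the local coordinates at $P$ have a weight involving $s_i$.

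Since $Z_{0,P}$ is $\Gm$-fixed, its ideal $I \subset \OO_{X[n],P}$ is $\Gm$-stable, so the quotient $\OO_{Z_{0,P}}$ inherits a weight-space decomposition. Each weight space of $\OO_{X[n],P}$ is spanned by monomials in the local coordinates, so we may choose a basis $B$ of $\OO_{Z_{0,P}}$ consisting of (classes of) such monomials $\prod_j x_j^{\alpha_j}$. Being length $n_P$ Artinian local with maximal ideal $\bar{\mathfrak{m}}$, we have $\bar{\mathfrak{m}}^{n_P} = 0$ in $\OO_{Z_{0,P}}$, so every basis monomial satisfies $\sum_j \alpha_j < n_P$. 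The trace of $\lambda_{\mathbf{s}}$ on $\OO_{Z_{0,P}}$ is then
\begin{equation*}
	\sum_{b \in B} \sum_j \alpha_j(b)\, w_j = \sum_j N_{j,P} w_j,
	\qquad N_{j,P} := \sum_{b \in B} \alpha_j(b).
\end{equation*}
Since $B$ has $n_P$ elements and each one has total degree $< n_P$, we get $N_{j,P} < n_P^2$ for every $j$.

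Finally, writing $\mu^{\sheaf{M}}_b(\mathbf{s}, Z) = -\sum_P \sum_j N_{j,P} w_j = \sum_i a_i s_i$ with
\begin{equation*}
	a_i = -\sum_P \sum_j N_{j,P}\, b_{j,i},
\end{equation*}
the observation that at each $P$ at most two coordinates have $s_i$ in their weight (with $|b_{j,i}| \le 1$) yields $\bigl|\sum_j N_{j,P}\, b_{j,i}\bigr| \le 2 \max_j N_{j,P} < 2n_P^2$. Summing and using $\sum_P n_P^2 \le (\sum_P n_P)\cdot \max_P n_P \le n^2$, we conclude $|a_i| \le 2n^2$, as required. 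The only mildly delicate point is the existence of the monomial weight basis on the quotient, which follows immediately from $\Gm$-invariance of $I$; everything else is bookkeeping in the explicit local coordinates.
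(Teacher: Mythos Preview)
Your argument is correct and is essentially the paper's own proof: decompose $Z_0$ into its local pieces $Z_{0,P}$, pick a monomial weight basis of each $\OO_{Z_{0,P}}$ in the \'etale chart $W_k$, bound exponents using $\bar{\mathfrak m}^{n_P}=0$, and sum. The paper streamlines one step: since $Z_0$ lies in a single fibre $X[n]_q$, the $t_i$ are constants on $Z_0$ and can be dropped at the outset, leaving only $\tilde v_{k-1}=v_{k-1}/u_{k-1}$, $\tilde u_k=u_k/v_k$ and the invariant $z_\alpha$ as genuine fibre coordinates; then exactly two of these carry nonzero weight, namely $-s_{k-1}$ and $s_k$, which is your ``at most two coordinates involve $s_i$'' observation without the case analysis. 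One small imprecision to fix: your inference ``$\bar{\mathfrak m}^{n_P}=0$, so every basis monomial has $\sum_j\alpha_j<n_P$'' requires the coordinates occurring in the monomial to lie in $\bar{\mathfrak m}$. This is automatic for the coordinates that matter, since any semi-invariant with nonzero $\Gm$-weight must vanish at the $\Gm$-fixed point $P$; the remaining (weight-zero) coordinates may be translated to vanish at $P$ without affecting the weight computation.
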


\begin{proof}
	Let $q \in C[n]$ be the point such that the limit $Z_0$ of $Z$ is contained
	in the fibre $X[n]_q$. Then $q$ is a $\Gm$-fixpoint. Let
	$\widetilde{D}\subset X_0$ be the singular locus and denote by
	$\widetilde{\Delta}\subset X[n]$ the inverse image of $\widetilde{D}\times_C
	C[n]$ under the $G[n]$-equivariant map $X[n] \to X\times_C C[n]$. This map
	restricts to an isomorphism $X[n] \setminus \widetilde{\Delta} \to
	(X\setminus \widetilde{D})\times_C C[n]$, and it follows that the
	$\Gm$-action on each $Z_{0,P}$ is trivial (so the weight is zero) unless
	$Z_{0,P}$ is supported on $\widetilde{\Delta}$.
	
	Now we consider the case where $P$ is a point in $\widetilde{\Delta}$.
	Because $Z_{0,P}$ is a finite local scheme, with $P$ a $\Gm$-fixpoint, we can
	work \'etale locally and use the coordinates from Section
	\ref{sec:localcoordinates}. More precisely, locally at $P$, we can find an
	\'etale chart $W_{j+1}$ with coordinates $t_1, \ldots, t_{n+1}, \tilde{v}_j,
	\tilde{u}_{j+1}$, with relation $t_{j+1} =
	\tilde{v}_j \tilde{u}_{j+1}$, and, depending on the relative dimension of $X \to C$, 
	additional coordinates $\{z_{\alpha}\}_{\alpha}$ (subject to no relations). 
	Here we write, for simplicity, $\tilde{v}_j$
	and $\tilde{u}_{j+1} $ instead of $v_j/u_j$ and $u_{j+1}/v_{j+1}$,
	respectively. Since $P \in \widetilde{\Delta}$ we can assume $t_{j+1}(P) =
	0$, which implies that $\tilde{v}_j \tilde{u}_{j+1} = 0$ at $P$ as well. 
	
	If $\tilde{u}_{j+1} \neq 0$ or $\tilde{v}_{j} \neq 0$ at $P$, then, by the
	fact that $P$ is a $\Gm$-fixpoint, a direct computation using our coordinates
	shows that $\Gm$ acts trivially in an \'etale neighbourhood of $P$ in
	$X[n]_q$, and hence on $Z_{0,P}$. 
	
	If $\tilde{u}_{j+1} = \tilde{v}_{j} = 0$ at $P$, then the coordinate ring of
	$Z_{0,P}$ is spanned by $n_P$ monomials $M_{P,r}$ in the variables $
	\tilde{v}_{j}, \tilde{u}_{j+1}$ and the $z_{\alpha}$-s, with each monomial
	necessarily of degree at most $n_P$. As $\tilde{u}_{j+1}$ and $\tilde{v}_{j}$
	are semi-invariant with weights $s_{j+1}$ and $-s_{j}$, whereas the
	$z_{\alpha}$-s are invariant, it follows that the $\Gm$-weight for each
	monomial $M_{P,r}$ is of the form $- c_{r,j} s_j + c_{r,j+1} s_{j+1}$, where
	$c_{r,j}$, resp.~$c_{r,j+1}$, denotes the multiplicity of $\tilde{v}_j$,
	resp.~$\tilde{u}_{j+1}$, in $M_{P,r}$. In particular, $c_{r,j}$ and
	$c_{r,j+1}$ are bounded by $n_P$. 
	
	Now we sum over all the points $P$ in the support of $Z_0$. Since the
	integers $ n_P $ sum up to $ n $ as $P$ runs over the points in the support
	of $Z_{0}$, we arrive at the asserted expression for the weight on
	$\wedge^nH^0(\OO_{Z_0})$.
\end{proof}

The following lemma states that, under certain conditions, the combinatorial
weight will dominate the bounded weight, provided that we replace $\sheaf{L}$ by
a sufficiently high tensor power. 

\begin{lemma}\label{prop-nonred}
	Let $[Z] \in \mathbf{H}^n$ and let $\ell \gg 2 n^2$ be an integer. 
	\begin{enumerate}
		\item
			Assume, for every $\mathbf{s} \in \ZZ^n$ such that the limit of
			$\lambda_{\mathbf{s}}(\tau) \cdot Z$ as $\tau$ goes to zero exists,
			that there exist integers $b_i = b_i(\mathbf{s},Z)$ such that 
			\begin{equation*}
				\mu^{\sheaf{M}}_c(\mathbf{s},Z) = \sum_{i=1}^n b_i s_i, 
			\end{equation*}
			where $b_i s_i > 0$ if $ s_i \neq 0 $. Then $[Z] \in
			\mathbf{H}^n(\sheaf{M}_{\ell})^{s}$.
		\item
			Let $\mathbf{s} \in \ZZ^n$ be a nonzero tuple such that the limit of
			$\lambda_{\mathbf{s}}(\tau) \cdot Z$ as $\tau$ goes to zero exists.
			Assume there exist integers $b_i = b_i(\mathbf{s},Z)$ such that 
			\begin{equation*}
				\mu^{\sheaf{M}}_c(\mathbf{s},Z) = \sum_{i=1}^n b_i s_i, 
			\end{equation*}
			where $b_i s_i < 0 $ if $ s_i \neq 0$. Then $[Z] \notin
			\mathbf{H}^n(\sheaf{M}_{\ell})^{ss}$.
	\end{enumerate}
\end{lemma}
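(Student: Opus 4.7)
The plan is to feed the decomposition \eqref{eq:weightdecomp} into the relative Hilbert--Mumford criterion (Proposition \ref{prop-RelHilbMum-crit}) and show that, once $\ell > 2n^2$, the combinatorial weight dominates the bounded weight regardless of the sign pattern of $\mathbf{s}$. Recall from Section \ref{subsec-Hilblin} that
\begin{equation*}
	\mu^{\sheaf{M}_\ell}(\mathbf{s},Z)
	= \mu^{\sheaf{M}_\ell}_b(\mathbf{s},Z) + \mu^{\sheaf{M}_\ell}_c(\mathbf{s},Z)
	= \mu^{\sheaf{M}}_b(\mathbf{s},Z) + \ell\cdot\mu^{\sheaf{M}}_c(\mathbf{s},Z),
\end{equation*}
since the bounded part is insensitive to $\ell$ while the combinatorial part scales linearly.

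For part (1), fix any nontrivial $1$-PS $\lambda_{\mathbf{s}}$. If the limit of $\lambda_{\mathbf{s}}(\tau)\cdot Z$ does not exist then $\mu^{\sheaf{M}_\ell}(\mathbf{s},Z) = \infty > 0$ by definition, and nothing is to be checked. Otherwise I would invoke Lemma \ref{lemma-nonred1} to write $\mu^{\sheaf{M}}_b(\mathbf{s},Z)=\sum a_i s_i$ with $|a_i|\le 2n^2$, and the hypothesis to write $\mu^{\sheaf{M}}_c(\mathbf{s},Z)=\sum b_i s_i$ with each summand $b_i s_i$ a strictly positive integer whenever $s_i\ne 0$, hence $b_i s_i \ge |s_i|$. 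Combining,
\begin{equation*}
	\mu^{\sheaf{M}_\ell}(\mathbf{s},Z)
	\;\ge\; \ell\sum_{i:\,s_i\ne 0} |s_i| \;-\; 2n^2\sum_{i} |s_i|
	\;=\; (\ell - 2n^2)\sum_{i:\,s_i\ne 0}|s_i|.
\end{equation*}
Since $\mathbf{s}\ne \mathbf{0}$, the last sum is $\ge 1$, so $\ell > 2n^2$ yields $\mu^{\sheaf{M}_\ell}(\mathbf{s},Z) > 0$. The Hilbert--Mumford criterion then gives $[Z]\in \mathbf{H}^n(\sheaf{M}_\ell)^s$.

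For part (2), the same estimate is applied to the single nonzero tuple $\mathbf{s}$ in the hypothesis, but now each nonzero $b_i s_i$ is a strictly negative integer, so $b_i s_i \le -|s_i|$. This gives
\begin{equation*}
	\mu^{\sheaf{M}_\ell}(\mathbf{s},Z)
	\;\le\; 2n^2\sum_i |s_i| \;-\; \ell\sum_{i:\,s_i\ne 0}|s_i|
	\;=\; (2n^2 - \ell)\sum_{i:\,s_i\ne 0}|s_i|,
\end{equation*}
which is strictly negative for $\ell > 2n^2$. The Hilbert--Mumford criterion therefore excludes $[Z]$ from $\mathbf{H}^n(\sheaf{M}_\ell)^{ss}$.

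There is no real obstacle here; the only slightly delicate point is the bookkeeping in part (1), where one must remember that the hypothesis is conditional on existence of the limit and that non-existence of the limit already yields $\mu = \infty > 0$. Everything else is a mechanical combination of Lemma \ref{lemma-nonred1}, the $\ell$-linearity of the combinatorial weight, and Proposition \ref{prop-RelHilbMum-crit}.
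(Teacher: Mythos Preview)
Your proof is correct and follows essentially the same approach as the paper: combine the decomposition \eqref{eq:weightdecomp} with the bound from Lemma~\ref{lemma-nonred1} and apply the Hilbert--Mumford criterion. Your version is in fact slightly more careful, since you explicitly handle the case in part~(1) where the limit does not exist and you make the inequality $b_i s_i \ge |s_i|$ explicit, whereas the paper argues only via signs.
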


\begin{proof}
	In both cases, using the decomposition in Equation (\ref{eq:weightdecomp})
	and replacing $\sheaf{M}$ by $\sheaf{M}_{\ell}$, we can write
	\begin{equation*}
		\mu^{\sheaf{M}_{\ell}}(\mathbf{s},Z) = \sum_{i=1}^n (a_i + \ell \cdot b_i) s_i. 
	\end{equation*}
	Assume that $s_i \neq 0$. Then, by assumption, we have $b_i \neq 0$ and by
	Lemma \ref{lemma-nonred1} we know that $\vert a_i \vert \leq 2 n^2$. Since
	$\ell > 2 n^2$, it follows that $a_i + \ell \cdot b_i \neq 0$ as well, with
	the same sign as $b_i$.
	
	In case (1), this means that $\mu^{\sheaf{M}_{\ell}}(\mathbf{s},Z) > 0$ for
	any nontrivial 1-PS, so $Z$ is a stable point by Proposition
	\ref{prop-RelHilbMum-crit}. In case (2), this means that the 1-PS
	corresponding to $\mathbf{s}$ is destabilizing for $Z$.
\end{proof}

Lemma \ref{lemma-nonred1} and Lemma \ref{prop-nonred} will be crucial tools when
we analyse (semi-)stability for the $G[n]$-action on $\mathbf{H}^n$. Equipped
with these results, we will prove that, in order to show that a Hilbert point
$Z$ is either stable or unstable (but not strictly semi-stable), we may treat
$Z$ (as well as its limit $Z_0$) just as a $0$-cycle and forget its finer scheme
structure, provided we replace $\sheaf{L}$ by a sufficiently large tensor power.
What is more, we will also see that there are \emph{no} strictly semi-stable
points.

\subsection{Numerical support and combinatorial weight.}

\subsubsection{Index notation}

To any point $[Z] \in \mathbf{H}^n$ we can associate the subset 
\begin{equation*}
	I_{[Z]} = \{ i \mid t_i(Z) = 0 \} \subset [n+1], 
\end{equation*} 
where the $t_i$-s denote coordinates on $\AA^{n+1}$ as usual. As we have
explained in Section \ref{sec:fibres}, this subset determines completely the
combinatorial structure of the fibre $X[n]_q$ of $X[n]$ in which $Z$ sits as a
subscheme. Indeed, by Proposition \ref{prop:components}, the dual graph of
$X[n]_q$ can be identified with the oriented graph $\Gamma_{I_{[Z]}}$.  

For our purposes, it is useful to represent subsets also in terms of certain
tuples of positive integers. To do this, let us fix an integer $1 \leq r \leq
n+1$. Then any tuple
\begin{equation}\label{equation-distinguishedtuple}
	\mathbf{a} = (a_0,a_1, \ldots, a_r, a_{r+1}) \in \mathbb{Z}^{r+2}
\end{equation} 
such that
\begin{equation*}
	1 = a_0 \leq a_1 < \ldots < a_i < \ldots < a_r \leq a_{r+1} = n + 1 
\end{equation*}
determines the subset 
\begin{equation*}
	I_{\mathbf{a}} := \{a_1, \ldots, a_r \} \subset [n+1]. 
\end{equation*}
The values $a_0$ and $a_{n+1}$ have been added for computational convenience and
play only a formal role.

\subsubsection{Smooth support}\label{subsubsec:combsupp}

Let $[D] \colon [Y] \to [Y']$ be an arrow in the oriented graph $\Gamma$. As we
have explained in Section \ref{sec:fibres}, this arrow gets replaced in the
expanded graph $\Gamma_{I_{\mathbf{a}}}$ by a chain of $r$ arrows. The internal
(``white'') nodes in this chain are denoted $[\Delta_{I_{\mathbf{a}}}^{D,
a_{i}}]$. 

When the set $I_{\mathbf{a}}$ is understood, we shall denote by $\Delta^{a_i}$
the (disjoint) union of the components $\Delta_{I_{\mathbf{a}}}^{D, a_{i}}$ of
$X[n]_{I_{\mathbf{a}}} $, as $[D]$ runs over the arrows in $\Gamma$. In order to
get coherent notation, we also denote by $\Delta^{a_0}$ the union of the
components $Y_{I_{\mathbf{a}}}$, where $[Y]$ runs over vertices in $V^{-}$, and
by $\Delta^{a_r}$ the union of the components $Y'_{I_{\mathbf{a}}}$, where
$[Y']$ runs over the vertices in $V^{+}$.

Consider a point $[Z] \in \mathbf{H}^n$ and assume that
\begin{equation*}
	I_{[Z]} = \{ i \mid t_i(Z) = 0 \} =  I_{\mathbf{a}}. 
\end{equation*}
This means that $Z$ is a subscheme in a general fibre of $X[n]_{I_{\mathbf{a}}}
\to C[n]_{I_{\mathbf{a}}}$. To be precise; by \emph{general} we mean that no
other coordinates $t_j$ are zero. As usual, we decompose $Z$ as a disjoint union
$\union_P Z_P$, where $Z_P$ is supported in $P$ and has length $n_P$. 

\begin{definition}
	We say that $Z$ has \emph{smooth support} if each $P \in \mathrm{Supp}(Z)$
	belongs to a unique component of $ X[n]_{I_{\mathbf{a}}}$. 
\end{definition}

Consequently, when $Z$ has smooth support, there exists for each $P \in
\mathrm{Supp}(Z)$ a unique integer $0 \leq i(P) \leq r$ such that $P \in
\Delta^{a_{i(P)}}$.

\begin{definition}\label{def:numericalsupport}
	If $Z$ has smooth support, we define the \emph{numerical support} of $Z$ to
	be the tuple  
	\begin{equation*}
		\mathbf{v}(Z) = \sum_P n_P \cdot \mathbf{e}_{i(P)} \in \mathbb{Z}^{r+1}, 
	\end{equation*}
	where $\mathbf{e}_{i(P)}$ denotes the $i(P)$-th standard basis vector of $
	\mathbb{Z}^{r+1}$.
\end{definition}

In down to earth terms, the numerical support keeps track of the distribution of
the underlying cycle of $Z$ on the $\Delta^{a_i}$-s, for $0 \leq i \leq r$. 

\subsubsection{Repackaging the numerical support}

In order to  work efficiently with the numerical support, we need to introduce
some more notation. First, for fixed integers $r$ and $n$ with $1 \leq r \leq n
+ 1$, we define the set 
\begin{equation*}
	\mathcal{B} = \{ \mathbf{b} = (b_i) \in \ZZ^{r+2}
	\mid 1 = b_0 \leq \ldots \leq b_i \leq \ldots \leq b_{r+1} = n+1 \}.
\end{equation*}
We also define the set  
\begin{equation*}
	\mathcal{V} = \{ \mathbf{v} = (v_i) \in \left( \ZZ_{\geq0} \right)^{r+1}
	\mid \sum_{i=0}^r v_i = n \}. 
\end{equation*}
Observe that there is an obvious bijection of sets $\mathcal{B} \to
\mathcal{V}$ defined by 
\begin{equation*}
	\mathbf{b} = (b_0, \ldots, b_{r+1}) \mapsto \mathbf{v}_{\mathbf{b}}
	:= (b_1 - b_0, \ldots, b_{r+1} - b_r). 
\end{equation*}

Hence, if $[Z] \in \mathbf{H}^n$ is such that $I_{[Z]}$ has cardinality $r$,
then $I_{[Z]} = I_{\mathbf{a}}$ for a suitable element $\mathbf{a} \in
\mathcal{B}$. If, moreover, $Z$ has smooth support, the numerical support
$\mathbf{v}(Z)$ is an element of $\mathcal{V}$. In this situation, we shall
prove that $Z$ is semi-stable if and only if $\mathbf{v}(Z)$ equals
$\mathbf{v}_{\mathbf{a}}$.

\subsubsection{Combinatorial weights}\label{subsubsec-numprep}

We will next explain how we can use the expressions given in Proposition
\ref{prop:pointweight}, for the $\Gm$-weights for points $P \in X[n]$, to
compute the \emph{combinatorial} $\Gm$-weights of a point $[Z] \in \mathbf{H}^n$
with smooth support. 

We fix an integer $1 \leq r \leq n+1$, and a subset $I_{\mathbf{a}} \subset
[n+1]$ of cardinality $r$. We denote by $\mathbf{e}_i \in \mathbb{Z}^{r+1}$ the
$i$-th standard basis vector. For each $ k \in [n]$ and each $\mathbf{s} \in
\mathbb{Z}^{n}$, we define the value $\omega_k(\mathbf{e}_i,\mathbf{s})$ by the
following recipe:

\begin{equation}\label{def:ik-weight}
	\omega_k(\mathbf{e}_i,\mathbf{s}) = 
	\begin{cases}
		- k \cdot s_k, & 1 \leq k < a_i\\
		(\frac{n+1}{2} - k) \cdot s_k +\frac{n+1}{2} \vert s_k \vert, & a_i \leq k < a_{i+1}\\
		(n + 1 - k) \cdot s_k, & a_{i+1} \leq k \leq n\\
	\end{cases}
\end{equation}
Note that if $P \in X[n]_{I_{\mathbf{a}}}$ is a point which belongs to a unique
$\Delta^{a_{i}}$, then Proposition \ref{prop:pointweight} asserts that
\begin{equation*}
	\mu^{\sheaf{L}}(\lambda_{\mathbf{s}},P)
	= \sum_{k=1}^n \omega_k(\mathbf{e}_i,\mathbf{s}), 
\end{equation*}
assuming the limit $P_0$ of $P$ exists.

We next extend the above construction to define a function
\begin{equation*}
	\omega_k(-,\mathbf{s}) \colon \mathcal{V} \to \ZZ 
\end{equation*}
for each $k \in [n]$ and each $ \mathbf{s} \in \ZZ^n $, by setting
\begin{equation*}
	\omega_k(\mathbf{v},\mathbf{s})
	= \sum_{i=0}^r v_i \cdot \omega_k(\mathbf{e}_i,\mathbf{s}). 
\end{equation*}
Finally, we put
\begin{equation}
	\omega(\mathbf{v},\mathbf{s})
	= \sum_{k=1}^n \omega_k(\mathbf{v},\mathbf{s}). 
\end{equation}

Hence, if $[Z] \in \mathbf{H}^n $ is a point with smooth support, and if
$I_{[Z]} = I_{\mathbf{a}} $, it is immediate from Proposition
\ref{prop:pointweight} that the equality 
\begin{equation*}
	\mu_c^{\sheaf{M}_{\ell}}(\lambda_{\mathbf{s}},[Z]) = \ell \cdot \omega(\mathbf{v}(Z),\mathbf{s}) 
\end{equation*}
holds for all $\ell \geq 1$. In other words, the combinatorial weight of $Z$
only depends on its numerical support $\mathbf{v}(Z)$.

\subsubsection{Numerical computations}

We keep the notation and assumptions from Paragraph \ref{subsubsec-numprep}. In
particular, we have fixed an element $\mathbf{a} = (a_0,\ldots,a_r,a_{r+1}) \in
\mathcal{B}$, corresponding to a subset $I_{\mathbf{a}}$. 

\begin{lemma}\label{lemma:k-weight}
	Let $\mathbf{b} = (b_0, \ldots, b_r, b_{r+1})$ be an arbitrary element of
	$\mathcal{B}$ and let $\mathbf{s} \in \mathbb{Z}^{n}$. Then, for each $j \in
	\{0, \ldots, r \}$ and $a_j \leq k < a_{j+1}$, the following hold:
	\begin{enumerate}
		\item
			If $s_k \geq 0$, then 
			\begin{equation*}
				\omega_k(\mathbf{v}_{\mathbf{b}},\mathbf{s})
				= - \vert s_k \vert \cdot \left( (k+1-b_{j+1}) (n+1) - k \right). 
			\end{equation*}
		\item
		If $s_k \leq 0$, then 
		\begin{equation*}
			\omega_k(\mathbf{v}_{\mathbf{b}},\mathbf{s})
			= \vert s_k \vert \cdot \left( (k+1-b_{j}) (n+1) - k \right). 
		\end{equation*}
	\end{enumerate}
\end{lemma}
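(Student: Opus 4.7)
The proof is a direct computation that I would organize as follows. Fix $k$ with $a_j \le k < a_{j+1}$ as in the statement. The key observation is that for each $i \in \{0,\dots,r\}$, the value $\omega_k(\mathbf{e}_i,\mathbf{s})$ given by the piecewise definition \eqref{def:ik-weight} depends only on where $k$ sits relative to $a_i$ and $a_{i+1}$. Since the $a_i$ are strictly increasing, there are exactly three regimes:
\begin{itemize}
\item for $i<j$ we have $a_{i+1}\le a_j\le k$, so $\omega_k(\mathbf{e}_i,\mathbf{s})=(n+1-k)s_k$;
\item for $i=j$ we have $a_j\le k<a_{j+1}$, so $\omega_k(\mathbf{e}_j,\mathbf{s})=(\tfrac{n+1}{2}-k)s_k+\tfrac{n+1}{2}|s_k|$;
\item for $i>j$ we have $k<a_{j+1}\le a_i$, so $\omega_k(\mathbf{e}_i,\mathbf{s})=-k\,s_k$.
\end{itemize}

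Next I would insert these into the defining sum $\omega_k(\mathbf{v}_{\mathbf{b}},\mathbf{s})=\sum_{i=0}^r(b_{i+1}-b_i)\,\omega_k(\mathbf{e}_i,\mathbf{s})$ and exploit the telescoping identities $\sum_{i=0}^{j-1}(b_{i+1}-b_i)=b_j-1$ and $\sum_{i=j+1}^{r}(b_{i+1}-b_i)=(n+1)-b_{j+1}$. Collecting terms separately by their coefficient of $s_k$ and of $|s_k|$, one arrives at the uniform identity
\begin{equation*}
\omega_k(\mathbf{v}_{\mathbf{b}},\mathbf{s})
=s_k\Bigl[\tfrac{n+1}{2}(b_j+b_{j+1})-(n+1)(k+1)+k\Bigr]
+|s_k|\,\tfrac{n+1}{2}(b_{j+1}-b_j),
\end{equation*}
where the cross-terms $\pm kb_j$ and $\pm kb_{j+1}$ cancel cleanly.

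Finally I would specialize. When $s_k\ge 0$, writing $|s_k|=s_k$ makes the $b_j$ contributions cancel, leaving the coefficient $(n+1)b_{j+1}-(n+1)(k+1)+k$ of $s_k$, which equals $-[(k+1-b_{j+1})(n+1)-k]$; this gives (1). When $s_k\le 0$, writing $|s_k|=-s_k$ makes the $b_{j+1}$ contributions cancel, leaving the coefficient $(n+1)b_j-(n+1)(k+1)+k$ of $s_k$, which rewrites as $|s_k|[(k+1-b_j)(n+1)-k]$; this gives (2). There is no substantive obstacle: the entire argument is a bookkeeping exercise, and the only minor care required is tracking the boundary cases $j=0$ and $j=r$, where one of the two telescoping sums is empty but the formulas for $b_0=1$ and $b_{r+1}=n+1$ ensure that the stated expressions continue to hold.
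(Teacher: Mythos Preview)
Your proof is correct and follows essentially the same approach as the paper: both split the sum over $i$ into the three regimes $i<j$, $i=j$, $i>j$ using \eqref{def:ik-weight}, then substitute $v_i=b_{i+1}-b_i$ and telescope. You give slightly more detail (the explicit intermediate identity in $s_k$ and $|s_k|$, and the remark on the boundary cases $j=0,r$), but the argument is the same direct computation the paper sketches.
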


\begin{proof}
	For any element $\mathbf{v} = (v_0, \ldots, v_r)$ of $\mathcal{V}$, a direct
	computation using Equation (\ref{def:ik-weight}) shows that
	$\omega_k(\mathbf{v}, \mathbf{s}) $ equals
	\begin{equation*}
		\sum_{i = 0}^{j-1} v_i \cdot (n+1-k) s_k + v_j \cdot
		\bigg(\big(\frac{n+1}{2}-k\big) s_k + \frac{n+1}{2} \vert s_k \vert\bigg)
		- \sum_{i = j+1}^r v_i \cdot k s_k. 
	\end{equation*}
	Substituting $v_i = b_{i+1}-b_i$ for each $i \in \{0, \ldots, r \}$ easily
	yields the expressions in case (1) and (2). 
\end{proof}

The following result is a key ingredient in analysing (semi-)stability for
points $[Z]$ with smooth support. In particular, it implies that
$\omega_k(\mathbf{v}_{\mathbf{a}},\mathbf{s}) \geq 0$ for all $\mathbf{s} \in
\ZZ^n$, with equality if and only if $s_k=0$.

\begin{lemma}\label{lemma-num}
	Let $\mathbf{b} = (b_0,b_1, \ldots, b_r,b_{r+1}) \in \mathcal B$ and assume,
	for all $j$ and for all $k$ with $a_j \leq k < a_{j+1}$, that the
	inequalities
	\begin{enumerate}
		\item $(k+1-b_{j+1})(n+1) - k \leq 0$
		\item $(k+1-b_j)(n+1) - k \geq 0$
	\end{enumerate}
	are satisfied. Then $\mathbf{b}$ is equal to the fixed element $\mathbf{a}$.
	Moreover, if this is the case, all inequalities are strict.
\end{lemma}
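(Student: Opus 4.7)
The plan is to read each of conditions (1) and (2) as an integrality constraint on $b_{j+1}$ respectively $b_j$, then assemble the resulting inequalities into a sandwich that forces $\mathbf{b} = \mathbf{a}$, and finally plug back in to check strictness.

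First I would translate condition (1) into $b_{j+1} \geq k+1$. Indeed, since $k \leq n$ and $n+1$ is a positive integer, the inequality $(k+1 - b_{j+1})(n+1) \leq k \leq n < n+1$ forces the integer $k+1 - b_{j+1}$ to be $\leq 0$. Applying this at the largest admissible $k$ in the range, namely $k = a_{j+1}-1$, yields $b_{j+1} \geq a_{j+1}$ (whenever the range $a_j \leq k < a_{j+1}$ is nonempty). Symmetrically, condition (2) gives $(k+1-b_j)(n+1) \geq k \geq 1 > 0$, so $k+1 - b_j \geq 1$, i.e.\ $b_j \leq k$; taking $k = a_j$ (again when the range is nonempty) gives $b_j \leq a_j$.

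Second I would chain these inequalities. For each index $j \in \{1,\dots,r\}$, condition (1) applied at $j-1$ (if its range is nonempty) gives $b_j \geq a_j$, and condition (2) applied at $j$ (if its range is nonempty) gives $b_j \leq a_j$, so $b_j = a_j$. The cases where a range is empty must be inspected separately: this happens only when $a_0 = a_1$ or $a_r = a_{r+1}$, i.e.\ at the boundary indices. But in those situations the required equality $b_1 = a_1 = 1$ (respectively $b_r = a_r = n+1$) is already forced by the monotonicity constraint $1 = b_0 \leq b_1$ (respectively $b_r \leq b_{r+1} = n+1$) built into $\mathcal B$. Since $b_0 = a_0 = 1$ and $b_{r+1} = a_{r+1} = n+1$ by definition, this yields $\mathbf{b} = \mathbf{a}$.

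Finally, for strictness I substitute $\mathbf{b} = \mathbf{a}$ into (1) and (2). For (1): since $k < a_{j+1}$, the integer $k+1 - a_{j+1}$ is $\leq 0$, so $(k+1-a_{j+1})(n+1) - k \leq -k \leq -1 < 0$. For (2): since $k \geq a_j$, the integer $k+1 - a_j$ is $\geq 1$, so $(k+1-a_j)(n+1) - k \geq (n+1) - k \geq 1 > 0$. Both are strict.

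The argument is essentially just careful bookkeeping; the only point where one must pause is the edge case $a_0 = a_1$ or $a_r = a_{r+1}$, where one of conditions (1) or (2) is vacuous for a boundary $j$ and strictness/equality has to be recovered from the definition of $\mathcal B$ rather than from the hypotheses of the lemma.
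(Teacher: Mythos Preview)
Your proof is correct and follows the same strategy as the paper: substitute the extremal values $k = a_{j+1}-1$ into (1) and $k = a_j$ into (2) to sandwich $b_j$ between $a_j$ and $a_j$. Your extraction of the integer inequality is in fact a bit cleaner than the paper's (which rewrites (1) as $a_{j+1} \leq b_{j+1} + \tfrac{b_{j+1}-1}{n}$ and then bounds the fractional part), and you handle the boundary cases $a_0=a_1$ and $a_r=a_{r+1}$ explicitly whereas the paper passes over them in silence.
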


\begin{proof}
We first consider the case where $ \mathbf{b} = \mathbf{a} $. Then the
\emph{strict} inequalities
\begin{equation*}
	(k+1-a_{j+1})(n+1) - k < 0 
\end{equation*}
and
\begin{equation*}
	(k+1-a_j)(n+1) - k > 0 
\end{equation*}
are immediate from the choice of $k$.

Now let $\mathbf{b}$ be an element in $\mathcal B$, and assume that (1) and (2)
both hold for all $j$ and all $a_j \leq k < a_{j+1}$. We will show that this
implies $\mathbf{b} = \mathbf{a}$. If we put $k = a_{j+1} - 1$ in (1), we find
that
\begin{equation*}
	(a_{j+1} - b_{j+1})(n+1) \leq a_{j+1}-1 
\end{equation*}
which can be rewritten as
\begin{equation*}
	a_{j+1} \leq b_{j+1} + \frac{b_{j+1}-1}{n}. 
\end{equation*}
But observe that either $b_{j+1} = n+1$ or the inequality
\begin{equation*}
	0 \leq \frac{b_{j+1}-1}{n} < 1 
\end{equation*}
holds. In both cases, we get
\begin{equation*}
	a_{j+1} \leq b_{j+1}. 
\end{equation*}

If we instead put $k = a_j$, then (2) yields
\begin{equation*}
	(a_j +1 - b_j)(n+1) \geq a_j 
\end{equation*}
which can be rewritten as
\begin{equation*}
	a_j \geq b_j + \frac{b_j-1}{n} - 1.
\end{equation*}
But either $b_j = 1$, or
\begin{equation*}
	-1 < \frac{b_j-1}{n} - 1 \leq 0 
\end{equation*}
holds. In both cases, it is true that
\begin{equation*}
	a_j \geq b_j. 
\end{equation*}
It follows that $\mathbf{b} = \mathbf{a}$.
\end{proof}

We shall also need the following lemma, in order to analyse the combinatorial
$\mathbb{G}_m$-weights of points $[Z] \in \mathbf{H}^n$ which do \emph{not} have
smooth support.

\begin{lemma}\label{lemma-weight-sing}
	Let $P \in X[n]_{I_{\mathbf{a}}}$ and assume that $P \in \Delta^{a_j}$ for
	some $j \in \{0, \ldots, r \}$. If $P$ is not a smooth point of
	$X[n]_{I_{\mathbf{a}}}$, the inequality 
	\begin{equation*}
		\mu^{\sheaf{L}}(\lambda_{\mathbf{s}},P)
		\leq \sum_{k=1}^n \omega_k(\mathbf{e}_j,\mathbf{s}) 
	\end{equation*}
	holds for every $\mathbf{s} \in \mathbb{Z}^n$.
\end{lemma}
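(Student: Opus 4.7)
The plan is a direct term-by-term comparison: apply Proposition \ref{prop:pointweight}(c) to express $\mu^{\sheaf{L}}(\lambda_{\mathbf{s}},P)$, then compare with the combinatorial formula \eqref{def:ik-weight} for $\omega_k(\mathbf{e}_j,\mathbf{s})$.

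The first step identifies the local geometry near $P$. By the normal-crossings-without-triple-intersections property in Proposition \ref{prop:components}(a), the singular point $P$ lies on exactly two components of $X[n]_{I_{\mathbf{a}}}$. Since one of these is $\Delta^{a_j}$, and the components of each expanded chain are linearly ordered, the other must be $\Delta^{a_{j+1}}$ or $\Delta^{a_{j-1}}$ (with only the first possibility at $j=0$ and only the second at $j=r$). This splits the argument into two symmetric cases.

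Applying Proposition \ref{prop:pointweight}(c) to the consecutive pair in $I_{\mathbf{a}}\cup\{0,n+1\}$ corresponding to the chosen intersection gives an expression for $\mu^{\sheaf{L}}(\lambda_{\mathbf{s}},P)$ with a single split point: at $a_{j+1}$ in the case $P\in\Delta^{a_j}\cap\Delta^{a_{j+1}}$, and at $a_j$ in the case $P\in\Delta^{a_{j-1}}\cap\Delta^{a_j}$. The target expression $\sum_k\omega_k(\mathbf{e}_j,\mathbf{s})$, on the other hand, has two split points, at $a_j$ and $a_{j+1}$, with a distinguished middle window $a_j\leq k<a_{j+1}$ in which the defining formula \eqref{def:ik-weight} produces the extra contribution $\tfrac{n+1}{2}|s_k|$.

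Outside this middle window the two expressions already coincide term-by-term; inside it the difference reduces to either
\[
\tfrac{n+1}{2}\bigl(s_k+|s_k|\bigr)\qquad\text{(in the case $P\in\Delta^{a_j}\cap\Delta^{a_{j+1}}$)}
\]
or
\[
\tfrac{n+1}{2}\bigl(|s_k|-s_k\bigr)\qquad\text{(in the case $P\in\Delta^{a_{j-1}}\cap\Delta^{a_j}$)},
\]
both manifestly nonnegative. Summing these pointwise inequalities over $k$ yields the claimed bound. The only delicate aspect is bookkeeping: reconciling the position-based labels $a_0,\dots,a_r$ introduced in Section \ref{subsubsec:combsupp} (where $a_0=1$ indexes the source component) with the value-based indexing in Proposition \ref{prop:pointweight} (where consecutive pairs are taken in $I_{\mathbf{a}}\cup\{0,n+1\}$). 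I expect no substantive obstacle beyond this translation.
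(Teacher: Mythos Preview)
Your proposal is correct and follows essentially the same approach as the paper: both split into the two cases $P\in\Delta^{a_{j-1}}\cap\Delta^{a_j}$ and $P\in\Delta^{a_j}\cap\Delta^{a_{j+1}}$, apply Proposition~\ref{prop:pointweight}(c), observe that the contributions agree outside the window $a_j\le k<a_{j+1}$, and verify the termwise inequality inside it. Your explicit computation of the differences $\tfrac{n+1}{2}(|s_k|\pm s_k)$ is exactly what the paper's one-line ``the inequality $\widetilde{\omega}_k(\mathbf{s},P)\le\omega_k(\mathbf{e}_j,\mathbf{s})$ holds'' unpacks to.
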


\begin{proof}
	By Proposition \ref{prop:pointweight}, we can write
	\begin{equation*}
		\mu^{\sheaf{L}}(\lambda_{\mathbf{s}},P)
		= \sum_{k=1}^n \widetilde{\omega}_k(\mathbf{s},P), 
	\end{equation*}
	where $\widetilde{\omega}_k(\mathbf{s},P) =
	\omega_k(\mathbf{e}_j,\mathbf{s})$ unless $a_j \leq k < a_{j+1}$. For $k$ in
	this range, one computes that $\widetilde{\omega}_k(\mathbf{s},P) = (n+1-k)
	s_k$ if $P \in \Delta^{a_{j-1}} \intsct \Delta^{a_j}$, and that
	$\widetilde{\omega}_k(\mathbf{s},P) = -k s_k$ if $P \in \Delta^{a_j} \intsct
	\Delta^{a_{j+1}}$. In both cases, the inequality
	$\widetilde{\omega}_k(\mathbf{s},P) \leq \omega_k(\mathbf{e}_j,\mathbf{s})$
	holds, and the assertion follows.
\end{proof}

\subsection{The semi-stable locus}

We are now ready to present our main result in this section, namely  a complete
description of the (semi-)stable locus in $\mathbf{H}^n$ with respect to the
$G[n]$-linearized sheaf $\sheaf{M}_{\ell}$, for any integer $\ell \gg 2n^2$. 

Let $[Z] \in \mathbf{H}^n$, and assume that the associated subset 
\begin{equation*}
	I_{[Z]} \subset [n+1] 
\end{equation*} 
has cardinality $r$. We denote by $\mathbf{a} \in \mathcal{B} $ (where
$\mathcal{B} $ depends on the values $n$ and $r$) the unique element such that
$I_{[Z]} = I_{\mathbf{a}}$. 

\begin{theorem}\label{theorem-stablelocus}
	Let $\ell \gg 2n^2$. The (semi-)stable locus in $\mathbf{H}^n$ with respect
	to $\sheaf{M}_{\ell}$ can be described as follows:
	\begin{enumerate}
		\item
			If $[Z] \in \mathbf{H}^n $ has smooth support, then $[Z] \in
			\mathbf{H}^n(\sheaf{M}_{\ell})^{ss}$ if and only if 
			\begin{equation*}
				\mathbf{v}(Z) = \mathbf{v}_{\mathbf{a}}. 
			\end{equation*} 
			In this case, it also holds that $[Z] \in
			\mathbf{H}^n(\sheaf{M}_{\ell})^{s}$.
		\item
			If $[Z] \in \mathbf{H}^n$ does not have smooth support, then $[Z]
			\notin \mathbf{H}^n(\sheaf{M}_{\ell})^{ss}$.
	\end{enumerate}
\end{theorem}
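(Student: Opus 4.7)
The plan is to apply the relative Hilbert--Mumford criterion (Proposition \ref{prop-RelHilbMum-crit}) to the weight decomposition $\mu^{\sheaf{M}_\ell}(\mathbf{s}, Z) = \mu^{\sheaf{M}}_b(\mathbf{s}, Z) + \ell\, \mu^{\sheaf{M}}_c(\mathbf{s}, Z)$ and to exploit Lemma \ref{prop-nonred}: for $\ell > 2n^2$, stability is detected purely by the combinatorial weight $\mu^{\sheaf{M}}_c$. On smooth support, $\mu^{\sheaf{M}}_c(\lambda_\mathbf{s}, Z) = \ell\,\omega(\mathbf{v}(Z), \mathbf{s})$ by the discussion in Section \ref{subsubsec-numprep}, and the sign behaviour of $\omega$ is controlled by Lemmas \ref{lemma:k-weight} and \ref{lemma-num}.

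\textbf{Part (1).} Write $\mathbf{v}(Z) = \mathbf{v}_\mathbf{b}$ for some $\mathbf{b}\in\mathcal{B}$. If $\mathbf{b} = \mathbf{a}$, the strict inequalities in Lemma \ref{lemma-num} combined with the formulas of Lemma \ref{lemma:k-weight} show that for every $k$ with $s_k\neq 0$, $\omega_k(\mathbf{v}_\mathbf{a}, \mathbf{s}) = b_k(\mathbf{s})\, s_k$ with $b_k(\mathbf{s})\, s_k > 0$; summing over $k$, Lemma \ref{prop-nonred}(1) gives stability. Conversely, suppose $\mathbf{b} \neq \mathbf{a}$. Then Lemma \ref{lemma-num} yields a violated inequality at some pair $(j, k_0)$ with $a_j \leq k_0 < a_{j+1}$. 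When inequality (1) is violated (which forces $j \leq r-1$, since the $j=r$ case is automatic), I set $s_k = 1$ for $k_0 \leq k \leq a_{j+1}-1$ and $s_k = 0$ otherwise; when (2) is violated (forcing $j \geq 1$), I set $s_k = -1$ for $a_j \leq k \leq k_0$ and $s_k = 0$ otherwise. The choice of endpoints ensures $s_{i-1} \leq s_i$ for all $i \notin I_\mathbf{a}$, so the limit exists by Proposition \ref{prop:pointweight}(a); monotonicity in $k$ of the expressions $(k+1-b_{j+1})(n+1)-k$ and $(k+1-b_j)(n+1)-k$ propagates the violation across the entire block, so Lemma \ref{lemma:k-weight} gives $\omega_k(\mathbf{v}_\mathbf{b}, \mathbf{s}) = b_k s_k$ with $b_k s_k < 0$ for each $k$ in the support of $\mathbf{s}$. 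Lemma \ref{prop-nonred}(2) then yields instability.

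\textbf{Part (2) and main obstacle.} Suppose $Z$ has a point $P_0 \in \Delta^{D,a_{j-1}}_{I_\mathbf{a}} \intsct \Delta^{D,a_j}_{I_\mathbf{a}}$ in the singular locus of $X[n]_{I_\mathbf{a}}$. I form two candidate numerical vectors $\mathbf{v}^\pm \in \mathcal{V}$ by assigning $P_0$ to $\Delta^{a_j}$ (resp.~$\Delta^{a_{j-1}}$) and making arbitrary fixed assignments for the remaining singular points of $Z$. Since $\mathbf{v}^+ - \mathbf{v}^- = n_{P_0}(\mathbf{e}_j - \mathbf{e}_{j-1}) \neq 0$, at least one vector $\mathbf{v}^*$ differs from $\mathbf{v}_\mathbf{a}$. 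Lemma \ref{lemma-weight-sing} implies $\mu^{\sheaf{M}}_c(\mathbf{s}, Z) \leq \omega(\mathbf{v}^*, \mathbf{s})$ for every $\mathbf{s}$; applying the destabilizing construction from part (1) to $\mathbf{v}^*$ produces an $\mathbf{s}$ for which $\omega(\mathbf{v}^*, \mathbf{s}) = \sum_k b_k s_k$ with $b_k s_k < 0$ on the support of $\mathbf{s}$. Combining with Lemma \ref{lemma-nonred1}, for $\ell > 2n^2$ we obtain
\begin{equation*}
   \mu^{\sheaf{M}_\ell}(\mathbf{s}, Z) \leq \sum_k (a_k + \ell b_k) s_k < 0,
\end{equation*}
so $Z$ is not semi-stable. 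The main obstacle is the bookkeeping in part (1): one must correctly translate a violation of Lemma \ref{lemma-num}'s inequalities into a one-parameter subgroup that simultaneously admits a limit in $\mathbf{H}^n$ and has the destabilizing sign propagated uniformly across an entire block $[a_j, a_{j+1})$ via monotonicity; once this is in place, part (2) follows cleanly from Lemma \ref{lemma-weight-sing}.
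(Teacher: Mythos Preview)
Your proof is correct and follows the same overall strategy as the paper: reduce to the combinatorial weight via Lemma \ref{prop-nonred}, use Lemma \ref{lemma-num} for the stable case, and for the unstable case exhibit an explicit destabilizing one-parameter subgroup, finally handling non-smooth support by comparing with two candidate numerical vectors through Lemma \ref{lemma-weight-sing}.

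The one genuine difference is your construction of the destabilizing $1$-PS in part (1). The paper, when (say) $a_{j+1} > b_{j+1}$, takes $s_{a_{j+1}-1} = d$ large and fills in the block $[a_j, a_{j+1}-1]$ with a staircase $s_k = s_{k-1}+1$, so that the single term at $k = a_{j+1}-1$ dominates for $d \gg 0$. You instead observe that the expression $(k+1-b_{j+1})(n+1)-k$ is monotone increasing in $k$, so a violation at any $k_0$ propagates to all larger $k$ in the block; this lets you take the constant vector $s_k = 1$ on $[k_0, a_{j+1}-1]$ and get \emph{every} term $\omega_k$ negative, with no asymptotic argument needed. Your version is cleaner and feeds directly into Lemma \ref{prop-nonred}(2) without the bounding step. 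The paper's staircase has the minor advantage that it does not require identifying the monotonicity, but both arrive at the same conclusion. For part (2) your argument is essentially identical to the paper's (the paper fixes a specific $j_{\min}$ where you make an arbitrary assignment, but this is immaterial).
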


\begin{proof}
	We consider first the case where $Z$ has smooth support. If $\mathbf{v}(Z) =
	\mathbf{v}_{\mathbf{a}}$, Lemma \ref{lemma-num} states that
	$\mu_c^{\sheaf{M}}(\lambda_{\mathbf{s}},Z) > 0$ for every nontrivial $1$-PS
	$\lambda_{\mathbf{s}}$ such that the limit of $Z$ exists. This implies, by
	Lemma \ref{prop-nonred}, that the same statement holds for $
	\mu^{\sheaf{M}_{\ell}}(\lambda_{\mathbf{s}},Z) $. Thus $[Z] \in
	\mathbf{H}^n(\sheaf{M}_{\ell})^s $ by Proposition \ref{prop-RelHilbMum-crit}. 
	
	Assume instead that $\mathbf{v}(Z) = \mathbf{v}_{\mathbf{b}}$ for some
	element $\mathbf{b} \in \mathcal{B} $ where $ \mathbf{b} \neq \mathbf{a}$. In
	this case we will produce an explicit $1$-PS which is destabilizing for the
	Hilbert point $Z$. 
	
	Assume first that $a_{j+1} > b_{j+1}$, and put $\kappa = a_{j+1} - 1$. Then 
	\begin{equation*}
		(\kappa + 1 - b_{j+1})(n+1) - \kappa
		= (a_{j+1} - b_{j+1})(n+1) - (a_{j+1}-1) > 0. 
	\end{equation*}
	For $d \gg 0$, we define $\mathbf{s} = \mathbf{s}(d) \in \mathbb{Z}^n$ as
	follows. We put $s_i = 0 $, unless $a_j \leq i < a_{j+1}$. We moreover put
	$s_{a_{j+1} - 1} = d $, and, unless $a_{j+1} - 1 = a_j$, we put $s_{a_j} =
	0$. Then we define, inductively, $s_k = s_{k-1} + 1$ for
	$a_j < k < a_{j+1} - 1$. Now we find that the expression
	$\sum_{k=a_j}^{a_{j+1}-2} \omega_k(\mathbf{v}(Z),\mathbf{s})$ is bounded,
	independently of $d$. On the other hand,
	\begin{equation*}
		\omega_{a_{j+1} - 1}(\mathbf{v}(Z),\mathbf{s})
		=  - ((a_{j+1} - b_{j+1})(n+1) - (a_{j+1}-1)) \cdot d < 0. 
	\end{equation*}
	Hence, choosing $d$ sufficiently large yields the desired $1$-PS.
	
	Assume instead that $b_j > a_j$, and set $\kappa = a_j$. Then
	\begin{equation*}
		(\kappa +1 - b_j)(n+1) - \kappa = (a_j + 1 - b_j)(n+1) - a_j < 0. 
	\end{equation*}
	For $d \ll 0$, we define $ \mathbf{s} = \mathbf{s}(d) \in \mathbb{Z}^n$ as
	follows. Put $s_{a_j} = d \ll 0$. Unless $a_{j+1} - 1 = a_j$, we put
	$s_{a_{j+1}-m} = - m$ whenever $1 \leq m \leq a_{j+1} - (a_j + 1)$. Set all
	remaining $s_i = 0$. A similar argument as in the previous case shows that
	this yields a destabilizing $1$-PS for $Z$.
	
	It remains to consider the case where $Z$ does not have smooth support. As
	usual, let $Z = \union_P Z_P$ be the decomposition of $Z$ into local
	subschemes of length $n_P$. We construct two distinct vectors $\mathbf{v}'$
	and $ \mathbf{v}''$ in $\mathcal{V}$ as follows. 
	
	If $P$ belongs to a unique component $\Delta^{a_j}$, we set $\mathbf{v}_P' =
	\mathbf{v}_P'' = n_P \cdot \mathbf{e}_j$. Let $j_{\mathrm{min}}$ be the
	smallest index in $\{0, \ldots, r-1\}$ such that there is at least one point
	in the support of $Z$ belonging to the intersection of
	$\Delta^{a_{j_{\mathrm{min}}}}$ and $\Delta^{a_{j_{\mathrm{min}}+1}}$. For
	each such point $P$, we set $\mathbf{v}_P' = n_P \cdot
	\mathbf{e}_{j_{\mathrm{min}}}$ and $\mathbf{v}_P'' = n_P \cdot
	\mathbf{e}_{j_{\mathrm{min}}+1}$. Finally, if $P$ is a point in the
	intersection of two components $\Delta^{a_j}$ and $\Delta^{a_{j+1}}$ where $j
	> j_{\mathrm{min}}$, we set $\mathbf{v}_P' = \mathbf{v}_P'' = n_P \cdot
	\mathbf{e}_j$.
	
	We now define $\mathbf{v}' := \sum_P \mathbf{v}_P'$ and $\mathbf{v}'' :=
	\sum_P \mathbf{v}_P''$, where the sum runs over all points in the support of
	$Z$. By Lemma \ref{lemma-weight-sing}, both the inequalities
	$\omega(\mathbf{v}(Z),\mathbf{s}) \leq \omega(\mathbf{v}',\mathbf{s})$ and
	$\omega(\mathbf{v}(Z),\mathbf{s}) \leq \omega(\mathbf{v}'',\mathbf{s})$ hold.
	Since $\mathbf{v}' \neq \mathbf{v}''$, at least one of them is different from
	$\mathbf{v}_{\mathbf{a}}$. Hence we can construct a $1$-PS such that $Z$ has
	a limit $Z_0$ in $X[n]$, and which is destabilizing for $Z$, in the same
	fashion as above.
\end{proof}

\subsection{Necessity of bipartite assumption}

We conclude this section by exhibiting an example which shows that the
bipartite condition is in fact crucial. When $\Gamma(X_0)$ has no directed
cycles, but is not necessarily bipartite, the construction of $X[n]$ in
Proposition \ref{prop:projectivity-criterion} by blowing up (invariant) Weil divisors,
immediately leads to (essentially canonical) linearized ample line bundles on
$X[n]$. The ample line bundle we have constructed in the bipartite case is
indeed of this form, but the $G[n]$-action on it  has been modified. This
modified linearization only works in the bipartite situation. The following
example shows that our set-up cannot be extended, at least not simply through a
clever choice of linearization, beyond the bipartite situation.

\begin{figure}
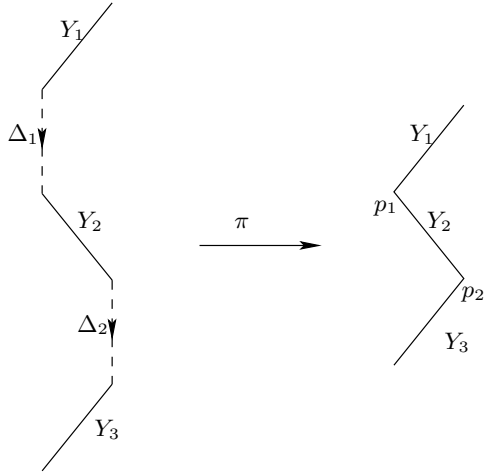

	\centering
	\input nonbipartite.pspdftex
	\caption{$X[1]$ for a non bipartite orientation}\label{fig:nonbipartite}
\end{figure}

\begin{example}\label{ex:bipartite}
	Let $X\to C$ be a curve degeneration with dual graph $\Gamma(X_0)$ of the
	form
	\begin{equation*}
		\bullet \to \bullet \to \bullet
	\end{equation*}
	and choose the (non bipartite) orientation shown. Consider the canonical map
	$\pi\colon X[1]\to X$. We claim: there is no linearization on $X[1]$ such
	that
	\begin{itemize}
		\item[(i)]
			the semi-stable locus $X[1]^{\mathrm{ss}}$ is contained in the smooth
			locus $X[1]^{\mathrm{sm}}$ over $C[1]$, and
		\item[(ii)]
			the image $\pi(X[1]^{\mathrm{ss}})\subset X$ contains the singular
			points of $X$.
	\end{itemize}
	Clearly, the latter condition is  necessary if we also want to capture cycles
	supported on the singular locus of $X_0$. To see this, consider Figure
	\ref{fig:nonbipartite}, showing the degenerate fibre $X[1]_0$ with its
	``old'' components $Y_1$, $Y_2$, $Y_3$, and the ``new'' components $\Delta_1$
	and $\Delta_2$, together with the canonical map to $X_0$. The group
	$G[1]=\Gm$ acts on $\Delta_i$ as indicated by the arrow, whereas $Y_i$ are
	pointwise fixed. For each of the singular points $p_i$ in $X_0$, we have
	\begin{equation*}
		\pi^{-1}(p_i) \intsct X[1]^{\mathrm{sm}} = \Delta_i^\circ
	\end{equation*}
	(where $\Delta_i^\circ$ denotes the interior of $\Delta_i$ in $X[1]_0$). So
	for condition (ii) to hold, the orbits $\Delta_i^\circ$ must be semi-stable.
	Now the $\Gm$-weight on any linearized line bundle is constant along the
	pointwise fixed component $Y_2$, and it cannot be zero, since then
	$Y_2\intsct \Delta_i$ would be semi-stable, violating (i). By the
	Hilbert--Mumford criterion, $\Delta_1^\circ$ is semi-stable only if that
	weight is nonpositive, and $\Delta_2^\circ$ is semi-stable only if that
	weight is nonnegative. This is a contradition.
\end{example}


\section{The quotients}\label{sec:quotients}

In this section, we introduce the stack quotient $\mathcal{I}^n_{X/C}$ and the
GIT quotient $I^n_{X/C}$ of $\mathbf{H}^n(\sheaf{M}_{\ell})^{s}$ by $G[n]$,
where $\ell \gg 0$. We show in Theorem \ref{prop-GITquotient} that
$\mathcal{I}^n_{X/C}$ is proper over $C$, with coarse moduli space $I^n_{X/C}$
(which is projective over $C$). We moreover demonstrate in Theorem
\ref{prop:comparison} that $\mathcal{I}^n_{X/C}$ is isomorphic, as a DM stack
over $C$, to the stack $\mathcal{I}^{P}_{\mathfrak{X}/\mathfrak{C}}$ introduced
by Li and Wu (cf.~e.g.~\cite{LW-2011}), when $P$ is the constant Hilbert
polynomial $n$.

\subsection{Stack quotient and GIT quotient}\label{subsec-twoquotients}

Let $X \to C$ denote a projective simple degeneration, where $C = \Spec A$ is a
smooth affine curve over $k$. We assume that $\Gamma(X_0)$ allows a bipartite
orientation, and we fix one of the two possible such orientations. For any
integer $n > 0$, the expansion 
\begin{equation*}
	X[n] \to C[n] 
\end{equation*}
induces a $G[n]$-equivariant morphism
\begin{equation*}
	\mathbf{H}^n = \Hilb^n(X[n]/C[n]) \to C[n]. 
\end{equation*}
For any integer $\ell \gg 0$, we defined in \ref{subsec-Hilblin} a
$G[n]$-linearized ample line bundle $\sheaf{M}_{\ell}$ on $ \mathbf{H}^n$.

Theorem \ref{theorem-stablelocus} provides, when $\ell \gg 2 n^2$, an explicit
description of the subset 
\begin{equation*}
	\mathbf{H}^n(\sheaf{M}_{\ell})^{s}= \mathbf{H}^n(\sheaf{M}_{\ell})^{ss} \subset \mathbf{H}^n 
\end{equation*}
of (semi-)stable points. As the (semi-)stable locus is independent of the choice
of $\ell$, we will in the sequel denote this set simply by
$\mathbf{H}^n_{\mathrm{GIT}}$.

\begin{definition}\label{def-quot}
	We define the following two quotients:
	\begin{enumerate}
		\item
			The \emph{GIT quotient}
			\begin{equation*}
				I^n_{X/C} = \mathbf{H}^n_{\mathrm{GIT}}/G[n]. 
			\end{equation*} 
		\item
			The \emph{stack quotient} 
			\begin{equation*}
				 \mathcal{I}^n_{X/C} = [\mathbf{H}^n_{\mathrm{GIT}}/G[n]]. 
			\end{equation*} 
	\end{enumerate}
\end{definition}

\begin{theorem}\label{prop-GITquotient}
	The GIT quotient $I^n_{X/C}$ is projective over $C$. The stack
	$\mathcal{I}^n_{X/C}$ is a Deligne-Mumford stack, proper and of finite type
	over $C$, having $I^n_{X/C}$ as coarse moduli space.
\end{theorem}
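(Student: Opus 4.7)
The plan is to assemble the theorem from four ingredients: the relative GIT formalism recalled in Section \ref{subsec-relgit}, the identification $C[n]/\!/G[n] = C$, the equality $\mathbf{H}^n(\sheaf{M}_\ell)^{ss} = \mathbf{H}^n(\sheaf{M}_\ell)^{s}$ furnished by Theorem \ref{theorem-stablelocus}, and standard properties of coarse moduli spaces of quotient stacks.

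For the projectivity of $I^n_{X/C}$ over $C$, I would apply the relative GIT machinery to the $G[n]$-equivariant projective morphism $\mathbf{H}^n \to C[n]$ equipped with the relatively ample linearized sheaf $\sheaf{M}_\ell$ (which exists for $\ell \gg 0$ by Section \ref{subsec-Hilblin}), producing a GIT quotient that is projective over $C[n]/\!/G[n]$. The key computation is then that $C[n]/\!/G[n]=C$. Since $C[n] = C \times_{\AA^1} \AA^{n+1}$ with $G[n]$ acting trivially on the $C$-factor, it suffices to check $\AA^{n+1}/\!/G[n] = \AA^1$; this holds because the ring of $G[n]$-invariants in $k[t_1, \dots, t_{n+1}]$ is generated by the single monomial $t_1 \cdots t_{n+1}$, which is precisely the coordinate on $\AA^1$ used to form the fibre product $C[n]$.

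For the stack $\mathcal{I}^n_{X/C}$, Theorem \ref{theorem-stablelocus} guarantees that $\mathbf{H}^n_{\mathrm{GIT}}$ coincides with the stable locus for $\sheaf{M}_\ell$, so by standard GIT every point has finite stabilizer in $G[n]$; since we work in characteristic zero, these stabilizers are automatically reduced. Hence the quotient stack $[\mathbf{H}^n_{\mathrm{GIT}}/G[n]]$ is Deligne-Mumford, and it is of finite type over $C$ because $\mathbf{H}^n$ is of finite type over $C[n]$, which in turn is of finite type over $C$. The identification of $I^n_{X/C}$ as coarse moduli space is the universal property of the geometric quotient on the stable locus, valid precisely because semi-stable coincides with stable here. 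Finally, properness of $\mathcal{I}^n_{X/C} \to C$ follows from projectivity of the coarse moduli $I^n_{X/C} \to C$ via the standard Keel-Mori-type principle that a Deligne-Mumford stack of finite type over a base is proper over that base exactly when its coarse moduli space is; separatedness is in turn ensured by the finiteness and closedness of the stabilizers on the stable locus. The main point to watch, though largely formal, is matching the stack-theoretic notion of stability with the one used in Proposition \ref{prop-RelHilbMum-crit} and Theorem \ref{theorem-stablelocus}, and confirming that the quotient produced by the relative GIT of \cite{GHH-2015} is genuinely projective (not merely proper) over the invariant-theoretic quotient of the base.
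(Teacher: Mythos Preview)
Your proposal is correct and follows essentially the same route as the paper's proof: relative GIT from \cite{GHH-2015} gives projectivity over $\Spec A[n]^{G[n]}$, the invariant computation $A[n]^{G[n]}=A$ identifies this with $C$, finite reduced stabilizers on the stable locus give the Deligne--Mumford property, and properness of the stack is deduced from projectivity of its coarse moduli space. The only cosmetic difference is that the paper cites \cite{vistoli-1989} for the DM and coarse moduli steps where you invoke the Keel--Mori principle, and the paper states the invariant computation directly for $A[n]$ rather than reducing first to $\AA^{n+1}$.
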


\begin{proof}
	Since $\sheaf{M}_{\ell}$ is ample (by our assumption $\ell \gg 2n^2 $),
	\cite[Prop.~2.6]{GHH-2015} asserts that $I^n_{X/C}$ is relatively projective
	over the quotient
	\begin{equation*}
		C[n]/G[n] = \Spec(A[n]^{G[n]}), 
	\end{equation*}
	where 
	\begin{equation*}
		A[n] = A \tensor_{k[t]} k[t_1, \ldots, t_{n+1}]. 
	\end{equation*} 
	It is straightforward to check that $A[n]^{G[n]} = A$.
	
	All stabilizers for the action of $G[n]$ on $\mathbf{H}^n_{\mathrm{GIT}}$ are
	finite and reduced, hence, by \cite[(7.17)]{vistoli-1989},
	$\mathcal{I}^n_{X/C}$ is a Deligne-Mumford stack. It is of finite type over
	$C$, as this holds for $\mathbf{H}^n_{\mathrm{GIT}}$.
	
	By \cite[Thm.~2.5]{GHH-2015}, the quotient
	\begin{equation*}
		\mathbf{H}^n_{\mathrm{GIT}} \to I^n_{X/C} 
	\end{equation*}
	is universally a geometric quotient. Therefore, \cite[(2.11)]{vistoli-1989}
	asserts that $I^n_{X/C}$ is a coarse moduli space for $\mathcal{I}^n_{X/C}$.
	In particular, this means that there is a proper morphism
	\begin{equation*}
		\mathcal{I}^n_{X/C} \to I^n_{X/C}. 
	\end{equation*}  
	Since $I^n_{X/C} \to C$ is projective, this implies that
	$\mathcal{I}^n_{X/C}$ is proper over $C$.
\end{proof}

We remark that it follows from Proposition \ref{prop:inversion} that these
quotients do not depend on the choice of bipartite orientation of $\Gamma(X_0)$.
It is moreover clear from the construction that both quotients $I^n_{X/C}$ and
$\mathcal{I}^n_{X/C}$ are isomorphic, over $C^* = C \setminus \{0\}$, to the
family $\Hilb^n(X^*/C^*) \to C^*$. 

\begin{remark}
	If a group $H$ acts equivariantly on $X \to C$, and respecting the
	orientation on $\Gamma(X_0)$, one can show that there is an induced action on
	$I^n_{X/C} \to C$. This holds in particular in the situation described in
	Remark \ref{rem:equivariance}, meaning that the Galois group $\mathbb{Z}/2$
	of the base extension $C'/C$ acts naturally on $I^n_{X'/C'} \to C'$.
\end{remark}

\subsection{Comparison with Li--Wu}\label{sec:comparison}

We would now like to explain the relation between our construction and the
results of Li and Wu. An important ingredient in their work is the so-called
{\em stack of expanded degenerations} $\mathfrak{X}/\mathfrak{C}$. We will only
explain the properties of this stack that are needed for our results in this
section, for further details, we refer to \cite[Ch.~2]{li-2013}.

\subsubsection{Standard embeddings}

First we recall some useful notation and facts, following \cite[Ch.~2]{li-2013}.
For any subset $I \subset [n+1]$, we let $I^{\circ}$ denote its complement in
$[n+1]$. If $\vert I \vert = m + 1$, 
\begin{equation*}
	\iota_I \colon [m+1] \to I \subset [n+1] 
\end{equation*} 
denotes the unique order-preserving map.

We set 
\begin{equation*}
	\AA^{n+1}_{U(I)}
	= \{(t) \in \AA^{n+1} \mid t_i \neq 0, i \in I^{\circ} \}. 
\end{equation*} 
Then there is a canonical isomorphism
\begin{equation*}
	\tilde{\tau}_I \colon \AA^{m+1} \times G[n-m] \to \AA^{n+1}_{U(I)}, 
\end{equation*}
defined by $(t_1', \ldots, t_{m+1}'; \sigma_1, \ldots, \sigma_{n-m}) \mapsto
(t_1, \ldots, t_{n+1})$, where $t_k = t_l'$ if $k = \iota_I(l)$ and $t_k =
\sigma_l$ if $k = \iota_{I^{\circ}}(l)$. Restricting $\tilde{\tau}_I$ to the
identity element of $G[n-m]$ gives what Li calls the \emph{standard embedding}
\begin{equation*}
	\tau_I \colon \AA^{m+1} \to \AA^{n+1}. 
\end{equation*}

For each $n$, let $p_n \colon X[n] \to X$ be the canonical $G[n]$-equivariant
morphism. If $\vert I \vert = m+1$, then $\tau_I$ induces an isomorphism 
\begin{equation*}
	(\tau_I^* X[n], \tau_I^* p_n) \iso (X[m],p_m). 
\end{equation*}
over $C[m]$ \cite[2.14 + 2.15]{li-2013}. (We already encountered a special case
of this in the proof of Proposition \ref{prop:scheme-criterion}.)

\subsubsection{The stack of expanded degenerations}\label{subsubsec-sectiondescription}

Returning to the stack of expanded degenerations, one can give the following
useful description of the objects of this stack. 

Let $T$ be a $C$-scheme. An object $(W,p)$ of $\mathfrak{X}(T)$, also called an
\emph{expanded degeneration} of $X/C$, is a family sitting in a commutative
diagram \cite[Def.~2.21, Prop.~2.22]{li-2013}
\begin{equation*}
	\begin{tikzcd}
		W \ar[r, "p"] \ar[d] & X \ar[d] \\
		T \ar[r] & C
	\end{tikzcd}
\end{equation*}
where $W/T$ is allowed to have \emph{expansions} of $X_0$ \cite[2.2]{li-2013} as
fibres, in addition to the original fibres of $X$. 

More precisely, an \emph{effective family} in $\mathfrak{X}(T)$ is simply the
pullback $\xi^* X[m]$ through a $C$-morphism $\xi \colon T \to C[m]$, for some
$m$, with projection induced by $X[m] \to X$. Two effective families are
\emph{effectively equivalent} if there are standard embeddings $\tau_i \colon
C[m_i] \to C[m]$, $i \in \{1,2\}$, and a $T$-valued point $\sigma \colon T \to
G[m]$, such that 
\begin{equation*}
	\tau_1 \circ \xi_1 = (\tau_2 \circ \xi_2)^{\sigma}. 
\end{equation*}

In general, an expanded degeneration in $\mathfrak{X}(T)$ is a family $W \to T$
where $T$ allows an \'etale cover $\union T_i \to T$ such that $W \times_T T_i$
is effective, and such that the canonical isomorphism over $ T_i \times_T T_j$
is induced by an effective equivalence. Finally, an arrow of two expanded
degenerations $(W,p)$ and $(W',p')$ over $T$ is a $T$-isomorphism $W \to W'$
which is locally an effective equivalence.

\begin{remark}\label{remark-pointequiv}
	Two objects $\xi_1$ and $\xi_2 $ in $\mathfrak{X}(k)$ are equivalent if they
	can be embedded as fibres in the same expanded degeneration $X[n]$, for
	sufficiently large $n$, such that the fibre $\xi_1$ can be `translated' to
	the fibre $\xi_2 $ under the $G[n]$-action. In particular, under this
	equivalence, any object $ \xi $ of $ \mathfrak{X}(k)$ can be represented by a
	fibre $ X[m]_0$, where $ 0 \in C[m]$ denotes the origin, for a suitable $m$.
\end{remark}

\subsubsection{The Li--Wu stack}

Li and Wu have defined a stack $\mathcal{I}^P_{\mathfrak{X}/\mathfrak{C}}$
parame\-tri\-zing \emph{stable} ideal sheaves with fixed Hilbert polynomial $P$,
which we will explain next. To do this, let $J_Z$ be an ideal sheaf on $X[m]_0$,
for some $m \geq 0$. Li and Wu call $J_Z$ \emph{admissible}
\cite[Def.~3.52]{li-2013} if, for every component $D$ of the double locus, the
natural homomorphism
\begin{equation*}
	J_Z \tensor \OO_D \to \OO_D 
\end{equation*}
is injective. Then $J_Z$ is \emph{stable} if it is admissible and if
$\mathrm{Aut}_{\mathfrak{X}}(J_Z)$, the subgroup of elements $\sigma \in G[m]$
such that $\sigma^* J_Z = J_Z$, is finite. In this paper, we shall often call
such ideal sheaves \emph{Li--Wu stable}, in order to separate this notion of
stability from GIT stability. 

Now, for a $C$-scheme $T$, $\mathcal{I}^P_{\mathfrak{X}/\mathfrak{C}}(T)$
consists of all triples $(J_Z, W, p)$, where $(W, p) \in \mathfrak{X}(T)$, and
$J_Z$ is a $T$-flat family of stable ideal sheaves on $W$ with Hilbert
polynomial $P$. Moreover, every morphism $T' \to T $ induces a pullback map
$\mathcal{I}^P_{\mathfrak{X}/\mathfrak{C}}(T) \to
\mathcal{I}^P_{\mathfrak{X}/\mathfrak{C}}(T')$.

We shall refer to $\mathcal{I}^P_{\mathfrak{X}/\mathfrak{C}}$ as the
\emph{Li--Wu stack}. The following fundamental result has been proved by Li and
Wu (cf.~\cite[Thm.~4.14]{LW-2011} and \cite[Thm.~3.54]{li-2013}).

\begin{theorem}\label{theo:properness}
	$\mathcal{I}^P_{\mathfrak{X}/\mathfrak{C}}$ is a Deligne-Mumford stack,
	separated, proper and of finite type over $C$. 
\end{theorem}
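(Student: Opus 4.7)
The plan is to verify the four properties (DM, finite type, separated, proper) separately, following the strategy of Li and Wu; unlike the GIT approach developed in this paper (which only applies to constant Hilbert polynomial), this argument must work for arbitrary $P$. Algebraicity of $\mathcal{I}^P_{\mathfrak{X}/\mathfrak{C}}$ comes from the fact that the stack of expanded degenerations $\mathfrak{X}/\mathfrak{C}$ is itself a smooth Artin stack, admitting the $X[n]\to\mathfrak{X}$ and $C[n]\to\mathfrak{C}$ (modulo standard embeddings and $G[n]$-translations) as smooth atlases. Relativising the Hilbert functor over $\mathfrak{X}$ produces an algebraic stack, and imposing the open condition of admissibility together with the condition of finite automorphism group cuts out $\mathcal{I}^P_{\mathfrak{X}/\mathfrak{C}}$. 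The Deligne--Mumford property then reduces to the vanishing of infinitesimal automorphisms: an element of $\mathrm{Lie}(G[n])$ preserving $J_Z$ lies in the Lie algebra of $\mathrm{Aut}_\mathfrak{X}(J_Z)$, which is finite by the stability hypothesis and hence has trivial Lie algebra.

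For finite type I would argue a boundedness statement. If $J_Z$ is admissible on an effective family $W=X[n]_q$, then every ruled component $\Delta^{D,i}$ inserted into $W_0$ must contribute non-trivially to $P$: otherwise $J_Z$ cuts out an empty piece on $\Delta^{D,i}$, and the injectivity of $J_Z\otimes\OO_D\to\OO_D$ fails at an adjacent double locus. Since the contributions are bounded below by a positive constant depending only on the fixed data, the total number of inserted ruled components is bounded by some $N=N(P)$, so $\mathcal{I}^P_{\mathfrak{X}/\mathfrak{C}}$ is covered by the images of the relative Hilbert schemes $\Hilb^P(X[n]/C[n])$ for $n\le N$, each of which is of finite type over $C$.

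Separatedness follows from the valuative criterion. Let $R$ be a DVR with fraction field $K$, and suppose $(J_{Z,i},W_i,p_i)\in\mathcal{I}^P_{\mathfrak{X}/\mathfrak{C}}(R)$ agree over $K$. Using the boundedness $n\le N(P)$ and the definition of effective equivalence, one can realise both $W_i$ as pullbacks of a common $X[n]\to C[n]$ via morphisms $\xi_1,\xi_2\colon\Spec R\to C[n]$, and the generic isomorphism yields a $K$-point $\sigma\in G[n](K)$ with $\xi_{2,K}=\xi_{1,K}^\sigma$ and $\sigma^*J_{Z,1,K}=J_{Z,2,K}$. The stability condition, i.e.\ finiteness of $\mathrm{Aut}_\mathfrak{X}(J_{Z,0,i})$, prevents $\sigma$ from escaping to infinity at the closed point of $\Spec R$ and forces $\sigma$ to extend uniquely to an $R$-point of $G[n]$, providing the required isomorphism.

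Properness by the valuative criterion is where the main obstacle lies. Given $\xi_K\in\mathcal{I}^P_{\mathfrak{X}/\mathfrak{C}}(K)$ represented by $J_{Z,K}$ on an effective $W_K=X[n]\times_{C[n]}\Spec K$, the first attempt is to form the scheme-theoretic closure of $Z_K$ in $X[n]\times_{C[n]}\Spec R$. If the resulting central-fibre ideal $J_{Z,0}$ is admissible and $\mathrm{Aut}$-finite, we are done. Otherwise admissibility fails at some double component $D$, which signals that part of the limit subscheme has collapsed into a singular point and should be separated by inserting a new ruled component. Concretely one replaces $X[n]$ by an expansion $X[n+1]$ via a standard embedding chosen so that the new coordinate direction resolves the offending $D$, and re-takes the closure. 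The hard part---and the technical core of Li--Wu's proof---is to show that this ``re-expansion'' procedure terminates: one identifies a non-negative integer invariant (e.g.\ the total length of pieces of $Z_0$ supported on non-admissible components, or the codimension-one failure of injectivity on double loci) that strictly decreases at each step, combined with the uniform upper bound $N(P)$ on admissible expansion length coming from the finite-type argument above. Once the procedure terminates, the resulting $J_{Z,R}$ is Li--Wu stable by construction, and defines the required extension in $\mathcal{I}^P_{\mathfrak{X}/\mathfrak{C}}(R)$, unique up to the isomorphism guaranteed by separatedness.
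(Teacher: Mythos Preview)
The paper does not give its own proof of this theorem; it is stated as a result of Li and Wu, with references to \cite[Thm.~4.14]{LW-2011} and \cite[Thm.~3.54]{li-2013}, and no further argument. (The paper's GIT machinery later yields an independent route to properness, but only for constant $P$, via the isomorphism of Theorem~\ref{prop:comparison}.) So there is nothing in the paper to compare your sketch against directly; what you have written is a reasonable high-level outline of the Li--Wu strategy itself: smooth atlases from the $X[n]$, open stability conditions, boundedness to get finite type, and the valuative criteria with the re-expansion procedure for properness.

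One point in your boundedness step is misattributed and, for general $P$, understated. If $Z$ is disjoint from an inserted ruled component $\Delta^{D,i}$, then $J_Z$ restricted to that component is the structure sheaf, so $J_Z\otimes\OO_D\to\OO_D$ is the identity on the adjacent double loci and admissibility does \emph{not} fail. What fails is the finiteness of $\mathrm{Aut}_{\mathfrak{X}}(J_Z)$: the $\Gm$ acting on the rulings of $\Delta^{D,i}$ fixes $J_Z$. (Compare Lemma~\ref{LiWu-stab-pt} in the constant case.) For non-constant $P$ the correct obstruction is therefore not ``$Z$ meets every $\Delta^{D,i}$'' but ``$J_Z$ restricted to each $\Delta^{D,i}$ is not $\Gm$-invariant'', and this does not by itself give a positive lower bound on the contribution to $P$: for instance a curve contained in a union of rulings contributes to $P$ yet is $\Gm$-invariant. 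Extracting a uniform $N(P)$ requires a genuine numerical argument, and your sketch should flag this as the nontrivial input rather than a consequence of admissibility.
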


We remark that \cite[Thm.~3.54]{li-2013} is formulated under the assumption that
$X_0 = Y \union Y'$ with $Y$, $Y'$ and $Y \intsct Y'$ smooth and irreducible,
whereas \cite[Thm.~4.14]{LW-2011} is formulated for a general simple
degeneration. 

\subsubsection{Li--Wu stability}

For the remainder of this section, we shall only consider the case where $P$ is
constant, in which case Li--Wu stability can be formulated in a simple way. In
the statement, we shall use the following notation. For any $m \in \mathbb{N}$,
and with $I = [m+1]$, we denote by $\Delta^i $ the (disjoint) union of the
components $\Delta^{D,i}_I $ of $X[m]_0$, where $D$ runs over the edges in the
oriented graph $\Gamma(X_0)$.  

\begin{lemma}\label{LiWu-stab-pt}
	Let $Z \subset X[m]_0$ be a subscheme of finite length. Then $Z$ is Li--Wu
	stable if and only if the following properties hold:
	\begin{enumerate}
		\item
			$Z$ is supported on the smooth locus of $X[m]_0$. 
		\item
			$Z$ has non-empty intersection with $\Delta^i$, for all $i \in [m]$.
	\end{enumerate}
\end{lemma}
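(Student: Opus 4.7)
The plan is to separate Li--Wu stability into its two defining conditions---admissibility of $J_Z$ and finiteness of $\mathrm{Aut}_{\mathfrak{X}}(J_Z)$---and show that these match respectively conditions (1) and (2) of the lemma. For admissibility, I would tensor the short exact sequence $0 \to J_Z \to \OO_{X[m]_0} \to \OO_Z \to 0$ with $\OO_D$ for each component $D$ of the double locus, so that the kernel of $J_Z \tensor \OO_D \to \OO_D$ gets identified with $\mathrm{Tor}_1^{\OO_{X[m]_0}}(\OO_Z, \OO_D)$, which is supported on $\mathrm{Supp}(Z) \intsct D$. Hence (1) implies admissibility. Conversely, at a point $P \in \mathrm{Supp}(Z) \intsct D$, I would pass to the \'etale local model $R = k[x,y,z_1,\dots,z_d]/(xy)$ with $D = V(x,y)$ and check by a direct computation that for any $\mathfrak{m}$-primary ideal $I_{Z,P} \subset R$ the map $I_{Z,P}/(x,y) I_{Z,P} \to R/(x,y)$ has nonzero kernel (contributed by classes of elements of $I_{Z,P} \intsct (x,y)$, which map to $0$ in $R/(x,y)$).

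The finiteness part rests on the following structural description of the $G[m]$-action on $X[m]_0$, read off from Section \ref{sec:localcoordinates} and Proposition \ref{prop:components}: in the $\tau$-coordinates of Remark \ref{rem:tau-coord}, the factor $\tau_i$ acts trivially on every irreducible component of $X[m]_0$ except $\Delta^i$, on which it scales the $\PP^1$-fibers of the natural ruling $\Delta^i \to D$ with fixed locus the two sections $\Delta^i \intsct \Delta^{i-1}$ and $\Delta^i \intsct \Delta^{i+1}$. In particular, the stabilizer in $G[m] \iso \Gm^m$ of any point of $\Delta^i \setminus (\Delta^{i-1} \union \Delta^{i+1})$ is precisely the codimension-one subtorus $\{\tau_i = 1\}$.

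Granting this description, the equivalence of finiteness of $\mathrm{Aut}_{\mathfrak{X}}(J_Z)$ with (2) under (1) follows easily. If $Z \intsct \Delta^i = \emptyset$ for some $i$, then varying $\tau_i$ alone acts trivially on a neighbourhood of $\mathrm{Supp}(Z)$, hence fixes $J_Z$, and $\mathrm{Aut}_{\mathfrak{X}}(J_Z)$ is infinite. Conversely, the identity component $H$ of $\mathrm{Aut}_{\mathfrak{X}}(J_Z) \subset G[m]$ must fix $\mathrm{Supp}(Z)$ pointwise (since $Z$ is zero-dimensional and a connected group cannot act nontrivially on a finite set); under (1) and (2), for each $i \in [m]$ some $P_i \in Z \intsct \Delta^i$ lies in $\Delta^i \setminus (\Delta^{i-1} \union \Delta^{i+1})$, forcing $H \subset \{\tau_i = 1\}$, and intersecting over all $i$ yields $H = \{1\}$, so $\mathrm{Aut}_{\mathfrak{X}}(J_Z)$ is finite.

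The main obstacle I expect is the structural step: besides the free action on the interior of $\Delta^i$, one must verify that $\tau_i$ acts \emph{trivially} on every other component of $X[m]_0$. This is a careful chart-by-chart check using that $\tau_i$ affects only the coordinates $u_i/v_i$ (in $W_i$) and $v_i/u_i$ (in $W_{i+1}$), and that on any component different from $\Delta^i$ these coordinates are either identically $0$ or $\infty$ on the relevant chart, or simply absent. The admissibility equivalence is by contrast a short local Tor computation at a nodal point.
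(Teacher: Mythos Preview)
Your proposal is correct and follows essentially the same line as the paper's own argument, only fleshed out in considerably more detail. The paper's proof is extremely terse: it merely asserts that admissibility of $J_Z$ is equivalent to smooth support ``by a straightforward computation'', and that the $i$-th factor of $G[m]$ acts on $\Delta^i$ by multiplication in the fibres of the ruling, from which finiteness of the automorphism group is equivalent to condition (2). Your Tor interpretation of the admissibility condition and your chart-by-chart verification that $\tau_i$ acts trivially off $\Delta^i$ are precisely the computations the paper leaves implicit; the identity-component argument for the finiteness direction is a clean way to package what the paper takes for granted.
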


\begin{proof}
	A straightforward computation shows that $J_Z$ is admissible if and only if
	$Z$ is supported on the smooth locus of $X[m]_0$. For (2), note that the
	$i$-th factor of $G[m]$ acts on $\Delta^i$ by multiplication in the fibres of
	the ruling. This means that the automorphism group is finite if and only if
	$Z$ intersects every $\Delta^i$ nontrivially. 
\end{proof}

Note the similarity with the description of GIT stable subschemes given in
Theorem \ref{theorem-stablelocus}. We shall next compare the locus
$\mathbf{H}^n_{\mathrm{LW}}$ of Li--Wu stable points in $\Hilb^n(X[n]/C[n])$
with the GIT stable locus $\mathbf{H}^n_{\mathrm{GIT}}$. By
\cite[Lem.~4.3]{LW-2011}, $\mathbf{H}^n_{\mathrm{LW}}$ is an open subset, and it
is clearly invariant. The same properties hold for
$\mathbf{H}^n_{\mathrm{GIT}}$.

\begin{lemma}\label{lemma-stabilitycomparison}
	There is a $G[n]$-equivariant open immersion
	\begin{equation*}
		\mathbf{H}^n_{\mathrm{GIT}} \subset \mathbf{H}^n_{\mathrm{LW}} 
	\end{equation*}
	as subschemes in $\Hilb^n(X[n]/C[n])$.
\end{lemma}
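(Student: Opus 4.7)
The plan is to deduce the inclusion from the pointwise characterizations already in hand, namely Theorem \ref{theorem-stablelocus} and Lemma \ref{LiWu-stab-pt}, and then observe that the inclusion is automatically an open immersion. By GIT (Section \ref{subsec-relgit}) the subset $\mathbf{H}^n_{\mathrm{GIT}} = \mathbf{H}^n(\sheaf{M}_\ell)^s$ is open and $G[n]$-invariant in $\Hilb^n(X[n]/C[n])$, and by \cite[Lem.~4.3]{LW-2011} the same holds for $\mathbf{H}^n_{\mathrm{LW}}$. Therefore, once we verify the set-theoretic containment, both the openness of the inclusion and its $G[n]$-equivariance follow for free.

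Take $[Z]\in \mathbf{H}^n_{\mathrm{GIT}}$, represented by $Z \subset X[n]_q$, and set $I_{[Z]} = I_{\mathbf{a}}$ where $\mathbf{a} = (1, a_1, \ldots, a_r, n+1)\in\mathcal{B}$, with $r = |I_{[Z]}|$. By Theorem \ref{theorem-stablelocus}(1), $Z$ has smooth support and numerical support $\mathbf{v}(Z) = \mathbf{v}_{\mathbf{a}}$. To check Li--Wu stability I would, following Section \ref{subsubsec-sectiondescription}, use the standard embedding $\tau_{I_{[Z]}}$ together with a $G[n-r+1]$-translation to identify $X[n]_q$ with the central fibre $X[r-1]_0$; this is a $G$-equivariant identification compatible with the decomposition of either fibre into components $\Delta^{a_0}, \Delta^{a_1},\dots,\Delta^{a_r}$ as described in Section \ref{subsubsec:combsupp}. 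Under this identification the two conditions of Lemma \ref{LiWu-stab-pt} can be verified directly for $Z$.

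Condition (1) of Lemma \ref{LiWu-stab-pt}, namely that $J_Z\tensor \OO_D\to \OO_D$ is injective for each component $D$ of the double locus, is equivalent to $Z$ being supported in the smooth locus of $X[n]_q$, which is exactly Theorem \ref{theorem-stablelocus}(1). For condition (2), the key observation is that the $i$-th entry of $\mathbf{v}_{\mathbf{a}}$ equals $a_{i+1}-a_i$, and by the very definition of $\mathcal{B}$ one has $a_{i+1} - a_i \geq 1$ strictly for $1\leq i\leq r-1$. Since the $i$-th entry of $\mathbf{v}(Z)$ counts (with multiplicity) the number of points of $Z$ lying on $\Delta^{a_i}$, this forces $Z$ to meet every inserted ruled component nontrivially, which is precisely Lemma \ref{LiWu-stab-pt}(2). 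Thus $[Z]\in \mathbf{H}^n_{\mathrm{LW}}$, and the inclusion is established. The only nontrivial point in the argument is the notational reconciliation between the numerical support indexing used in Section \ref{sec:GIT-analysis} and the labelling $\Delta^i$ used by Li--Wu in the central fibre, i.e.\ checking that under the standard embedding the white nodes of the expanded graph $\Gamma_{I_{[Z]}}$ correspond to the components indexed by $i\in[r-1]$; once this bookkeeping is done the containment is an immediate consequence of $a_1<a_2<\cdots<a_r$.
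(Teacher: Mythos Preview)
Your proof is correct and follows the same approach as the paper's: both reduce to checking the pointwise implication using Theorem \ref{theorem-stablelocus} and Lemma \ref{LiWu-stab-pt}, after noting that both loci are open and $G[n]$-invariant. You have simply spelled out in detail what the paper compresses into the phrase ``this is clear from Lemma \ref{LiWu-stab-pt} and Theorem \ref{theorem-stablelocus}'', including the bookkeeping with the standard embedding and the observation that $a_{i+1}-a_i\ge 1$ for $1\le i\le r-1$ forces $Z$ to meet every inserted component.
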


\begin{proof}
	As $\mathbf{H}^n_{\mathrm{GIT}}$ and $\mathbf{H}^n_{\mathrm{LW}}$ are both
	open and invariant, we only need to show that any GIT stable subscheme in a
	closed fibre of $X[n] \to C[n]$ is Li--Wu stable. This is clear from Lemma
	\ref{LiWu-stab-pt} and Theorem \ref{theorem-stablelocus}.
\end{proof}

This inclusion is strict in general; by Theorem \ref{theorem-stablelocus} (1), a
Li--Wu stable subscheme $Z$ will fail to be GIT stable if the numerical support
$\mathbf{v}(Z)$ does not equal $\mathbf{v}_{\mathbf{a}}$, where $I_{[Z]} =
I_{\mathbf{a}}$.

\subsubsection{The canonical comparison morphism}

There is an obvious morphism from our quotient $\mathcal{I}^n_{X/C}$ to the
Li--Wu stack $\mathcal{I}^n_{\mathfrak{X}/\mathfrak{C}}$. Indeed, the
restriction to $\mathbf{H}^n_{\mathrm{LW}}$ of the universal family of the
Hilbert scheme corresponds to a $G[n]$-equivariant, surjective and smooth
morphism
\begin{equation*}
	\psi \colon \mathbf{H}^n_{\mathrm{LW}}
	\to \mathcal{I}^n_{\mathfrak{X}/\mathfrak{C}}. 
\end{equation*}
Restriction to the open subscheme $H^n_{\mathrm{GIT}}$ gives 
\begin{equation*}
	\phi \colon \mathbf{H}^n_{\mathrm{GIT}}
	\to \mathcal{I}^n_{\mathfrak{X}/\mathfrak{C}}, 
\end{equation*}
which is again equivariant and smooth. Hence $\phi$ factors through the
quotient $\mathcal{I}^n_{X/C}$, giving a smooth morphism
\begin{equation}\label{morphism-main}
	f \colon \mathcal{I}^n_{X/C} \to \mathcal{I}^n_{\mathfrak{X}/\mathfrak{C}}. 
\end{equation}

\subsubsection{A criterion for isomorphism}

If $\mathfrak{Z}$ is an algebraic stack over $k$, we denote by $\vert
\mathfrak{Z}(k) \vert$ the set of equivalence classes of objects in
$\mathfrak{Z}(k)$.

\begin{lemma}\label{lemma-pointsstab}
	The following properties hold for $f$:
	\begin{enumerate}
		\item
			$\vert f \vert \colon \vert \mathcal{I}^n_{X/C}(k) \vert \to \vert
			\mathcal{I}^n_{\mathfrak{X}/\mathfrak{C}}(k) \vert$ is a bijection.
		\item
			For every object $\xi$ in $\mathcal{I}^n_{X/C}(k)$, $f$ induces an
			isomorphism
			\begin{equation*}
				\mathrm{Aut}(\xi) \to \mathrm{Aut}(f(\xi)) 
			\end{equation*}
			of automorphism groups.
	\end{enumerate}
\end{lemma}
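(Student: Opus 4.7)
The plan is to work through the explicit combinatorial dictionary tying Li--Wu representatives on fibres $X[m]_0$ to GIT representatives on $X[n]_q$. By Remark \ref{remark-pointequiv}, every class in $\mathcal{I}^n_{\mathfrak{X}/\mathfrak{C}}(k)$ has a representative of the first form, and the standard embeddings $\tau_I\colon C[m]\to C[n]$ provide the bridge to the second. The explicit description of GIT stable cycles in Theorem \ref{theorem-stablelocus} matches the Li--Wu criterion of Lemma \ref{LiWu-stab-pt} closely enough that the comparison on $k$-points becomes an exercise in tracing this dictionary.

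For surjectivity of $|f|$ in (1), I would take any Li--Wu stable $J_Z$ on $X[m]_0$. The numerical support $\mathbf{v}(Z) = (v_0,v_1,\ldots,v_{m+1})$ then satisfies $v_i \geq 1$ for $1\leq i\leq m$ and $\sum_i v_i = n$. Setting $a_0 = 1$, $a_i = a_{i-1}+v_{i-1}$ for $1 \leq i \leq m+1$, and $a_{m+2}=n+1$ produces an admissible tuple $\mathbf{a}$ with $\mathbf{v}_{\mathbf{a}} = \mathbf{v}(Z)$. Taking $I=\{a_1,\ldots,a_{m+1}\}\subset [n+1]$ and $q=\tau_I(0)\in C[n]$, the standard embedding identifies $X[m]_0 \iso X[n]_q$; viewed inside $X[n]_q$ the subscheme $Z$ has $I_{[Z]}=I$ and $\mathbf{v}(Z)=\mathbf{v}_{\mathbf{a}}$, hence is GIT stable by Theorem \ref{theorem-stablelocus}, and $f([Z])$ is precisely the given class.

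For injectivity of $|f|$, suppose $[Z_1],[Z_2] \in \mathbf{H}^n_{\mathrm{GIT}}(k)$ become equivalent in $\mathcal{I}^n_{\mathfrak{X}/\mathfrak{C}}(k)$. The effective equivalence furnishes standard embeddings $\tau_1,\tau_2\colon C[n]\to C[m]$ and $\sigma\in G[m](k)$ with $\tau_1(q_1) = \sigma\cdot \tau_2(q_2)$, together with an induced isomorphism $\phi\colon X[n]_{q_1}\iso X[n]_{q_2}$ carrying $Z_1$ to $Z_2$ and compatible with the canonical projection to $X$. Because $\phi$ must respect the chain structure of each expanded edge component-wise, one obtains $\mathbf{v}(Z_1) = \mathbf{v}(Z_2)$. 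GIT stability, via the bijection $\mathbf{a}\leftrightarrow \mathbf{v}_{\mathbf{a}}$, then forces $I_{q_1} = I_{q_2}$; since $q_1,q_2$ have the same image in $C$, they lie in a single $G[n]$-orbit. After a $G[n]$-translate I may assume $q_1=q_2=q$, and the residual automorphism $\phi$ of $X[n]_q$ --- viewed as $X[r-1]_0$ via the standard embedding, with $r=|I_q|$ --- is induced by an element of $G[r-1]$. Under the canonical isomorphism $\mathrm{Stab}_{G[n]}(q) \iso G[r-1]$ this element corresponds to an element of $G[n]$ stabilizing $q$, so $Z_2$ lies in the $G[n]$-orbit of $Z_1$.

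For part (2), the automorphism group of $\xi = [Z]$ in $\mathcal{I}^n_{X/C}(k)$ is $\mathrm{Stab}_{G[n]}(Z)$, while that of $f(\xi)$ in $\mathcal{I}^n_{\mathfrak{X}/\mathfrak{C}}(k)$ is $\mathrm{Aut}_{\mathfrak{X}}(J_Z) = \{\tilde\sigma\in G[r-1] : \tilde\sigma^* J_Z = J_Z\}$, taking the representative on $X[r-1]_0$. The isomorphism $\mathrm{Stab}_{G[n]}(q) \iso G[r-1]$ is equivariant with respect to the actions on $X[n]_q \iso X[r-1]_0$, so it restricts to the desired isomorphism of automorphism groups, and this restriction is exactly the map induced by $f$. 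The main obstacle is the injectivity step: an effective equivalence may involve an auxiliary $m > n$ and two distinct standard embeddings, and one must verify that the rigidity of GIT stability --- the recovery of $\mathbf{a}$, hence $I_q$, directly from $\mathbf{v}(Z)$ --- genuinely reduces this data to a single $G[n]$-action on $\mathbf{H}^n$.
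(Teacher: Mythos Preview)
Your proof is correct and follows essentially the same approach as the paper's: both arguments hinge on the observation that the numerical support $\mathbf{v}(Z)$ of a Li--Wu stable $Z\subset X[m]_0$ is an intrinsic invariant, and that Theorem \ref{theorem-stablelocus} together with the bijection $\mathcal{B}\to\mathcal{V}$ pins down the \emph{unique} subset $I\subset[n+1]$ for which the standard embedding $\tau_I$ places $Z$ in the GIT stable locus. The paper treats surjectivity and injectivity in one stroke via this uniqueness, whereas you separate them and spell out the injectivity step through effective equivalences and the identification $\mathrm{Stab}_{G[n]}(q)\iso G[r-1]$; this extra detail is sound but not needed, since once $I$ is forced, the remaining ambiguity is exactly a $G[n]$-orbit.
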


\begin{proof}
	By Remark \ref{remark-pointequiv}, any point $\xi' $ in $\vert
	\mathcal{I}^n_{\mathfrak{X}/\mathfrak{C}}(k) \vert$ can be represented by a
	Li--Wu stable subscheme $Z \subset X[m]_0$ of length $n$, for some $m \leq
	n$. For any subset $I \subset [n+1]$ with $\vert I \vert = m+1$, we can,
	using the standard embedding $\tau_I$, view$ X[m]_0$ as the fibre $(\tau_I^*
	X[n])_0$ of $ X[n]$, where $0 \in C[m]$. In the notation of
	\ref{subsubsec:combsupp}, we then have
	\begin{equation*}
		I = I_{[Z]} = \{ i \mid t_i(Z) = 0 \}. 
	\end{equation*}
	
	On the other hand, as an element of $\mathcal{V} \subset \mathbb{Z}^{m+2}$,
	the numerical support $\mathbf{v}(Z)$ of $Z$ is independent of the choice of
	$I$. Hence, by Theorem \ref{theorem-stablelocus}, there is a \emph{unique}
	$I$ for which $Z$ is also GIT-stable, namely the subset $I_{\mathbf{a}}$
	determined by the preimage $\mathbf{a}$ of $\mathbf{v}(Z)$ in the bijection
	$\mathcal{B} \to \mathcal{V}$. Thus, the $G[n]$-orbit of $Z$ in
	$\mathbf{H}^n_{\mathrm{GIT}}$ is the unique point $ \xi \in \vert
	\mathcal{I}^n_{X/C}(k) \vert $ such that $f(\xi) = \xi'$, which proves (1).
	Clearly, the automorphism groups of $\xi$ and its image $f(\xi)$ coincide as
	subgroups of $G[m]$ in the above construction, which shows (2). 
\end{proof}

In the proof of Theorem \ref{prop:comparison} below, we shall use the following 
standard technical result on stacks, whose proof we omit:

\begin{lemma}\label{lemma-stackstech}
	Let $\mathfrak{X}$ and $\mathfrak{Y}$ be Deligne-Mumford stacks of finite
	type over an algebraically closed field $k$, and let $$ f \colon \mathfrak{X}
	\to \mathfrak{Y} $$ be a representable \'etale morphism of finite type.
	Assume
	\begin{enumerate}
		\item
			$\vert f \vert \colon \vert \mathfrak{X}(k) \vert \to \vert
			\mathfrak{Y}(k) \vert$ is bijective.
		\item
			For every $x \in \mathfrak{X}(k)$, $f$ induces an isomorphism
			\begin{equation*}
			Aut_{\mathfrak{X}}(x) \to Aut_{\mathfrak{Y}}(f(x)).
			\end{equation*}
	\end{enumerate}
	
	Then $f$ is an isomorphism of stacks.
\end{lemma}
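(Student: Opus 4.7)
The plan is to prove that $f$ is an isomorphism by first showing it is a monomorphism in the 2-categorical sense, and then that it is surjective; since a representable étale monomorphism of DM stacks is an open immersion, and a surjective open immersion is an isomorphism, these two ingredients together suffice.

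To establish that $f$ is a monomorphism, I would verify that the diagonal $\Delta_f \colon \mathfrak{X} \to \mathfrak{X} \times_{\mathfrak{Y}} \mathfrak{X}$ is an isomorphism. Since $f$ is representable and étale, $\Delta_f$ is itself a representable étale monomorphism and hence an open immersion of algebraic stacks. It remains to show $\Delta_f$ is surjective, and since both source and target are DM stacks of finite type over the algebraically closed field $k$, this can be read off on $k$-points. A $k$-point of $\mathfrak{X} \times_{\mathfrak{Y}} \mathfrak{X}$ is represented by a triple $(x_1, x_2, \alpha)$ with $\alpha \colon f(x_1) \xrightarrow{\sim} f(x_2)$; hypothesis (1) yields an isomorphism $\beta \colon x_1 \to x_2$, and then hypothesis (2) lets one lift $\alpha \circ f(\beta)^{-1} \in \mathrm{Aut}(f(x_2))$ to some $\gamma \in \mathrm{Aut}(x_2)$. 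Setting $\phi := \gamma \circ \beta$ gives $f(\phi) = \alpha$, and the pair $(\mathrm{id}_{x_1}, \phi)$ exhibits $(x_1, x_2, \alpha)$ as isomorphic to $\Delta_f(x_1)$ in the fibre product, so every $k$-point lies in the image of $\Delta_f$.

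Having $f$ representable, étale and a monomorphism, it follows that $f$ is an open immersion of DM stacks; one reduces to the same statement for algebraic spaces by pulling back along any étale atlas $V \to \mathfrak{Y}$. Finally, the image of $f$ is an open substack of $\mathfrak{Y}$ which, by hypothesis (1), contains every $k$-point of $\mathfrak{Y}$. Since $\mathfrak{Y}$ is a DM stack of finite type over the algebraically closed field $k$, an open substack meeting every $k$-point must coincide with the whole stack, so $f$ is surjective, hence an isomorphism.

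The main technical point to verify carefully is that open substacks of a finite-type DM stack over an algebraically closed field are determined by their $k$-points, which I would deduce by descending along an étale atlas to the analogous and standard statement for finite-type schemes or algebraic spaces (where closed points are dense in any locally closed subset). Beyond this, the proof is essentially a formal 2-categorical unwinding of (1) and (2) as statements about the diagonal of $f$ and about the image of $f$ on $k$-points, and no geometric input beyond étaleness is needed.
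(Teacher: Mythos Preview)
The paper does not actually prove this lemma: it is stated as a ``standard technical result on stacks, whose proof we omit.'' Your argument is correct and is precisely the standard route one would take --- show $\Delta_f$ is a surjective open immersion (using unramifiedness of $f$ for openness and hypotheses (1) and (2) for surjectivity on $k$-points), conclude $f$ is a monomorphism, hence an \'etale monomorphism, hence an open immersion, and then use (1) again together with density of $k$-points in finite-type stacks over $k$ to conclude surjectivity.
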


\subsubsection{The stacks are isomorphic}

To conclude, we prove that (\ref{morphism-main}) above is an isomorphism.  

\begin{theorem}\label{prop:comparison}
	The morphism $f \colon \mathcal{I}^n_{X/C} \to
	\mathcal{I}^n_{\mathfrak{X}/\mathfrak{C}} $ is an isomorphism of
	Deligne-Mumford stacks.
\end{theorem}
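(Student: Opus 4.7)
The strategy is to apply Lemma \ref{lemma-stackstech}, whose hypotheses on points and automorphism groups are already handed to us by Lemma \ref{lemma-pointsstab}. What remains is to establish the three structural hypotheses on $f$: that it is representable, of finite type, and \'etale. Finite type is immediate from Theorem \ref{prop-GITquotient} and Theorem \ref{theo:properness}, both source and target being of finite type over $C$. Representability of $f$ follows formally from Lemma \ref{lemma-pointsstab}(2): a morphism of Deligne--Mumford stacks of finite type over $k$ is representable if and only if it is unramified on stabilizers, i.e.\ induces monomorphisms on automorphism groups of all geometric points, and we actually have isomorphisms.

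For \'etaleness, I would use the factorization $\phi = f \circ \pi$, where
\begin{equation*}
	\pi \colon \mathbf{H}^n_{\mathrm{GIT}} \to \mathcal{I}^n_{X/C}
	= [\mathbf{H}^n_{\mathrm{GIT}}/G[n]]
\end{equation*}
is the canonical quotient map, a $G[n]$-torsor and hence smooth and surjective, while $\phi$ is smooth since it is the restriction of the smooth morphism $\psi$ to the open subscheme $\mathbf{H}^n_{\mathrm{GIT}} \subset \mathbf{H}^n_{\mathrm{LW}}$. Smoothness descends along smooth surjections, so $f$ is smooth. To upgrade smoothness to \'etaleness, I would observe that $f$ is representable and that by Lemma \ref{lemma-pointsstab}(1) it is bijective on $k$-points; combined with the isomorphism on automorphism groups, the geometric fibres of $f$ (which are algebraic spaces by representability) are reduced to a single point, hence zero-dimensional. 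A smooth morphism with zero-dimensional fibres is \'etale.

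With representability, finite type, and \'etaleness established, Lemma \ref{lemma-stackstech} applies and yields the desired conclusion. The only step that requires genuine input beyond formal stack machinery is the smoothness of $\phi$ (equivalently $\psi$), which in turn rests on Li--Wu's analysis of $\mathcal{I}^n_{\mathfrak{X}/\mathfrak{C}}$ recalled in Section \ref{sec:comparison}; once that is in hand, the main obstacle is simply the standard verification that smoothness together with bijectivity on points and trivial relative stabilizers forces \'etaleness, for which the torsor factorization through $\mathbf{H}^n_{\mathrm{GIT}}$ provides the cleanest presentation.
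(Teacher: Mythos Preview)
Your proof is correct and follows essentially the same route as the paper: establish that $f$ is representable, of finite type, and \'etale, then invoke Lemma~\ref{lemma-stackstech} together with Lemma~\ref{lemma-pointsstab}. The only substantive difference is cosmetic---you deduce \'etaleness from ``smooth with zero-dimensional fibres'' whereas the paper phrases it as ``smooth and unramified''---and both arguments rest on the smoothness of $f$ already recorded before the theorem via the factorization $\phi = f\circ\pi$.

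One small point worth tightening: for representability you appeal to the criterion on automorphism groups of \emph{all geometric points}, but Lemma~\ref{lemma-pointsstab}(2) is stated only for $k$-points. The paper closes this gap explicitly by observing that the formation of $X[n]\to C[n]$ commutes with base change to any algebraically closed overfield of $k$, so the argument of Lemma~\ref{lemma-pointsstab} goes through verbatim over such extensions; you should add a sentence to this effect.
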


\begin{proof}
	First we observe that $f$ is representable. Indeed, this follows from
	\cite[Lem.~6]{AK13}, because $\mathcal{I}^n_{X/C}$ has finite inertia (being
	a separated DM-stack), and because $f$ yields an isomorphism of automorphism
	groups for all geometric points. The second property is due to the fact that
	the formation of the standard models $X[n] \to C[n]$ commutes with base
	change to any algebraically closed overfield of $k$, together with a similar
	argument as in Lemma \ref{lemma-pointsstab}.
	
	Moreover, $f$ is of finite type and \'etale. Since we have already
	established that $f$ is smooth, it suffices to prove that it is unramified.
	This can be checked on geometric points, and is a direct computation. 
	
	Since $f$ is representable, it suffices to prove, for any \'etale atlas $Y$
	of $ \mathcal{I}^n_{\mathfrak{X}/\mathfrak{C}} $, that the pullback $f_Y$ of
	$f$ is an isomorphism of schemes. We claim that $f_Y$ is in fact a surjective
	open immersion. Indeed, this follows from Lemma \ref{lemma-pointsstab}
	together with Lemma \ref{lemma-stackstech}.
\end{proof}


\section{Example}\label{sec:examples}

In this section we want to discuss one example in detail in order to demonstrate
how our machinery works. We start with a simple degeneration $X \to C$ where the
central fibre $X_0=Y_1 \union Y_2$  has two components intersecting along a
smooth irreducible subvariety $D=Y_1 \intsct Y_2$. We want to explain the
geometry of the degenerate Hilbert scheme for $n$ points. For most of this
discussion the dimension  of the fibres will be irrelevant, so we will allow it
to be arbitrary for the time being. In this case the dual graph
$\Gamma=\Gamma(X_0)$ is simply 
\begin{equation}\label{equ:twocompgraph}
	\bullet \xrightarrow{\gamma} \bullet
\end{equation}
which is trivially a bipartite graph.

Recall the expanded degenerations $X[n] \to C[n]$. If $t\colon C \to \AA^1$ is a
local \'etale coordinate, then we obtain a map $(t_1, \ldots, t_{n+1}) \colon
C[n] \to \AA^{n+1}$. Let $I=\{a_1, \ldots , a_{r}\} \subset [n+1]$ and denote by
$X[n]_I$ the locus of $X[n]$ which is the pre-image of the subscheme $C[n]_I$
where $t_{a_i}=0, a_i\in I$. In Proposition \ref{prop:components} we analysed the
components of $X[n]_I$ and  found that they correspond  to the vertices of a
graph $\Gamma_I$ which is derived from  $\Gamma$ by replacing each edge $\gamma$
by new edges labelled $\gamma_{a_1}, \ldots, \gamma_{a_{r}}$, arranged in
increasing order, and inserting white vertices at the ends of $\gamma_{a_1},
\ldots , \gamma_{a_{r-1}} $. Since in our case $\Gamma$ only has one edge
$\gamma$ we can omit this from our notation and simply relabel the edges
$\gamma_{a_{\ell}}$ by $a_\ell$. The graph $\Gamma_I$ thus becomes 
\begin{equation} \label{equ:expandedgraphcontr}
	\bullet \xrightarrow{a_1} \circ \xrightarrow{a_2} \circ
	\cdots \circ  \xrightarrow{a_{r}} \bullet.
\end{equation}  

The extremal case is given by $I=I_{\max}=[n+1]$, in which case we arrive at the
graph $\Gamma_{I_{\max}}$ given by
\begin{equation} \label{equ:expandedgraphtwo}
	\bullet \xrightarrow{1} \circ \xrightarrow{2} \circ
	\cdots \circ \xrightarrow{n+1} \bullet.
\end{equation}  
All other graphs $\Gamma_I$ with $I \subset I_{\operatorname {max}}$ arise from
$\Gamma_{I_{\operatorname {max}}}$ by deleting the arrows in $I_{\operatorname
{max}} \setminus I$. By Proposition \ref{prop:components} we have a
decomposition into irreducible components
\begin{equation*}
	X[n]_I = \Delta^{0}_{I} \union
	\ldots \union \Delta^{a_{\ell}}_{I}  \union \ldots  \union \Delta^{a_{r}}_{I}
\end{equation*}
where $\Delta^{0}_{I}$ and $\Delta_I^{a_{r}}$ belong to the black vertices of
the graph (\ref{equ:expandedgraphcontr})  while the components
$\Delta^{a_{\ell}}_{I}, \ell= 1, \ldots r-1$ correspond to the white vertices.
Note that since there is only one component $D$, we have dropped $D$ from the
notation and have thus set  $\Delta^{D,a_{\ell}}_I= \Delta^{a_{\ell}}_I$. Under
the natural projection $X[n] \to X \times_C C[n]$ the components
$\Delta^{0}_{I}$ and $\Delta^{a_r}_I$ are  mapped birationally onto $Y_1
\times_C C[n]_I$ and $Y_2 \times_C C[n]_I$ respectively. The components
$\Delta^{a_{\ell}}_{I}, \ell= 1, \ldots r-1$ are contracted to $D \times_C
C[n]_I$. The latter are the inserted components which have the structure of a
$\PP^1$-bundle, whose fibres are contracted under the map to $D \times_C
C[n]_I$. 
 
There is another way of labelling the components of $X[n]_I$ which is sometimes
helpful in geometric considerations. If $I=\{a_1, \ldots, a_{r}\}$, then we
decompose $I_{\operatorname {max},0}= \{0\} \union [n+1]$ into $I_{\operatorname
{max},0}= I_0 \union I_1 \union \ldots \union I_{r}$ where $I_0=[0,a_1-1]$,
$I_{\ell}=[a_{\ell}, a_{\ell+1}-1]$ for $1 \leq \ell \leq r-1$ and
$I_r=[a_r,n+1]$. The components $ \Delta^{a_{\ell}}_I $ then correspond to the
first entry in each interval $I_{\ell}$. We can understand the above graph
(\ref{equ:expandedgraphcontr}) as a contraction of the maximal graph
$\Gamma_{I_{\max}}$ given in (\ref{equ:expandedgraphtwo}) by identifying all the
edges labelled in one of the sets $I_{\ell}$ in the partition $I_{\operatorname
{max},0}= I_0 \union I_1 \union \ldots \union I_{r}$. So we can symbolically
think of the left hand bold vertex of (\ref{equ:expandedgraphcontr}) as
\begin{equation*}
	\bullet \stackrel{1}{=} \circ \cdots \circ \stackrel{a_1-1}{=} \circ
	\xrightarrow{a_1} \circ \cdots
\end{equation*}
the middle white vertices as 
\begin{equation*}
	\cdots \xrightarrow{a_{\ell}} \circ \stackrel{a_{\ell}+1}{=} \circ \cdots
	\circ \stackrel{a_{\ell+1}-1}{=} \circ \xrightarrow{a_{\ell}+1} \cdots
\end{equation*}
and finally the right hand bold vertex as 
\begin{equation*}
	\cdots \xrightarrow{a_{r}} \circ \stackrel{a_{r}+1}{=} \circ \cdots \circ
	\stackrel{n+1}{=} \bullet.
\end{equation*}

This picture also helps us understand the smoothing or, in other words, the
inclusion of the closure of the strata when we move from $t_{a_{\ell}}=0$ to
$t_{a_{\ell}} \neq 0$. This corresponds to removing $a_{\ell}$ from the set $I$
or, equivalently, to replacing $I_{\ell-1}$ and $I_{\ell}$ by their union
$I_{\ell-1} \union I_{\ell}$.

Now consider a subscheme $Z$ of length $n$ representing a point in the relative
Hilbert scheme $ \mathbf{H}^n = \Hilb^n(X[n]/C[n]) $. Since $ \mathbf{H}^n$ is
the relative Hilbert scheme, every subscheme $Z$ lies in some fibre $\mathbf{H}^n_q$
for a point $q\in C[n]$. Let $I$ be the the set of indices labelling the
coordinates $t_{a_i}$ which vanish at $q$. In Section \ref{sec:GIT-analysis} we
developed a numerical criterion for stability. First of all recall that
stability and semi-stability coincide. Moreover, all stable cycles have support
in the smooth part $X[n]_I^{\circ}$ of $X[n]_I$, by which we mean that $Z$ does
not intersect the locus where different components of $X[n]_I$ meet. We shall
denote the restriction of the smooth locus $X[n]_I^{\circ}$ to the components
$\Delta^{a_{\ell}}_I$ by $\Delta^{a_{\ell},\circ}_I$. We now claim that the
numerical criterion of Theorem \ref{theorem-stablelocus} is equivalent to
\begin{equation} \label{equ:example1}
	Z \subset X[n]_I  { \mbox { is stable} }
	\Leftrightarrow
	\operatorname{length} (Z \intsct \Delta^{a_{\ell},\circ}_I)
	= \vert I_{\ell} \intsct [n]  \vert \ \forall \, \ell.
\end{equation}
Indeed, in the notation of Section \ref{sec:GIT-analysis} we have  $\mathbf{a} =
(1,a_1, \ldots ,a_{r},n+1)$ and thus   $\mathbf{v}_{\mathbf{a}}=(a_1-1, a_2-a_1,
\ldots, a_{r}-a_{r-1}, n+1-a_{r})$. The stability condition  of Theorem
\ref{theorem-stablelocus} for a cycle $Z$ is
$\mathbf{v}(Z)=\mathbf{v}_{\mathbf{a}}$ where $\mathbf{v}(Z)$ is the numerical
support of $Z$, i..e. the length of the cycle restricted to the smooth part
$\Delta^{a_{\ell},\circ}_I$ of the components $\Delta^{a_{\ell}}_I$ of
$X[n]_I$. The claim now follows since the entries of  $\mathbf{v}_{\mathbf{a}}$
are exactly equal to the cardinality of the sets $I_{\ell} \intsct[n]$. 

Our aim is to understand the geometry of the GIT quotient $I^n_{X/C} =
\mathbf{H}^n_{\mathrm{GIT}}/G[n]$, in particular the geometry of the special
fibre $(I^n_{X/C})_0$. Since the Hilbert schemes of varieties of dimension
greater than $2$ are, in general, neither irreducible nor equi-dimensional, we
will for the following discussion restrict the fibre dimension to $d\leq 2$. We
first observe that the fibre $(I^n_{X/C})_0$ is naturally stratified. As we have
seen, any  length $r$  subset $I=\{a_1, \ldots, a_{r}\} \subset [n+1]$ defines a
subscheme $X[n]_I$ of $X[n]$ and the stable $n$ cycles supported on $X[n]_I$
give rise to a stratum $(I^n_{X/C})_I$ of $(I^n_{X/C})_0$, and it is the
geometry of these strata and the inclusion relations of their closures which we
want to describe here.

We start with the case where $I=\{a_1\}$ consists of one element. In this case
$I$ defines a partition of $I_{\operatorname{max},0}=I_0 \union I_1$ into two
intervals, namely $I_0=[0, a_1-1]$ and $I_1=[a_1, n+1]$. The
graph $\Gamma_I$ then becomes
\begin{equation*} 
	\bullet \stackrel{1} = \circ \cdots \stackrel{a_1-1}{=} \circ
	\stackrel{a_1}{\to} \circ \stackrel{a_{1}+1}{=} \circ \cdots \circ
	\stackrel{n+1}{=} \bullet
\end{equation*}
and we have no inserted components. The general fibre of $X[n]_I$ has two
components, which are isomorphic to $Y_1$ and $Y_2$ respectively. Stability
condition (\ref{equ:example1}) then tells us that we must have $a_1-1$ points on
$Y_1$ and $n+1-a_1$ points on $Y_2$.  In this case the group $G[n]$ acts freely
on the base $C[n]_I$ of the fibration $X[n]_I \to C[n]_I$. Varying $a_1$ from
$1$ to $n+1$ we thus obtain the strata  $\Hilb^{a_1-1}(Y_1^{\circ}) \times
\Hilb^{n+1-a_1}(Y_2^{\circ})$ in the quotient, where $Y_i^{\circ}$ denotes open
set away from the intersection $D=Y_1 \intsct Y_2$.

Next we consider the other extremal case, namely where $I$ is maximal, i.e.
$I=I_{\operatorname{max}} = [n+1]$. In this case $I_{\operatorname{max},0}$ is
partitioned into $n+2$ subsets $[\{0\}, \{1\}, \ldots, \{n+1\}]$ and the
associated graph is as in (\ref{equ:expandedgraphtwo}). Stability condition
(\ref{equ:example1}) then says that $Z$ must have one point on each of the $n$
inserted components, and consequently none on the components $Y_1$ or $Y_2$.
Recall that the fibres of every inserted component $\Delta^{a}_I$, $a=1, \ldots,
n$ are $\PP^1$-bundles over $D$ and that the smooth locus $\Delta^{a,\circ}_I$
is a  $\Gm$ fibration, given by removing the $0$-section and the
$\infty$-section of the $\PP^1$-bundle. Since stable cycles lie in the smooth
part of $X[n]_I$ it follows that $Z=(P_1, \ldots, P_n) \in \Delta^{1,\circ}_I
\times \ldots \times  \Delta^{n,\circ}_I$ with $P_i \in \Delta^{i,\circ}_I$.
Here the torus $G[n]$ acts trivially on $C_I$ and transitively by multiplication
on the product $\Gm^n$ of the fibres of $\Delta^{1,\circ}_I \times \ldots \times
\Delta^{n,\circ}_I$ over a given point of $D$, see Section
\ref{sec:localcoordinates} for details. Hence the stable cycles in $X[n]_I$
map to an $n$-dimensional stratum $D^n$ in $(I^n_{X/C})_0$. 

Now let us consider the general case $I=\{a_1, \ldots, a_{r}\}$. In this case we
have $r-1$ inserted components $\Delta^{a_{\ell}}_I$, $\ell= 1, \ldots, r-1$. By
the calculations of \ref{sec:localcoordinates} the group $G[n]$ has a
subgroup $G[k]$ which acts trivially on $C[n]_I$ and  transitively by
multiplication on the fibres of  $\Delta^{\circ}_{I_1} \times \ldots \times
\Delta^{\circ}_{I_k}$, whose product, over each point in $D$, is isomorphic to
$\Gm^k$.  In this case we obtain quotients of products of the form
$\Hilb^{a_1-1}(Y_1^{\circ}) \times \Hilb^{a_2-a_1}(\Delta_I^{a_1,\circ}) \times
\ldots \times \Hilb^{a_{r}-a_{r-1}}(\Delta_{I}^{a_{r-1},\circ}) \times
\Hilb^{n+1-a_{r}}(Y_2^{\circ})$ by the group $G[k]$.

The above description provides a natural stratification of $(I^n_{X/C})_0$ into
locally closed subsets $(I^n_{X/C})_I$ indexed by the subsets $I \subset
I_{\operatorname {max},0}$. Moreover, we can also describe how these strata are
related with respect to inclusion, namely
\begin{equation*}
	(I^n_{X/C})_J \subset \overline{(I^n_{X/C})}_I \Leftrightarrow I \subset J.
\end{equation*}
It is natural to encode this information about the strata of $(I^n_{X/C})_0$,
together the incidence relation of their closures, in a dual complex. In our
example the situation is very simple: the $k$-simplices are in $1:1$
correspondence to the subsets $I \subset I_{\operatorname{max}}$ of length $k+1$
and the simplex corresponding to $I$ is contained in the simplex corresponding
to $J$ if and only if  $I \subset J$. Hence the resulting dual complex is the
standard $n$-simplex. The maximal $n$-dimensional cell corresponds to the
smallest stratum, which is isomorphic to $D^n$, and the $0$-vertices correspond
to the maximal-dimensional strata $\Hilb^{a-1}(Y_1^{\circ}) \times
\Hilb^{n+1-a}(Y_2^{\circ})$, $a=1, \ldots, n+1$. 

It is interesting to ask which dual complexes one obtains for more general
degenerations. Given a degeneration graph $\Gamma$ for a degeneration of curves
or surfaces, one can indeed define a suitable $\Delta$-complex, see
\cite{RS-1971}, and describe its combinatorial properties. We are planning to
return to this in a future paper. Similarly, one can ask the same question for
higher $d$-dimensional degenerations. As long as the degree $n \leq 3$, the
Hilbert scheme is irreducible and smooth of dimension $dn$ and one can hope for
an interesting combinatorial object. For arbitrary dimension $d$ and degree $n$
the situation will become much more complicated as the Hilbert schemes, even of
smooth varieties, are in general neither irreducible nor even equi-dimensional. 

Finally, we want to say a few words about the singularities of the total space
$I^n_{X/C}$ and, for the case of simplicity, we will restrict ourselves to
degree $2$ Hilbert schemes, and we will thus allow the dimension $d$ of the
fibres to be arbitrary again. Since $X[2]$ is smooth and all semi-stable points
are stable, the quotient is also smooth at orbits where $G[2]$ acts freely. This
is an easy consequence of Luna's slice theorem, see \cite[Proposition
5.8]{drezet-2004}. In order to understand the set of stable points with
nontrivial stabilizer we look at the various strata $X[2]_I$. Clearly $G[2]$
acts freely at points of $C[2]$, and hence also at points of $X[2]$, where all
$t_{a_i}\neq 0$. The same is true if exactly one $t_{a_i}=0$, i.e. if $|I|=1$.  If
$|I|=3$, then our above discussion shows that all stable points are of the form
$Z=(P_1, P_2) \in \Delta^{1,\circ}_I \times \Delta^{2,\circ}_I$. Moreover, by
Section \ref{sec:localcoordinates} we know that $G[2]$ acts transitively and
freely  by multiplication on each fibre $\Gm^2$ of  $\Delta^{1,\circ}_I \times
\Delta^{2,\circ}_I$ over a given point of $D$.

It thus remains to consider the case where $|I|=2$. We first consider
$I=\{2,3\}$. Then we have the partition $[\{0,1\}, \{2\}, \{3\}]$ and one
inserted component $\Delta^2_I$. By the stability condition (\ref{equ:example1})
every stable cycle $Z$ must contain a point in $\Delta^2_I$. The stabilizer of
points in $C[2]$ with $t_2=t_3=0$ and $t_1 \neq 0$ is the rank $1$ subtorus
$G[1] \subset G[2]$ given by $\sigma_1=1$. However, by Proposition
\ref{prop:localeq} this stabilizer acts on the fibres of $\Delta^2_I$ by
$(u_2:v_2) \mapsto (\sigma_2u_2:v_2)$. Hence $G[2]$ acts freely on the stable
cycles supported on $X[n]_{I}$. A similar argument applies to $I=\{1,2\}$ and it
thus remains to consider $I=\{1,3\}$. In this case we have one inserted
component $\Delta^1_I$ and by the stability condition every stable $2$-cycle $Z$
is supported on it. To study the non-free locus and the action of the stabilizer
we work on the chart $W_2$ from Remark \ref{rem:tau-coord}. where we have the
coordinates $(t_1,t_2,t_3,x_2, \ldots, x_d,u_1/v_1,u_2/v_2)$ and the relation
$t_2= (u_2/v_2)\cdot (v_1/u_1)$. Since stable cycles are supported on the smooth
locus $\Delta^{1,\circ}_I$ we have $u_1/v_1 \neq 0$ and we can thus eliminate
$u_2/v_2$ as a coordinate working with  $(t_1,t_2,t_3, x_2, \ldots ,x_d,
u_1/v_1)$. Here $x_2, \ldots, x_d$ are coordinates on $D$ and the group $G[2]$
acts trivially on these coordinates. For simplicity we  write $U=u_1/v_1$. Thus
the action on our coordinates is given by
\begin{equation*}
	(t_1,t_2,t_3,x_2, \ldots, x_d,U)
	\mapsto
	(\sigma_1t_1, \sigma_2t_2, (\sigma_1\sigma_2)^{-1}t_3, x_2, \ldots ,x_d,  \sigma_1U).
\end{equation*}
Since $t_2 \neq 0$, any element in a nontrivial stabilizer must necessarily
have $\sigma_2=1$. In particular, any nontrivial stabilizer group must lie in
the rank $1$ torus $G_1[2]=\langle \sigma_1 \rangle \subset G[2]$. This group
acts freely on $\Delta^{1,\circ}_I$. Hence the only points in the relative
degree $2$ Hilbert schemes which can possibly have nontrivial stabilizers must
be pairs of points $\{(x_2, \ldots, x_d,U), (x_2, \ldots, x_d,V)\}$ with
$\sigma_1U=V$ and $\sigma_1V=U$. This implies $\sigma_1 = \pm 1$ and $U+V=0$. In
particular, the corresponding point in the degree $2$ Hilbert scheme is
represented by a reduced $2$-cycle and thus, when analysing the action of the
stabilizer group, we can work with the relative second symmetric product rather
than the Hilbert scheme. In order to describe this in coordinates we introduce a
second set of fibre coordinates $(y_2, \ldots ,y_d,V)$. Forming the relative
second symmetric product  means factorizing by the involution which interchanges
$x_i$ and $y_i$ as well as $U$ and $V$. The invariants under this involution are
generated by the linear invariant forms $A_i=x_i+y_i, i=2, \ldots,d$ and
$B=U+V$ as well as the quadratic forms $C_{ij}=(x_i-y_i)(x_j-y_j), 2 \leq i,j
\leq d$, $D_j=(x_j-y_j)(U-V), j= 2, \ldots, d$ and $E=(U-V)^2$. The relations
among these are generated by $C_{ij}E=D_iD_j$. The fixed points lie on  $U+V=0$,
so we can assume that $E\neq 0$ near the fixed points. Thus we can eliminate
$C_{ij}$ and work with the coordinates given by $A_i,B,D_j,E$ where  $i,j=2,
\ldots,d$. On these coordinates the torus $G_1[2]$ acts as 
\begin{equation*}
	(t_1,t_2,t_3,A_i,B,D_j,E)
	\mapsto
	(\sigma_1 t_1,t_2,\sigma_1^{-1} t_3, A_i,\sigma_1B, \sigma_1 D_j, \sigma_1^2E ).
\end{equation*}
From this we see immediately that the differential of involution given by $\{
\pm 1 \} \subset G[2]$ is a diagonal matrix with $3+d-1=d+2$ entries $-1$ and
$d+1$ entries $1$. It then follows from Luna's slice theorem \cite[Theorem
5.4]{drezet-2004} that the quotient $I^n_{X/C}$ has a transversal singularity
along $D$ of type $\frac12(1,\ldots, 1)$ where we have $d+2$ entries $1$. This
singularity is the cone over the Veronese embedding of ${\mathbb P}^{d+1}$
embedded by  the linear system $|{\mathcal O}_{{\mathbb P}^{d+1}}(2)|$. We also
note that in the case $d=1$ we mistakenly labelled this an $A_1$-singularity in
\cite[Example 6.2]{GHH-2015}.


\bibliography{sympdeg}{}

\begin{thebibliography}{MFK94}

\bibitem[AF14]{AF14}
D.~Abramovich and B.~Fantechi.
\newblock Configurations of points on degenerate varieties and properness of
  moduli spaces, 2014.
\newblock arXiv:1406.2166 [math.AG].

\bibitem[AK13]{AK13}
J.~Alper and A.~Kresch.
\newblock Equivariant versal deformations of semistable curves, 2013.
\newblock arXiv:1510.03201 [math.AG].

\bibitem[Dr{\'e}04]{drezet-2004}
J.-M. Dr{\'e}zet.
\newblock Luna's slice theorem and applications.
\newblock In {\em Algebraic group actions and quotients}, pages 39--89. Hindawi
  Publ. Corp., Cairo, 2004.

\bibitem[GHH15]{GHH-2015}
M.~G. Gulbrandsen, L.~H. Halle, and K.~Hulek.
\newblock A relative {H}ilbert--{M}umford criterion.
\newblock {\em Manuscripta Math.}, 148(3-4):283--301, 2015.

\bibitem[GHHZ]{GHHZ-2018}
M.~G. Gulbrandsen, L.~H. Halle, K.~Hulek, and Z.~Zhang.
\newblock The geometry of degenerations of {H}ilbert schemes of points.
\newblock In preparation.

\bibitem[Har77]{hartshorne-1977}
R.~Hartshorne.
\newblock {\em Algebraic geometry}.
\newblock Springer-Verlag, New York-Heidelberg, 1977.
\newblock Graduate Texts in Mathematics, No. 52.

\bibitem[HL10]{HL-2010}
D.~Huybrechts and M.~Lehn.
\newblock {\em The geometry of moduli spaces of sheaves}.
\newblock Cambridge Mathematical Library. Cambridge University Press,
  Cambridge, second edition, 2010.

\bibitem[Li01]{li-2001}
J.~Li.
\newblock Stable morphisms to singular schemes and relative stable morphisms.
\newblock {\em J. Differential Geom.}, 57(3):509--578, 2001.

\bibitem[Li13]{li-2013}
J.~Li.
\newblock Good degenerations of moduli spaces.
\newblock In {\em Handbook of moduli. {V}ol. {II}}, volume~25 of {\em Adv.
  Lect. Math. (ALM)}, pages 299--351. Int. Press, Somerville, MA, 2013.

\bibitem[LW11]{LW-2011}
J.~Li and B.~Wu.
\newblock Good degeneration of quot-schemes and coherent systems, 2011.
\newblock arXiv:1110.0390 [math.AG].

\bibitem[MFK94]{GIT}
D.~Mumford, J.~Fogarty, and F.~Kirwan.
\newblock {\em Geometric invariant theory}, volume~34 of {\em Ergebnisse der
  Mathematik und ihrer Grenzgebiete (2) [Results in Mathematics and Related
  Areas (2)]}.
\newblock Springer-Verlag, Berlin, third edition, 1994.

\bibitem[Nag08]{nagai-2008}
Y.~Nagai.
\newblock On monodromies of a degeneration of irreducible symplectic {K}\"ahler
  manifolds.
\newblock {\em Math. Z.}, 258(2):407--426, 2008.

\bibitem[RS71]{RS-1971}
C.~P. Rourke and B.~J. Sanderson.
\newblock {$\Delta$}-sets. {I}. {H}omotopy theory.
\newblock {\em Quart. J. Math. Oxford Ser. (2)}, 22:321--338, 1971.

\bibitem[Vis89]{vistoli-1989}
A.~Vistoli.
\newblock Intersection theory on algebraic stacks and on their moduli spaces.
\newblock {\em Invent. Math.}, 97(3):613--670, 1989.

\bibitem[Wu07]{wu-2007}
B.~Wu.
\newblock {\em A degeneration formula of {D}onaldson-{T}homas invariants}.
\newblock PhD thesis, Stanford University, 2007.
\newblock ProQuest LLC, Ann Arbor, MI.

\end{thebibliography}
\bibliographystyle{alpha}

\end{document}